\title{Simplicial cohomology of band semigroup algebras}
\author{Y. Choi, F. Gourdeau and M. C. White}
\date{12th April 2010, some corrections 27th June 2011}
\newcommand{\YCedit}[1]{}
\newcommand{\FGedit}[1]{}
\newcommand{\para}[1]{\paragraph{\bf#1.}}
\newcommand{\dt}[1]{\textcolor{Maroon}{\sf #1}}   
\newcommand{\st}{\;:\;}
\newcommand{\defeq}{:=}
\newcommand{\abs}[1]{\vert#1\vert}
\newcommand{\norm}[1]{\left\| #1 \right\|}
\newcommand{\id}{{\sf 1}}  
\newcommand{\ran}{\operatorname{ran}}  
\newcommand{\lto}[1]{{\stackrel{#1}{\longleftarrow}}}
\newcommand{\Cplx}{{\mathbf C}}   
\newcommand{\al}{\alpha}
\newcommand{\lm}{\lambda}
\renewcommand{\aa}{{\bf a}}
\newcommand{\bb}{{\bf b}}
\newcommand{\yy}{{\bf y}}
\newcommand{\cA}{{\mathcal A}} 
\newcommand{\cF}{{\mathcal F}} 
\newcommand{\bbG}{{\mathbb G}}
\renewcommand{\mod}{\operatorname{mod}} 
\newcommand{\tp}{\otimes}
\newcommand{\ptp}{\widehat{\otimes}}  
\newcommand{\filler}[1]{\mathop{\bullet}_{#1}} 
\newcommand{\dif}{\delta}     
\newcommand{\face}{\partial}  
\newcommand{\cyc}{\operatorname{\bf t}} 
  \newcommand{\Ho}[3][]{{\mathcal {#2}^{#1}_{#3}}}
\newcommand{\HH}{{\mathcal H\mathcal H}} 
\newcommand{\CC}{\mathcal C\mathcal C} 
\newcommand{\HC}{\mathcal H\mathcal C} 
\newcommand{\BC}{\mathcal B\mathcal C} 
\newcommand{\ZC}{\mathcal Z\mathcal C} 
\newcommand{\SB}[2]{{\mathcal B}^{#1}({#2},{#2}')} 
\newcommand{\SC}[2]{{\mathcal C}^{#1}({#2},{#2}')} 
\newcommand{\SZ}[2]{{\mathcal Z}^{#1}({#2},{#2}')} 
\renewcommand{\d}{\delta}
\newcommand{\iso}{\cong}
\newcommand{\base}[1]{\operatorname{#1}}
\newcommand{\CR}{\base{CR}}
\newcommand{\ZR}{\base{ZR}}
\newcommand{\BR}{\base{BR}}
\newcommand{\lcu}[1]{\langle#1]}  
\newcommand{\high}{\operatorname{ht}}  
\newcommand{\sR}{{\mathsf R}}
\newcommand{\err}{\operatorname{Err}}
\newcommand{\desc}{\operatorname{desc}}
\newcommand{\lin}{\operatorname{lin}}
\newcommand{\SD}{\operatorname{SD}}
\newcommand{\DS}{\operatorname{DS}}
\newcommand{\et}{{\bf x}}  
\newcommand{\chain}{x_1 \otimes \cdots \otimes x_{n+1}}
\newcommand{\subchain}[2]{x_{#1} \otimes \cdots \otimes x_{#2}}
\newcommand{\achain}{a_1 \otimes \cdots \otimes a_{n+1}}
\newcommand{\asubchain}[2]{a_{#1} \otimes \cdots \otimes a_{#2}}
\newcommand{\lbchain}{w_1 \otimes \cdots \otimes w_j}
\newcommand{\lbsubchain}[2]{w_{#1} \otimes \cdots \otimes w_{#2}}
\newcounter{pulse}
\numberwithin{pulse}{section}
\numberwithin{equation}{section}
\newtheorem{Proposition}[pulse]{Proposition}
  \newtheorem{prop}[pulse]{Proposition}
\newtheorem{Theorem}[pulse]{Theorem}
  \newtheorem{thm}[pulse]{Theorem}
\newtheorem{Lemma}[pulse]{Lemma}
  \newtheorem{lem}[pulse]{Lemma}
  \newtheorem{cor}[pulse]{Corollary}
\theoremstyle{definition}
  \newtheorem{Definition}[pulse]{Definition}
  \newtheorem{dfn}[pulse]{Definition}
  \newtheorem{notn}[pulse]{Notation}
  \newtheorem{Example}[pulse]{Example}
\theoremstyle{remark}
  \newtheorem{Remark}[pulse]{Remark}
  \newtheorem{rem}[pulse]{Remark}
\newenvironment{YCnum}{%
\begin{enumerate}
\renewcommand{\labelenumi}{{\rm(\roman{enumi})}}
\renewcommand{\theenumi}{{\rm(\roman{enumi})}}
}{\end{enumerate}\ignorespacesafterend}
\begin{document}
\begin{abstract}
We establish simplicial triviality of the convolution algebra $\ell^1(S)$, where $S$ is a band semigroup. This generalizes results of the first author~\cite{Ch1,YC_SA}.
To do so, we show that the cyclic cohomology of this algebra vanishes in all odd degrees, and is isomorphic in even degrees to the space of continuous traces on $\ell^1(S)$.
Crucial to our approach is the use of the structure semilattice of~$S$, and the associated grading of $S$, together with an inductive normalization procedure in cyclic cohomology; the latter technique appears to be new, and its underlying strategy may be applicable to other convolution algebras of interest.

\bigskip\noindent
MSC 2010: Primary 16E40, 43A20
\end{abstract}
\maketitle

\begin{section}{Introduction}
Computing Hochschild cohomology of Banach algebras has remained a difficult task, even when restricted to the class of $\ell^1$-convolution algebras of semigroups: see \cite{BoDu,DaDu} for earlier work on various examples, albeit only in low dimensions.
Previous work of the first author~\cite{Ch1,YC_SA} showed that the simplicial cohomology of the semigroup algebra $\ell^1(S)$ vanishes when $S$ is a \emph{normal band}; however, the techniques were unable to handle the case of general band semigroups.
(We note that bands comprise a rich and interesting class of semigroups: particular kinds of band have been studied both in abstract semigroup theory, and also in operator-theoretic settings~\cite{LMMR_JOT98, LMMR_JOT01}.)

In this paper we calculate all the cyclic and simplicial cohomology groups of $\ell^1(S)$ where $S$ is an \emph{arbitrary} band semigroup. More precisely, we shall show the following:
\begin{itemize}
\item[--] the cyclic cohomology of $\ell^1(S)$ is isomorphic in even degrees to the space of continuous traces on $\ell^1(S)$, and vanishes in odd degrees (Theorem~\ref{t:headline});
\item[--] the simplicial cohomology of $\ell^1(S)$ vanishes in all strictly positive degrees (Theorem~\ref{t:headline2}).
\end{itemize}
The techniques used in establishing these results resemble those in earlier work of the second and third authors \cite{GJW}, in that one performs explicit calculations with cyclic cochains, and then uses the Connes-Tzygan long exact sequence to calculate the simplicial cohomology. As in that paper, the decision to work with cyclic cohomology really is forced upon us by the nature of our construction (see Corollary~\ref{c:now-we-need-cyclic} below), and is not merely incidental.
%

Some of our results appear to generalize to the setting of Banach algebras which are $\ell^1$-graded over a semilattice.
In particular, it seems that similar calculations would provide an alternative approach to some of the first author's existing results for Clifford semigroups in~\cite{YC_SA}.
However, we shall focus throughout on the case of band semigroup algebras, to keep the exposition reasonably self-contained.

One approach which one might be tempted to adopt, in order to prove that band
semigroup algebras have trivial cyclic cohomology, is to exhaust the
band by finitely generated bands and cobound the cocycle on increasingly
large sets.

This is even more tempting when one recalls that finitely generated
bands are finite,~\cite[Theorem IV.4.9]{Ho} (or see \cite{BL_finbands} for a short, direct proof).
However, one encounters problems with this approach.
It is difficult to obtain uniform control of the norms of the coboundaries
as we take larger and larger generating sets for these bands.
This is true even in the commutative case, which corresponds to
the setting of~\cite{Ch1}.
Another feature is that finite band algebras are, in general, neither semisimple nor amenable, which makes their trivial simplicial
cohomology surprising.

It should nevertheless be noted that, by specializing the present arguments to the case of a \emph{semilattice}~$L$, one obtains a direct calculation of the cyclic cohomology of $\ell^1(L)$. Previously, this was only known by applying the Connes-Tzygan exact sequence and using the main result of \cite{Ch1}.
Moreover, in order to apply the Connes-Tzygan exact sequence, one first has to show that certain obstruction groups vanish -- and the only previous proof that these obstructions vanish relied indirectly on other results from~\cite{Ch1}. Thus the methods of the present paper give a much more accessible proof that $\ell^1(L)$ has the same cyclic cohomology as the ground field.

\begin{rem}
A feature which may be of wider interest is that, rather than constructing a splitting homotopy directly on the cyclic cochain complex, we construct maps which split ``modulo terms of lower order'' in a particular filtration,
and then employ an iterative procedure to move progressively further down the filtration.
Some of these arguments could be cast in terms of a more general theory of cohomology of filtered complexes; however, this seems to bring little extra advantage or clarity for the present problem, and so we shall carry out our iterative reduction in a hands-on fashion.
\end{rem}
\end{section}

\begin{section}{Notation and preliminaries}

\subsection{Cohomology}
Since this paper is only concerned with simplicial and cyclic cohomology, rather than Hochschild cohomology with more general coefficients, we shall present a fairly minimal set of definitions that is enough for our purposes.
Our terminology is that of~\cite{GJW}, but with some small differences of notation.

Let $\cA$ be a Banach algebra and regard $\cA'$\/, the dual space of $\cA$, as a Banach $\cA$-bimodule in the usual way.
As in \cite[\S1]{GJW}, for $n\geq 0$, $\SC{n}{\cA}$ denotes the space of \dt{$n$-cochains}, $\SZ{n}{\cA}$ the subspace of \dt{$n$-cocycles}, and $\SB{n}{\cA}\subseteq\SZ{n}{\cA}$ the subspace of \dt{$n$-coboundaries}.
Note that by convention, $\SC{0}{\cA}=\cA'$ and
$\SC{n}{\cA}=0$ for negative $n$. Our notation for the corresponding cohomology groups differs from that of~\cite{GJW}: we shall write
$\HH^n(\cA)$ for the quotient space $\SZ{n}{\cA}/\SB{n}{\cA}$\/. This is the $n$th \dt{simplicial cohomology group} of~$\cA$.

We need to specify some notation for the \dt{Hochschild coboundary operator} $\dif^n:\SC{n}{\cA}\to\SC{n+1}{\cA}$. Recall (cf.~\cite{GJW}) that an $n$-cochain is a bounded $n$-linear map $T:\cA^n\to \cA'$, and that the $(n+1)$-cochain $\dif^nT$ is defined by
\begin{eqnarray*}
(\d^n T)(a_1,\ldots,a_{n+1})(a_{n+2}) &=& T(a_2, a_3,  \ldots, a_{n+1})(a_{n+2}a_1) \\
& & + \sum_{j=1}^{n} (-1)^j T(a_1, a_2, \ldots, a_j a_{j+1}, \ldots, a_{n+1})(a_{n+2}) \\
& & { } + (-1)^{n+1}  T(a_1, \ldots, a_n)(a_{n+1}a_{n+2})
 \end{eqnarray*}
  where $a_1,\ldots,a_{n+2} \in \cA$.
We shall usually omit the superscript and write $\dif$ for $\dif^n$.

For each $n$, elements of $\SC{n}{\cA}$ may be regarded as bounded linear functionals on the space $\Ho{C}{n}(\cA)\defeq A^{\ptp n+1}$, the $n+1$-fold completed projective tensor product of $\cA$\/; if we do this, then the coboundary operator $\delta:\SC{n}{\cA}\to\SC{n+1}{\cA}$ is clearly the adjoint of the operator $d:\Ho{C}{n+1}(\cA)\to\Ho{C}{n}(\cA)$ given by
 \begin{eqnarray*}
d (a_1\tp \cdots\tp a_{n+2}) & = &
 a_2\tp \cdots\tp a_{n+1} \tp a_{n+2}a_1 \\
& & + \sum_{j=1}^{n+1} (-1)^j a_1\tp \cdots\tp a_ja_{j+1}\tp \cdots\tp a_{n+2}
 \end{eqnarray*}
for $a_1,\dots,a_{n+2}\in\cA$\/.
This point of view will be more convenient when $\cA=\ell^1(S)$ for a semigroup~$S$\/. For, since there is a well-known isometric isomorphism of Banach spaces
\[ \ell^1(I)\ptp\ell^1(J)\iso \ell^1(I\times J) \quad\text{for any index sets $I$ and $J$\/,} \]
we shall in what follows identify $\ell^1(S)^{\ptp n}$ with $\ell^1(S^n)$.

Simplicial cohomology is closely linked
to \dt{cyclic cohomology}, which we now introduce.
Denote by $\cyc$ the \dt{signed cyclic shift operator} on the simplicial
chain complex:
\begin{equation}\label{eq:dfn-cyc-shift}
\cyc(\achain) = (-1)^n (a_{n+1}\otimes\asubchain{1}{n}).
\end{equation}
By abuse of notation, we also write $\cyc$ for the adjoint
operator on the simplicial \emph{cochain} complex.
The $n$-cochain $T$ (in $\SC{n}{\cA}$) is called \dt{cyclic} if
$\cyc T = T$ and the linear space of all cyclic $n$-cochains is denoted by~$\CC^n(\cA)$.

It is well known that the cyclic cochains $\CC^n(\cA)$ form a subcomplex of
$\SC{n}{\cA}$, that is $\d\left(\CC^n(\cA)\right) \subseteq \CC^{n+1}(\cA)$, and
this allows one to define cyclic versions of the spaces defined above, denoted here by $\ZC^n(\cA)$, $\BC^n (\cA)$ and $\HC^n (\cA)$.
Under certain conditions on the algebra $\cA$ (see \cite{He1}), the cyclic and simplicial cohomology groups are connected via the
Connes-Tzygan long exact sequence
\begin{equation}\label{eq:CTseq}
\cdots\to \HH^n(\cA) \xrightarrow{B} \HC^{n-1}(\cA) \xrightarrow{S} \HC^{n+1}(\cA) \xrightarrow{I} \HH^{n+1}(\cA) \to \cdots
\end{equation}
where the maps $B$, $S$ and $I$ all behave naturally with respect to algebra homomorphisms. (Although we use $S$ to denote both the shift map in cyclic cohomology and a band semigroup, this should not lead to any confusion.)
The reader is referred to~\cite{He1} for more details.

\medskip 
We now introduce some definitions and notation which will be useful in our work.

\begin{dfn}[Cyclic cocycles arising from traces]
Let $A$ be a Banach algebra. Given $\psi\in A'$ and $n\geq 0$\/, let $\psi^{(n)}\in \SC{n}{A}$ be the cochain defined by
\[ \psi^{(n)}(a_1,\ldots, a_n)(a_{n+1}) \defeq \psi(a_1\dotsb a_{n+1})\,. \]
\end{dfn}

\begin{lem}
If $\tau$ is a continuous trace on $A$\/, then $\tau^{(2n)}$ is a cyclic cocycle.
\end{lem}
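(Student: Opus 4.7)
The plan is to verify the two defining conditions separately: that $\tau^{(2n)}$ is cyclic, and that it is a Hochschild cocycle. Both reductions come down to expanding the definition of $\tau^{(m)}$ as a product, then applying the trace identity $\tau(xy)=\tau(yx)$, so no heavy machinery is needed.

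First I would check cyclicity. Unwinding the adjoint in \eqref{eq:dfn-cyc-shift}, we have
\[
(\cyc\tau^{(2n)})(a_1,\ldots,a_{2n})(a_{2n+1})=(-1)^{2n}\tau^{(2n)}(a_{2n+1},a_1,\ldots,a_{2n-1})(a_{2n})=\tau(a_{2n+1}a_1a_2\cdots a_{2n}),
\]
and a single application of the trace property rewrites this as $\tau(a_1a_2\cdots a_{2n+1})=\tau^{(2n)}(a_1,\ldots,a_{2n})(a_{2n+1})$. The key point is that the sign $(-1)^{2n}$ equals $+1$ for even degree, which is precisely why the statement is restricted to even $n$.

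Next I would compute $\delta\tau^{(2n)}$ directly from the formula for the Hochschild coboundary. Each of the $2n+2$ resulting terms is of the shape $\tau$ applied to a product of $a_1,\ldots,a_{2n+2}$ in some order; the trace identity lets me cyclically rotate all of them back to $\tau(a_1a_2\cdots a_{2n+2})$. The remaining bookkeeping is purely combinatorial: the first term contributes $+1$ (after cyclic rearrangement), the middle sum contributes $\sum_{j=1}^{2n}(-1)^j$, and the last term contributes $(-1)^{2n+1}=-1$. Using that $2n$ is even, $\sum_{j=1}^{2n}(-1)^j=0$, so the three contributions cancel as $1+0-1=0$. Thus $\delta\tau^{(2n)}=0$.

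There is no real obstacle here; the only subtlety is the parity bookkeeping, which explains why the lemma is stated for $\tau^{(2n)}$ rather than for $\tau^{(n)}$ with arbitrary $n$. For odd $n$ the sign $(-1)^n$ in the cyclic shift would reverse the trace identity rather than confirm it, and the alternating sum in the coboundary calculation would fail to vanish, so both checks genuinely use evenness in the same essential way.
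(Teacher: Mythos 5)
Your calculation is correct: the sign $(-1)^{2n}$ in the cyclic shift together with one application of the trace identity gives cyclicity, and the coboundary terms collapse to $1+\sum_{j=1}^{2n}(-1)^j-1=0$, exactly the direct verification the paper alludes to when it omits the proof. This is essentially the same (intended) argument, so nothing further is needed.
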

This is easily verified by a direct calculation, and we omit the proof. 

\begin{Definition}\label{dfn:cyc-equiv}
Two chains $x, y \in \Ho{C}{n}(\cA)$ are \dt{cyclically equivalent} if
\[ x - y \in (I-\cyc)\Ho{C}{n}(\cA).\]
\end{Definition}

\begin{notn}\label{dfn:d_c}
Let $\et=\chain\in \Ho{C}{n}(\cA)$ be an elementary tensor, and suppose we group terms in the tensor together as $\et=\lbchain$, where $j\ge 2$.
We then denote by $d_c(w_l)$ the restriction of $d$ to
$w_l$ when seen as a part of $\et$, meaning that
\[ d_c(w_l)=\sum_{i=\beta_k}^{\beta_k +\alpha_k-2}
(-1)^i x_{\beta_k} \otimes \cdots \otimes x_i \cdot x_{i+1}\otimes
\cdots \otimes x_{\beta_k +\alpha_k -1}\,. \]
 where
$\alpha_l$ is the length of the subtensor~$w_l$\/, and $\beta_l$ the rank of its first element. 
If $w_l$ has length one, i.e.~$\alpha_l=1$, then we define $d_c(w_l)$ to be zero.
\end{notn}

Note that the introduction of $d_c$ is a notational device and does
not define a map on subtensors as the signs are tributary to the
position of this subtensor in the tensor. With this notation, we can
write
\begin{equation}\label{eq:dw1}
d(\lbchain) =
 \left\{ \begin{aligned}
     & \subchain {2}{\alpha_1} \otimes\lbsubchain 2{j}\cdot x_1\\
   + &\sum_{k=1}^{j-1} (-1)^{\beta_{k+1}-1} \lbsubchain 1{k} \cdot \lbsubchain {k+1}{j}\\
   + &\sum_{k=1}^{j}  w_1 \otimes \cdots \otimes d_c(w_k) \otimes \cdots \otimes w_j.
\end{aligned}\right.
\end{equation}

\begin{notn}
In later sections, many of the calculations involve elementary tensors in $\cA^{\ptp n+1}$, of the form $x_1\tp \cdots\tp x_{n+1}$, and their images under certain maps, which have a form like
\begin{equation}\label{eq:longhand}
\subchain{1}{i-1} \tp f(x_i,x_{i+1})\tp g(x_i,x_{i+1})\tp \subchain{i+2}{n+1}
\end{equation}
for certain functions $f,g : \cA\times\cA\to \cA$.
As a notational shorthand, we will often denote such an expression in the abbreviated form
\[ \filler{i-1} \tp  f(x_i,x_{i+1})\tp g(x_i,x_{i+1}) \tp \filler{n-i} \,.\]
\end{notn}

\subsection{Band semigroups}

\begin{Definition}
A semigroup $S$ formed only of idempotents is a \dt{band semigroup}.
\end{Definition}

\FGedit{Definition of normal band needed here or in intro as normal bands are mentioned in the intro. We wouldn't add other examples.}

Of particular importance are rectangular bands.
\begin{Definition}
A \dt{rectangular band} is a semigroup in which the identity $a=aba$ always holds.
\end{Definition}
Note that in any rectangular band, the identity
\[ abc = (aca)bc = a(cabc) = ac \]
holds for arbitrary elements $a$, $b$ and~$c$.
This is particularly clear if one takes the
following description of rectangular bands \cite[Theorem~1.1.3]{Ho}.

\begin{Theorem} Let $S$ be a semigroup. The the following conditions
are equivalent:
\begin{YCnum}
\item $S$ is a rectangular band;
\item $S$ is isomorphic to a semigroup of the form $A \times B$,
where $A$ and $B$ are non-empty sets, and where the multiplication
is given by
\[ (a_1, b_1)(a_2, b_2)= (a_1, b_2)  \qquad\text{ where $a_1,a_2\in A$ and $b_1,b_2\in B$\/.} \]
\end{YCnum}
\end{Theorem}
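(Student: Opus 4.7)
The plan is to prove the two implications separately. For $(ii) \Rightarrow (i)$, I would simply verify by one-line calculations that the set $A \times B$ with product $(a_1,b_1)(a_2,b_2) = (a_1,b_2)$ is associative and satisfies $x = xyx$ for all~$x,y$.

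For the main direction $(i) \Rightarrow (ii)$, the starting point is the identity $abc = ac$ already derived in the paper from $a = aba$. I would note in passing that $S$ is automatically a band: setting $b = c = a$ in $abc = ac$ gives $a^3 = a^2$, while setting $b = a$ in $a = aba$ gives $a = a^3$, hence $a = a^2$.

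Next I would introduce two equivalence relations on $S$, namely $s \sim_L s'$ iff $st = s't$ for all $t \in S$, and $s \sim_R s'$ iff $ts = ts'$ for all $t \in S$. Let $A = S/{\sim_L}$ and $B = S/{\sim_R}$, and define $\phi : S \to A \times B$ by $\phi(s) = ([s]_L, [s]_R)$. Applying the identity $abc = ac$ twice gives $(st)u = su$ and $u(st) = ut$ for every $u \in S$, so $st \sim_L s$ and $st \sim_R t$. Therefore
\[ \phi(st) = ([st]_L, [st]_R) = ([s]_L, [t]_R) = \phi(s)\phi(t), \]
showing that $\phi$ is a semigroup homomorphism into $A \times B$ with the rectangular-band multiplication. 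Surjectivity follows at once: given $([a]_L, [b]_R)$, the homomorphism property yields $\phi(ab) = ([a]_L, [b]_R)$.

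The only subtle step is injectivity, which I expect to be the main (minor) obstacle. Suppose $\phi(s) = \phi(s')$. Using $s \sim_R s'$ with $t = s$ gives $s^2 = ss'$, and idempotency yields $s = ss'$. Using $s \sim_L s'$ with $t = s'$ gives $ss' = (s')^2 = s'$. Chaining these, $s = ss' = s'$, which completes the argument. The delicate point is that one must combine both equivalences with the idempotent law in the right order; beyond that the proof is purely formal.
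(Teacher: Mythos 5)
Your argument is correct, and it is worth noting that the paper itself gives no proof of this statement: it is quoted from Howie \cite[Theorem~1.1.3]{Ho}, so there is no in-paper argument to compare against. Both directions of your proof check out: the verification of (ii)$\Rightarrow$(i) is indeed routine; the deduction that $S$ is a band ($a=a^3=a^2$, using $abc=ac$ with $b=c=a$) is valid because $abc=ac$ was derived from $a=aba$ alone; the relations $st\sim_L s$ and $st\sim_R t$ follow from $abc=ac$; and your injectivity chain $s=s^2=ss'=(s')^2=s'$ is correct, using idempotency exactly where needed. Your route differs slightly from the standard textbook proof: Howie (and the construction implicit in the paper's footnote, $\sigma(x)=xe\otimes ex$ for a fixed $e\in R$) fixes one element $a\in S$ and shows that $x\mapsto (xa,ax)$ is an isomorphism onto $Sa\times aS$ with the rectangular multiplication, whereas you quotient by the left- and right-translation equivalences (in effect Green's $\mathcal{R}$- and $\mathcal{L}$-relations) and avoid choosing a base point. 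The two are essentially equivalent in content; the fixed-element version gives a concrete copy of $A$ and $B$ inside $S$, while yours is choice-free and makes the homomorphism property completely formal. The only implicit convention you rely on is that $S$ is non-empty (so that $A$ and $B$ are non-empty), which is the standing assumption for semigroups in \cite{Ho}.
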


A commutative band semigroup is called a \dt{semilattice}, and carries a natural and useful partial order defined by $\alpha\preceq\beta \Leftrightarrow \alpha\beta=\alpha$. Semilattices are important in the study of general bands, because of the following structure theorem.

\begin{Theorem}[{\cite[Theorem 4.4.5]{Ho}}]
Any band semigroup $S$ can be represented as a disjoint union
$\coprod_{\alpha\in L} R_\alpha$, where $L$ is a semilattice, each
$R_\alpha$ is a rectangular band given by $A_\alpha \times
B_\alpha$, the left and right index sets, and the following
properties are satisfied:
\begin{YCnum}
  \item $R_\alpha R_\beta\subseteq R_{\alpha\beta}$ for all
  $\alpha,\beta\in L$\/.
  \item For $x=(a_1, b_1) \in R_\alpha$ and $y = (a_2, b_2) \in R_\beta$
 with $\alpha \preceq \beta$, then $xy$ and $y$
  have the same right index (i.e.~$xy = ( \cdot , b_2)$) while $yx$ and $y$ have the same left index (i.e.~$yx = (a_2, \cdot)$).
  \item\label{item:assoc} The product is associative.
\end{YCnum}
\end{Theorem}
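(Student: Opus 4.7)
The plan is to realize the decomposition as the quotient of $S$ by Green's $\mathcal{D}$-relation, which on a band coincides with the relation
\[ x \sim y \iff xyx = x \text{ and } yxy = y.\]
I would first verify that $\sim$ is an equivalence relation on $S$: reflexivity follows from $x \cdot x \cdot x = x$ (by the band property), symmetry from the symmetric form of the defining conditions, and transitivity from a short idempotency calculation.

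The central technical claim, essentially Clifford's theorem, is that $\sim$ is a two-sided congruence on~$S$: given $a \sim b$ and any $z \in S$, one has $az \sim bz$ and $za \sim zb$. I would prove this by direct manipulation, exploiting the only identities available in a general band (namely $u^2 = u$ and its consequences such as $(uv)^2 = uv$) together with the defining relations $aba = a$ and $bab = b$ to reduce, for instance, the product $(az)(bz)(az)$ to~$az$.

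Granted the congruence property, the equivalence classes $R_\alpha$ give a disjoint cover of $S$ with $R_\alpha R_\beta \subseteq R_{\alpha\beta}$ for a well-defined operation on the set $L \defeq S/{\sim}$, which is precisely property~(i). Each class $R_\alpha$ satisfies $aba = a$ for all $a, b \in R_\alpha$ by the very definition of~$\sim$, so is a rectangular band, and the preceding theorem supplies coordinates $R_\alpha \iso A_\alpha \times B_\alpha$. The quotient $L$ inherits idempotency from~$S$ and is commutative because $(xy)(yx)(xy) = xyyxxy = xyxy = (xy)^2 = xy$ (and symmetrically for $(yx)(xy)(yx)$), so $L$ is a semilattice. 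Property~(iii) is automatic, as the induced product on the union is just the original associative product on~$S$.

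For property~(ii), take $x \in R_\alpha$ and $y \in R_\beta$ with $\alpha \preceq \beta$, so $xy \in R_{\alpha\beta} = R_\alpha$. I would verify by direct calculation in the rectangular-band coordinates that the right coordinate of~$xy$ is determined solely by~$y$ (via a natural map $B_\beta \to B_\alpha$ coming from the band structure) and that the left coordinate is that of~$x$; the symmetric calculation handles~$yx$. The principal obstacles throughout are the congruence step and the coordinate bookkeeping in~(ii); the remaining steps are routine verifications.
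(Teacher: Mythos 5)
Your overall strategy is sound and is, in substance, the proof in Howie that the paper cites: the relation $x\sim y \iff xyx=x,\ yxy=y$ is Green's relation $\mathcal{D}$ on a band, it is a congruence, the quotient is a semilattice, and the classes are rectangular bands. Two small caveats on that part: transitivity of $\sim$ is most painlessly obtained by observing that $x\sim y$ forces $x\,\mathcal{R}\,xy\,\mathcal{L}\,y$, so $\sim$ coincides with $\mathcal{D}=\mathcal{R}\circ\mathcal{L}$, rather than by a bare-hands calculation; and closure of each class under multiplication (needed before you may call $R_\alpha$ a rectangular band) comes from the congruence property, which is indeed the real work of the theorem rather than a routine manipulation, though your plan for it is correct and feasible.

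The genuine gap is in your treatment of (ii). You propose to show that the right coordinate of $xy$ is ``determined solely by $y$, via a natural map $B_\beta\to B_\alpha$''. No such map exists for a general band, and the claim is false: coordinate-compatible maps between components are exactly the extra structure enjoyed by \emph{normal} bands (strong semilattices of rectangular bands), which this paper explicitly does not assume. Concretely, in the free band on $\{p,q,r\}$ let $\alpha$ be the class of words with content $\{p,q,r\}$ and $\beta=\{r\}$; take $x=pqr$, $x'=qpr\in R_\alpha$ and $y=r\in R_\beta$, so $\alpha\preceq\beta$. Then $xy=pqr$ and $x'y=qpr$, and a routine free-band computation shows $pqr\cdot qpr\neq pqr$, so these two products lie in different $\mathcal{L}$-classes of $R_\alpha$, i.e.\ have different right indices even though $y$ (hence $b_2$) is the same. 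Note also that, read literally, (ii) compares the right index of $xy\in R_\alpha$ with $b_2\in B_\beta$, which lie in different index sets; the statement that is both meaningful and true --- and the only one the paper actually uses later, for the identity $\lcu{xy}=\lcu{x}$ --- is that the product preserves the relevant index of the element in the \emph{lower} component: if $u\in R_\gamma$, $v\in R_\delta$ and $\gamma\preceq\delta$, then $uv$ has the same left index as $u$, and $vu$ the same right index as $u$. This you can prove with what you already have: by (i), $vu\in R_\gamma$, so rectangularity of $R_\gamma$ gives $uvu=u(vu)u=u$; together with $u\cdot uv=uv$ this says $u$ and $uv$ are $\mathcal{R}$-equivalent, hence share their left index, and dually for $vu$. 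Replace the ``natural map $B_\beta\to B_\alpha$'' argument by this; as written, that step would fail.
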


Note that condition~\ref{item:assoc} is
needed to ensure that such a construction gives a band semigroup.

\begin{Example}[Normal bands]
A band $S$ is said to be a \dt{normal band} if $xaby=xbay$ for all $a,b,x,y\in S$. In this case, the structure theorem can be sharpened significantly; not only do we get a decomposition $S=\coprod_{\al\in L} R_\al$ into rectangular bands, but this decomposition turns out to exhibit $S$ as a ``strong semilattice of rectangular bands'': see \cite[Prop.~4.6.14]{Ho} for the proof and relevant definitions. In \cite{YC_SA} this stronger decomposition theorem was used to calculate the simplicial cohomology of $\ell^1(S)$; in the present, more general case, new techniques are needed.
\end{Example}

\para{Left coherent units}
Although bands do not in general have units, we will
define \dt{left coherent units} for each element, as follows. Given $S$, for each
rectangular band $R_\alpha$, fix an element $y_\alpha\in R_\alpha$ and
define $\lcu{x} = xy_\alpha$ for each $x\in R_\alpha$. Then the function $\lcu{\,\cdot\,}:S\to S$
has the following properties:\label{temporary-marker}
\begin{itemize}
\item[--] for each $\al\in L$\/, $\lcu{R_\al}\subseteq R_\al$\/;
\item[--] for each $\al\in L$ and each $x\in R_\al$\/, $\lcu{x}x=x$\/;
\item[--] for each $\al,\beta\in L$ such that
$\al\preceq\beta$\/, and each $x\in R_\alpha$ and $y\in R_\beta$\/,
$\lcu{xy}=\lcu{x}$\/.
\end{itemize}

\begin{notn}
As is often done, if $x\in S$ we denote the point mass
at~$x$, as an element of $\ell^1(S)$, by $x$ itself. This should not cause
confusion and follows the notation used in general. Throughout, we
write $\et=\chain$ for an elementary tensor in which each $x_i$
is a point mass at $x_i\in S$.
%
Henceforth, when we speak of an elementary tensor $\et\in \Ho{C}{n}(\cA)$, we shall always mean one of this restricted form.
\end{notn}

To show that one can cobound any $\phi\in \ZC^n(\cA)$,
 it suffices to show that one can do it on these elementary tensors
 (with a uniform bound).
Our strategy will be to proceed by steps, expanding the set of
elementary tensors on which one can cobound at each step.

To do so,
at each step one defines $s: \CC_{n-1}(\cA) \to \CC_n(\cA)$ in such a
way that: denoting by $E_0$ the set of elementary tensors on which
one has cobounded $\phi$ at the previous step, and by $E_1$ the set
on which one wishes to cobound, then $(ds+sd)(E_0)\subset (E_0)$ and
$\et - (ds+sd)(\et) \in E_0$ for $\et \in E_1$. This gives us the
result: for, given $\phi\in \ZC^n(\cA)$ such that
$\phi_0=\phi-\delta(\psi)$ vanishes on $E_0$, defining $\psi_1$ on
$E_1$ by $\psi_1(\et)=\phi_0(s(\et))$ (for $\et \in \CC_{n-1}(\cA)$)
gives
\begin{eqnarray*}
(\phi_0 - \delta\psi_1)(\et)&=&\phi_0(\et) - \psi_1(d\et)\\
    &=&\phi_0(\et) - \phi_0(sd\et)\\
    &=&\phi_0(\et-(ds+sd)\et),
\end{eqnarray*}
which still vanishes on $E_0$ and now vanishes on $E_1$. Note that
if $E_0 \subset E_1$, then it  is sufficient to verify that
 $\et -(ds+sd)(\et) \in E_0$ for $\et \in E_1$.

\end{section}

\begin{section}{A first normalization step}
\begin{subsection}{Cobounding cyclically with norm control}\label{ss:using-YCnote}
Our first observation is that, if $R$ is a rectangular band, then $\ell^1(R)$ is a $1$-biprojective Banach algebra: that is, there exists\footnotemark\ an $\ell^1(R)$-bimodule map $\sigma:\ell^1(R)\to \ell^1(R)\ptp\ell^1(R)$, which has norm $1$ and which is right inverse to the product map $\ell^1(R)\ptp\ell^1(R)\to\ell^1(R)$.
\footnotetext{In fact, we can take $\sigma(x)=xe\tp ex$ where $e$ is a fixed element in $R$, although we do not need to know this for the arguments which follow.}
(To see how this definition relates to the original homological one, see \cite[Ch.~IV, \S5]{He0}.)

As observed in \cite[Lemma 7.5]{YC_SA}, one can use $\sigma$ to construct a splitting homotopy for the simplicial chain complex, and thus show directly that $\HH^n(\ell^1(R))=0$ for all $n\geq 1$. The corresponding result for cyclic cohomology is more complicated, but can nevertheless be deduced using the Connes-Tzygan long exact sequence for Banach algebras, \cite{He1}.

Now consider a general band $S$ which decomposes into rectangular band components as $S=\coprod_\al R_\al$.
We wish to use this decomposition to reduce our cohomology problem to the case of rectangular bands. Although we are ultimately interested in simplicial cohomology, it seems necessary at certain points in our reduction technique to be working with cyclic cochains. Thus, we shall need to consider the \emph{cyclic} cohomology of $\ell^1(R_\alpha)$, for each $\alpha\in L$.
Since we need to deal with all the $R_\alpha$ simultaneously, it no longer suffices to appeal to~\cite[Theorem 25]{He1}; a more precise version of that result is needed, as follows.

\begin{thm}\label{t:unifCC-biflat}
Let $A$ be a biflat Banach algebra, with biflatness constant $K\geq 1$\/.
Let $m\geq 0$\/.
\begin{YCnum}
\item For every $\psi\in \ZC^{2m+1}(A)$ there exists $\chi\in \CC^{2m}(A)$ such that $\psi=\dif\chi$\/; moreover, $\chi$ may be chosen to satisfy the bound
\[ \norm{\chi}\leq 2(m+1)^3 K^{4m}\norm{\psi}.\]
\item For every $\psi\in \ZC^{2m+2}(A)$ there exists $\chi\in \CC^{2m+1}(A)$
 and $\tau\in \CC^0(A)$
 such that
$\psi=\tau^{(2m+2)}+\dif\chi$\/;
moreover, $\tau$ and $\chi$ may be chosen to satisfy the bounds
\[ \norm{\tau}\leq K^{2(m+1)}\norm{\psi},\quad \norm{\chi}\leq 2(m+1)^3 K^{2(2m+1)}\norm{\psi}.\]
\end{YCnum}
\end{thm}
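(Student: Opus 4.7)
My plan is to adapt the proof of the non-quantitative result \cite[Theorem~25]{He1}, tracking norm dependencies at every stage and proceeding by induction on~$m$. The two main ingredients are: a quantitative splitting of the Hochschild cochain complex coming from biflatness, and a cochain-level analogue of Connes' periodicity operator that reduces the degree of a cyclic cocycle by two at a controlled norm cost.

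For the first ingredient, I recall from \cite[Ch.~IV]{He0} that a biflat Banach algebra with constant~$K$ admits a family of linear maps $h_n : \SC{n}{A} \to \SC{n-1}{A}$, $n\ge 1$, satisfying $\dif h_n + h_{n+1}\dif = \operatorname{id}$, with operator norms bounded by a power of~$K$ (roughly $K^{2n}$), obtained by pulling the bar-resolution contracting homotopy back through an $n$-fold iterate of the biflat diagonal. Given $\psi\in\ZC^n(A)$ with $n \ge 3$, setting $\chi_0 \defeq h_n(\psi)$ produces a Hochschild coboundary witness with $\dif\chi_0 = \psi$, but $\chi_0$ is in general not cyclic. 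I symmetrize to $\chi \defeq \frac{1}{n}\sum_{k=0}^{n-1}\cyc^k \chi_0 \in \CC^{n-1}(A)$ and examine the discrepancy $\psi - \dif\chi$. A direct calculation, using the commutation relations between $\cyc$ and $\dif$ that underlie Connes' $B$ operator, shows that this discrepancy can be written as $\dif(\text{explicit correction})$ plus an expression of the form $S\tilde\psi$ with $\tilde\psi\in\ZC^{n-2}(A)$ and $\|\tilde\psi\|$ bounded by a fixed power of $K$ times $\|\psi\|$ (roughly $K^4\|\psi\|$). Iteration reduces each of (i) and (ii) to the respective base case $m=0$, with accumulated norm cost $K^{4m}\|\psi\|$ in case (i) and $K^{4m+2}\|\psi\|$ in case (ii), the extra factor $K^2$ arising when the trace is extracted at the base.

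The base cases are direct. For (i) at $m=0$: any element of $A' = \CC^0(A)$ is trivially cyclic, and $\chi \defeq h_1(\psi)$ satisfies $\dif\chi = \psi$ with $\|\chi\|\le K^2\|\psi\|$. For (ii) at $m=0$: $h_2(\psi)$ cobounds $\psi$ as a Hochschild cocycle, and the obstruction to its cyclicity produces a trace~$\tau$ with $\|\tau\|\le K^2\|\psi\|$ such that $\psi - \tau^{(2)}$ is a cyclic coboundary with controlled witness.

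The main obstacle I anticipate is the bookkeeping: keeping the $K$-dependence in the clean geometric form $K^{4m}$ rather than something compounding faster through the induction, and arranging that the cyclic averaging at each of the~$m$ stages contributes only a polynomial factor, absorbed into the $(m+1)^3$ overhead, rather than a factorial in~$m$. This requires exploiting that successive reductions reuse the \emph{same} biflat splitting, so that norm costs do not multiply independently across stages; this quantitative fact is implicit in Helemskii's argument but must be made explicit here.
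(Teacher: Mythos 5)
The first thing to say is that the paper does not prove Theorem~\ref{t:unifCC-biflat} at all: it is quoted from \cite{YC_CTconst}, with the remark that all that matters for the sequel is that the constants depend only on the degree and on the biflatness constant~$K$. So there is no internal proof to compare yours against, and your proposal has to be judged as a would-be self-contained argument. In outline it is the natural one --- make \cite[Theorem~25]{He1} quantitative by descending two degrees at a time via a cochain-level periodicity operator, using a norm-controlled splitting of the simplicial cochain complex supplied by biflatness --- and this is presumably close in spirit to what \cite{YC_CTconst} carries out.

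As written, however, both load-bearing steps are assertions rather than proofs. First, the family $h_n$ with $\dif h_n + h_{n+1}\dif = \operatorname{id}$ and $\norm{h_n}$ of order $K^{2n}$ is precisely the quantitative content that has to be extracted from biflatness via \cite{He0}; you state it with ``roughly'' and no construction, yet every exponent in the statement hinges on it. Second, the crux --- that after cyclically averaging $h_n(\psi)$ the discrepancy is a cyclic coboundary plus $S\tilde\psi$ for some $\tilde\psi\in\ZC^{n-2}(A)$ with $\norm{\tilde\psi}$ of order $K^4\norm{\psi}$ --- is dismissed as ``a direct calculation''. That calculation is the cochain-level Connes--Tzygan descent, and for a non-unital Banach algebra it requires a bounded substitute for unitality: a controlled splitting of the reduced (bar-type) complex, the cochain analogue of the complex $\CR_*$ that this very paper spends Section~\ref{s:h-unital} establishing for $\ell^1(S)$. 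Extracting such a splitting, with norms, from biflatness is an ingredient you never address, and the same gap sits inside your base case for (ii), where ``the obstruction to cyclicity produces a trace'' is the entire point. Finally, even granting both steps, your own accounting gives $K^{4m+2}$ for the even case overall, which does not yield the stated sharper bound $\norm{\tau}\leq K^{2(m+1)}\norm{\psi}$, and the factor $2(m+1)^3$ is nowhere derived. For the purposes of this paper any $(m,K)$-dependent constants would do, but as a proof of the theorem as stated the essential quantitative content is still missing --- it is exactly what the cited reference exists to supply.
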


Theorem~\ref{t:unifCC-biflat} may well be implicitly known to specialists; a fairly direct and self-contained proof can be found in~\cite{YC_CTconst}.
The important aspect, for our purposes, is that the constants which control the cobounding depend only on the degree of the cocycle and on the biflatness constant~$K$.
\end{subsection}

\begin{subsection}{Initializing a cyclic cocycle on $\ell^1(S)$}
\label{ss:initializing}

Given a cyclic cocycle $\psi$, we are trying to find a cyclic cochain $\chi$ such that $\psi-\dif\chi$ vanishes on a conveniently large set. This will be done in stages: the precise definition for our first step is as follows.

\begin{Definition}
Let $\phi\in\ZC^n(\ell^1(S))$. We say that $\phi$ is \dt{rectangular-band-normalized}, or \dt{$R$-normalized} for short, if it vanishes on $\et = \chain$ whenever all $x_i$ are in the same rectangular band component of $S$.
\end{Definition}

\begin{lem}\label{l:traces}
\
\begin{YCnum}
\item If $R$ is a rectangular band, every continuous trace on $\ell^1(R)$ is a scalar multiple of the augmentation character $\epsilon: \ell^1(R) \to \Cplx$ where $\epsilon(\sum_{s\in R} \lambda_s s) = \sum \lambda_s$.
\item If $S$ is an arbitrary band, decomposed canonically as $\coprod_{\al\in L} R_\al$\/, then $\SZ{0}{\ell^1(S)}$ is isomorphic to $\ell^\infty(L)$.
\end{YCnum}
\end{lem}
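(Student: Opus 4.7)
The plan is to prove (i) by direct use of the explicit presentation of rectangular bands given in the preceding theorem, then to deduce (ii) by observing that a $0$-cocycle on $\ell^1(S)$ is precisely a continuous trace, and that each $\ell^1(R_\alpha)$ sits inside $\ell^1(S)$ as a closed subalgebra.

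For (i), I would use the description $R\cong A\times B$ with product $(a_1,b_1)(a_2,b_2)=(a_1,b_2)$. If $\tau$ is a continuous trace on $\ell^1(R)$, then applying the trace identity $\tau(xy)=\tau(yx)$ to $x=(a_1,b_1)$ and $y=(a_2,b_2)$ gives
\[ \tau((a_1,b_2)) = \tau((a_2,b_1)) \]
for all $a_1,a_2\in A$ and all $b_1,b_2\in B$. Since the four indices are independent, any prescribed pair of elements of $R$ can be realized in this form, so $\tau$ is constant on the basis of $\ell^1(R)$; by linearity and continuity it is a scalar multiple of~$\epsilon$.

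For (ii), note that $\delta T(a_1)(a_2)=T(a_2a_1)-T(a_1a_2)$, so $\SZ{0}{\ell^1(S)}$ is exactly the space of continuous traces. Each $R_\alpha$ is closed under multiplication (since $\alpha\alpha=\alpha$ in~$L$), so $\ell^1(R_\alpha)$ is a closed subalgebra of $\ell^1(S)$. Restricting $\tau\in\SZ{0}{\ell^1(S)}$ to $\ell^1(R_\alpha)$ gives a trace which, by part (i), equals $c_\alpha\epsilon_\alpha$ for some scalar $c_\alpha$. The assignment $\Phi:\tau\mapsto (c_\alpha)_{\alpha\in L}$ lands in $\ell^\infty(L)$ because $|c_\alpha|=|\tau(x)|\leq\norm{\tau}$ for any $x\in R_\alpha$, and is clearly injective.

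For surjectivity, given $(c_\alpha)\in\ell^\infty(L)$ I would define $\tilde\tau\in\ell^1(S)'$ by
\[ \tilde\tau\Bigl(\sum_{s\in S}\lm_s s\Bigr)\defeq \sum_{\alpha\in L} c_\alpha \sum_{s\in R_\alpha} \lm_s, \]
which is bounded with $\norm{\tilde\tau}=\sup_\alpha|c_\alpha|$. To check the trace property, take $x\in R_\alpha$ and $y\in R_\beta$; then $xy\in R_{\alpha\beta}$ and $yx\in R_{\beta\alpha}$, and since $L$ is commutative these are the same component, so $\tilde\tau(xy)=c_{\alpha\beta}=\tilde\tau(yx)$. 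There is really no obstacle: the only place where one uses something nontrivial is surjectivity, and it amounts exactly to commutativity of the structure semilattice~$L$.
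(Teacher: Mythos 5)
Your proof is correct and follows essentially the same route as the paper: part (ii) is exactly the paper's restriction-to-each-$\ell^1(R_\al)$ argument (with the surjectivity details, which the paper leaves implicit, filled in correctly via commutativity of $L$). For part (i) you use the coordinate description $R\cong A\times B$ where the paper instead fixes $e\in R$ and uses $\tau(x)=\tau(xeex)=\tau(exxe)=\tau(e)$, but this is only a cosmetic difference.
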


\begin{proof}[Proof sketch]
Part (i) is proved by fixing $e\in R$ and noting that any trace $\tau$ on $\ell^1(R)$ must satisfy $\tau(x)=\tau(xeex) = \tau(exxe) = \tau(e)$ for all $x\in R$.
Part (ii) follows by considering the restriction of a trace $\tau\in \SZ{0}{\ell^1(S)}$ to each subalgebra $\ell^1(R_\al)$.
\end{proof}


\begin{prop}[Normalization on each rectangular component]\label{p:initialization}
Let $n\geq 1$\/.
\begin{YCnum}
\item For every $\psi\in \ZC^{2n-1}(\ell^1(S))$, there exists $\chi\in \CC^{2n-2}(\ell^1(S))$ such that $\psi-\dif\chi$ is $R$-normalized.
\item For every $\psi\in \ZC^{2n}(\ell^1(S))$, there exists $\tau \in \SZ{0}{\ell^1(S)}$ and $\chi\in \CC^{2n-1}(\ell^1(S))$ such that $\psi-\tau^{(2n)} -\dif\chi$ is $R$-normalized.
\end{YCnum}
\end{prop}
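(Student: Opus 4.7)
The plan is to decompose $\psi$ according to the rectangular components $\{R_\alpha\}_{\alpha\in L}$, cobound on each component separately using Theorem~\ref{t:unifCC-biflat} with its \emph{uniform} constants, and then assemble the pieces into a single global cochain (and, in the even case, a single global trace) on $\ell^1(S)$.

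More concretely, since each $R_\alpha$ is a subsemigroup of $S$, the subalgebra $\ell^1(R_\alpha)\subseteq \ell^1(S)$ is closed under multiplication and the restriction $\psi_\alpha \defeq \psi|_{\ell^1(R_\alpha)^{\ptp n+1}}$ is a cyclic cocycle on $\ell^1(R_\alpha)$ of the same degree as $\psi$, with $\norm{\psi_\alpha}\leq\norm{\psi}$. Because $\ell^1(R_\alpha)$ is $1$-biprojective (and thus biflat with constant $K=1$), Theorem~\ref{t:unifCC-biflat} yields a cyclic cochain $\chi_\alpha$ on $\ell^1(R_\alpha)$ with $\psi_\alpha=\dif\chi_\alpha$ in the odd case, and cyclic $\chi_\alpha$ together with a trace $\tau_\alpha$ on $\ell^1(R_\alpha)$ with $\psi_\alpha = \tau_\alpha^{(2n)}+\dif\chi_\alpha$ in the even case; in both cases the norms of $\chi_\alpha$ and $\tau_\alpha$ are bounded by a constant depending only on $n$ (not on $\alpha$) times $\norm{\psi}$.

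Now I assemble. Define $\chi\in \CC^{n-1}(\ell^1(S))$ by setting
\[
\chi(x_1,\ldots,x_{n-1})(x_n) \defeq
\begin{cases}
\chi_\alpha(x_1,\ldots,x_{n-1})(x_n) & \text{if $x_1,\ldots,x_n\in R_\alpha$ for some $\alpha\in L$,}\\
0 & \text{otherwise,}
\end{cases}
\]
extended by linearity and continuity to $\ell^1(S)^{\ptp n}=\ell^1(S^n)$. This is well defined since distinct $R_\alpha$ are disjoint, has norm $\sup_\alpha\norm{\chi_\alpha}\leq C_n\norm{\psi}$, and is cyclic because the cyclic shift preserves the property of lying in a single $R_\alpha$. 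In the even case, Lemma~\ref{l:traces}(i) tells us that $\tau_\alpha=c_\alpha\epsilon_\alpha$ for some scalar $c_\alpha$ with $|c_\alpha|\leq\norm{\psi}$; by Lemma~\ref{l:traces}(ii), the family $(c_\alpha)\in\ell^\infty(L)$ defines a single continuous trace $\tau\in\SZ{0}{\ell^1(S)}$ whose restriction to $\ell^1(R_\alpha)$ is exactly $\tau_\alpha$.

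The final step is a routine verification. Take an elementary tensor $\et=\chain$ with all $x_i$ in a common $R_\alpha$. Every multiplication appearing in the Hochschild coboundary formula for $\dif\chi$ stays inside $R_\alpha$ (since $R_\alpha R_\alpha\subseteq R_\alpha$), so each term feeds $\chi$ on a tensor supported in $R_\alpha$, where $\chi=\chi_\alpha$. Hence $(\dif\chi)(\et)=(\dif\chi_\alpha)(\et)$, and similarly $\tau^{(2n)}(\et)=\tau_\alpha^{(2n)}(\et)$; the defining equation for $\chi_\alpha$ (and $\tau_\alpha$) then gives $(\psi-\dif\chi)(\et)=0$ (respectively $(\psi-\tau^{(2n)}-\dif\chi)(\et)=0$), which is exactly $R$-normalization. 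The only real subtlety in the argument is ensuring uniform norm control as $\alpha$ ranges over the (possibly infinite) semilattice $L$; this is precisely what the ``$K$-uniform'' refinement Theorem~\ref{t:unifCC-biflat} was stated to guarantee, and is the reason we cannot simply quote the classical Connes--Tzygan cobounding result from \cite{He1}.
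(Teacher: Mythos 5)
Your proposal is correct and follows essentially the same route as the paper's own proof: restrict $\psi$ to each $\ell^1(R_\alpha)$, cobound there using the uniform constants of Theorem~\ref{t:unifCC-biflat} (with $K=1$ from $1$-biprojectivity), identify the component traces via Lemma~\ref{l:traces} and glue them into a global trace, and assemble the $\chi_\alpha$ into a single cyclic cochain supported on tuples lying in one component. The verification that products of elements of $R_\alpha$ stay in $R_\alpha$, so that $\dif\chi$ and $\tau^{(2n)}$ restrict correctly, is exactly the paper's concluding step.
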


\begin{proof}
We recall that for each $\al$\/, the algebra $\ell^1(R_\al)$ is biprojective with constant~$1$\/.

The case of odd degree is straightforward. Given $\psi\in\ZC^{2n-1}(\ell^1(S))$, let $\psi_\al$ denote the restriction of $\psi$ to the subalgebra $\ell^1(R_\al)$\/; then by Theorem~\ref{t:unifCC-biflat}, for each $\al$ there exists $\chi_\al\in \CC^{2n-2}(\ell^1(R_\al))$ such that $\dif\chi_\al=\psi_\al$ and
\[ \norm{\chi_\al}\leq 2n^3 \norm{\psi_\al}\leq K'_n \norm{\psi},\]
\emph{where the constant $K'_n$ does not depend on $\al$}\/.
Given a $(2n-1)$-tuple $(x_1,\ldots, x_{2n-1})\in S^{2n-1}$\/, we define
\[
\chi(x_1,\ldots, x_{2n-2})(x_{2n-1}) \defeq
\left\{ \begin{aligned}
\chi_\al(x_1,\ldots, x_{2n-2})(x_{2n-1}) & \quad\text{ if $x_1,\ldots, x_{2n-1}\in R_\al$ for some $\al\in L$\/,} \\
0 & \quad\text{ otherwise.}
\end{aligned}\right.
\]
Then $\chi$ extends to a bounded $(2n-1)$-multilinear functional on $\ell^1(S)$\/, which is clearly a cyclic cochain since each $\chi_\al$ is. By construction, if $x_1,\ldots, x_{2n}\in R_\al$ for some $\al$\/, then
\[ (\psi-\dif\chi)(x_1,\ldots, x_{2n-1})(x_0) = (\psi_\al-\dif\chi_\al) (x_1,\ldots, x_{2n-1})(x_{2n}) = 0,\]
and thus $\psi-\dif\chi$ is $R$-normalized.

The case of even degree is similar, except that we have to deal with cocycles arising from traces.
As before, let $\psi\in\ZC^{2n}(\ell^1(S))$\/, and for each $\al$ let $\psi_\al\in \ZC^{2n}(\ell^1(R_\al))$ be the restriction of $\psi$ to $\ell^1(R_\al)$ in each variable.
By Theorem~\ref{t:unifCC-biflat}, for each $\al$ there exists $\chi_\al\in \CC^{2n-1}(\ell^1(R_\al))$ and $\tau_\al\in \ZC^0(\ell^1(R_\al))$ such that $\dif\chi_\al+\tau_\al^{(2n)}=\psi_\al$ with
\[ \norm{\tau_\al}\leq K''_n\norm{\psi} \quad\text{ and }\quad\norm{\chi_\al}\leq K''_n \norm{\psi},\]
\emph{where the constant $K''_n$ does not depend on $\al$}\/.

By Lemma~\ref{l:traces} each $\tau_\al$ is constant, with value~$c_\al$ say. Let $\tau: S\to \Cplx$ be defined by $\tau:R_\al \to \{c_\al\}$; then $\tau$ is a bounded trace on $\ell^1(S)$\/, and the restriction of $\tau^{(2n)}$ to $\ell^1(R_\al)$ is clearly just $\tau_\al^{(2n)}$\/.

Also, given a $2n$-tuple $(x_1,\ldots, x_{2n})\in S^{2n}$\/, we define
\[
\chi(x_1,\ldots, x_{2n-1})(x_{2n}) \defeq
\left\{ \begin{aligned}
\chi_\al(x_1,\ldots, x_{2n-1})(x_{2n}) & \quad\text{ if $x_1,\ldots, x_{2n}\in R_\al$ for some $\al\in L$\/,} \\
0 & \quad\text{ otherwise.}
\end{aligned}\right.
\]
Then $\chi$ extends to a well-defined cyclic $(2n-1)$-cochain on $\ell^1(S)$\/, and by construction we find that, for each $\al$ and every $x_1,\ldots, x_{2n+1}\in R_\al$\/,
\[  (\psi-\tau^{(2n)}-\dif\chi)(x_1,\ldots, x_{2n})(x_{2n+1})
  = (\psi_\al-\tau_\al^{(2n)}-\dif\chi_\al)(x_1,\ldots, x_{2n})(x_{2n+1})
\]
as required.
\end{proof}
\end{subsection}

\end{section}

\begin{section}{A sufficient condition for using the Connes-Tzygan sequence}
\label{s:h-unital}
\begin{dfn}
Given a Banach algebra $B$, the \dt{reduced Hochshild complex} $\CR_*(B)$ is the following chain complex of Banach spaces:
\[ 0 \longleftarrow B \lto{d} B^{\ptp 2} \lto{d} \cdots \lto{d} B^{\ptp n} \lto{d} \cdots \]
where the boundary map $d$ is defined by
\[ d(b_1\tp \cdots\tp b_{n+1}) = \sum_{j=1}^n (-1)^j \filler{j-1}\tp b_jb_{j+1} \tp \filler{n-j} \qquad(b_1,\dots, b_{n+1}\in B). \]
For each $n\geq 0$, we write $\ZR_n(B)$ for the kernel of $d:B^{\ptp n+1}\to B^{\ptp n}$ and $\BR_n(B)$ for the image of $d:B^{\tp n+2}\to B^{\ptp n+1}$\/.
\end{dfn}

Consider the case $B=\ell^1(S)$.
In order to construct the Connes-Tzygan long exact sequence for $\ell^1(S)$, we need to know that the complex $\CR_*(\ell^1(S))$ is exact, i.e.~that $\BR_n(\ell^1(S))=\ZR_n(\ell^1(S))$ for all $n\geq 0$ (see \cite[Theorem 11]{He1}). This is the goal of the current section.

\begin{rem}
Even in the case where $B=L$, i.e.~when our band is a semilattice, the result is not immediately obvious; hitherto, the only known proof used a special case of the main results in~\cite{Ch1}.
\end{rem}

The case $n=0$ is trivial, since $\ZR_0(\ell^1(S))=\ell^1(S)$, and for $a=\sum_{s\in S} \lambda_s s\in\ell^1(S)$ we have $d\left(\sum_{s\in S}\lm_s \lcu{s}\tp s \right) = a$. We therefore restrict attention in what follows to the case $n\geq 1$.

Left-coherent units will play a key role in our proof. In particular we need the following lemma in several places.
\begin{Lemma}\label{l:units-trick}
Let $y,z\in S$. Then $\lcu{\lcu{y}}=\lcu{y}$ and $\lcu{y}\lcu{yz}=\lcu{yz}$.
\end{Lemma}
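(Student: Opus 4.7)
The plan is to derive both identities directly from the three bulleted properties of $\lcu{\cdot}$ listed right after its definition; the calculations involved are short and do not require any deeper structural facts about bands.

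For the first identity, the key observation is that if $y\in R_\alpha$ then $\lcu{y}$ also lies in $R_\alpha$ (by the first bulleted property of $\lcu{\cdot}$), so both $\lcu{y}$ and $y$ belong to the same rectangular component. Applying the third bulleted property with $\alpha=\beta$ (both factors in $R_\alpha$, and trivially $\alpha\preceq\alpha$) gives $\lcu{\lcu{y}\cdot y}=\lcu{\lcu{y}}$, and the second bulleted property $\lcu{y}y=y$ then collapses the left-hand side to $\lcu{y}$.

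For the second identity, let $y\in R_\alpha$, $z\in R_\beta$, and set $\gamma=\alpha\beta$, so that $yz\in R_\gamma$ and $\lcu{yz}=(yz)y_\gamma$ by the definition of the left coherent unit. The proof is now a one-line calculation: use associativity of the semigroup product to regroup $\lcu{y}\lcu{yz}$ as $(\lcu{y}y)\cdot z\cdot y_\gamma$, and then invoke $\lcu{y}y=y$ to rewrite this as $yzy_\gamma=\lcu{yz}$.

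There is no real obstacle here; the lemma is a short bookkeeping exercise with the defining properties of $\lcu{\cdot}$. The only point to keep straight is which rectangular component is implicit in each occurrence of $\lcu{\cdot}$, so that the correct element $y_\gamma$ is used when unpacking the definition in each of the two identities.
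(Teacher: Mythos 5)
Your proof is correct. For the first identity you argue exactly as the paper does: apply the coherence property $\lcu{xy}=\lcu{x}$ (with $x=\lcu{y}$, the other factor $y$, and $\alpha\preceq\alpha$) and then use $\lcu{y}y=y$. For the second identity, however, you take the other of the two routes the paper alludes to: you unpack the explicit construction, writing $\lcu{yz}=(yz)y_\gamma$ with $\gamma=[y][z]$, and finish by associativity and $\lcu{y}y=y$; this is a perfectly valid one-line argument. The paper's written proof instead stays at the level of the abstract properties: it first establishes $x\lcu{x}=\lcu{x}$ from the rectangular-band identity (since $x$ and $\lcu{x}$ lie in the same component $R_{[x]}$) and then applies this twice with $x=yz$, which has the mild advantage of showing the lemma holds for \emph{any} function $\lcu{\,\cdot\,}$ satisfying the three listed coherence properties, independently of the particular choice of the elements $y_\alpha$. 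One small mismatch in your write-up: you announce that both identities will be derived ``directly from the three bulleted properties,'' but your second argument actually invokes the definition $\lcu{x}=xy_\alpha$ rather than those properties; since the definition is available in the paper this is not a gap, but the stated plan and the execution should agree.
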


\begin{proof}
Both identities follow from our explicit construction of the function $\lcu{\cdot}$. They can also be deduced from the coherence properties that were observed earlier.

The first identity follows since $\lcu{x}=\lcu{xy}$ for all
$x\in R_\alpha$ and $y\in R_\beta$ with $\alpha\preceq\beta$\/, so that taking $x=\lcu{y}$ does the job.
For the second identity, note that
$x\lcu{x} = (\lcu{x} x) \lcu{x} = \lcu{x}$,
the last equality following because $x$ and $\lcu{x}$ lie in the same rectangular band. In particular, taking $x=yz$ yields
\[ \lcu{y}\lcu{yz} = \lcu{y}(yz\lcu{yz}) = yz\lcu{yz} = \lcu{yz} \]
as required.
\end{proof}

For $1\leq k \leq n+1$, let $s_k:\ell^1(S^{n+1})\to \ell^1(S^{n+2})$ be defined by
\begin{equation}\label{eq:dfn-inserts}
s_k(\subchain{1}{n+1}) = (-1)^k \subchain1{k-1}\tp \lcu{x_k}\tp \subchain{k}{n+1}\,.
\end{equation}
Then put
\[ Q_k\defeq ds_k+s_kd-I:\ell^1(S^{n+1})\to \ell^1(S^{n+1}).\]
If $z\in\ZR_n(\ell^1(S))$, then a straightforward calculation shows that $Q_k\cdots Q_1(z)$ is homologous to $(-1)^k z$ (that is, the two tensors differ by an element of $\BR_n$). The work lies in obtaining a formula for $Q_n\cdots Q_1$, which will allow us to see that $Q_n\cdots Q_1(z)$ is homologous to zero.

\begin{rem}
We give some motivation for the introduction of the maps $s_k$ and $Q_k$, and the attention paid to $Q_n\dotsb Q_1$\/.
When $B$ is a Banach algebra with identity~$\id$, it is well known that the chain complex $\CR_*(B)$ is exact, and that one can construct an explicit contracting homotopy~$\sigma$. The maps $\sigma_{n+1}: B^{\ptp n+1}\to B^{\ptp n+2}$ are given by $\sigma(b_1\tp \cdots\tp b_{n+1}) = - \id \tp b_1\tp\cdots\tp b_{n+1}$\/, and one can check directly that $d\sigma+\sigma d = I$.

In the present case, $\ell^1(S)$ might not even have a bounded approximate identity. Nevertheless, since we do have local left-coherent units $\lcu{x}$,
it is natural to see how far $ds_1+s_1d$ is from being the identity map, i.e.~how far $Q_1$ is from being zero.

Although $Q_1(\et)$ is in general non-zero (see~\eqref{eq:en-avance} below), it is a tensor with more `structure' than $\et$ in some sense; and when we successively apply the maps $Q_2$, \dots, $Q_n$,
at each stage we increase the amount of structure present.
Thus, given $z\in \ZR_n(\ell^1(S))$, the tensor $Q_n\dotsb Q_1(z)$, which as already remarked is homologous to $(-1)^{n}z$\/, will be so highly structured that it falls into $\BR_n(\ell^1(S))$.
\end{rem}

\medskip
Let us start our argument by calculating $Q_1(\et)$ where $\et=x_1\tp \cdots\tp x_{n+1}$\/.
Since
\[ \begin{aligned}
 ds_1(\et) & =  x_1\tp\ldots\tp  x_{n+1}  &+&\quad \sum_{j=1}^{n}(-1)^j \lcu{x_1}\tp x_1\tp \cdots\tp x_jx_{j+1}\tp\filler{n-j} \\
\text{and}\quad
 s_1d(\et) & = \lcu{x_1x_2}\tp x_1x_2\tp\filler{n-1}
	 &+&\quad \sum_{j=2}^{n} (-1)^{j-1} \lcu{x_1}\tp x_1\tp \cdots\tp x_jx_{j+1}\tp\filler{n-j}\,,
\end{aligned}  \]
we have
\begin{equation}\label{eq:en-avance}
Q_1(\et) = - \lcu{x_1}\tp x_1x_2\tp \filler{n-1} \quad+\quad \lcu{x_1x_2} \tp x_1x_2 \tp\filler{n-1}\,.
\end{equation}
Note that $\ran(Q_1)$ is contained in the kernel of the map
\[ \rho_1: y_1\tp \cdots\tp y_{n+1} \mapsto y_1\lcu{y_2}\tp y_2 \tp\filler{n-1}\,. \]

Now let $\yy=y_1\tp\ldots\tp y_{n+1}$ where $y_1,\dots,y_{n+1}\in S$.
A similar calculation to that for $Q_1$ shows that
\[ Q_2(\yy) = \left\{
	\begin{aligned}
	- \rho_1(\yy)
	\quad & \quad - y_1\tp \lcu{y_2}\tp y_2y_3 \tp \filler{n-2} \\
	- y_1y_2\tp \lcu{y_3}\tp y_3\tp \filler{n-2}
	\quad & \quad + y_1\tp \lcu{y_2y_3}\tp y_2y_3 \tp \filler{n-2} \\
	\end{aligned}\right. \]
Then, since $\rho_1Q_1(\et)=0$, we find after some calculation that
\begin{equation}
\begin{aligned}
 Q_2Q_1(\et)
	 = & \quad \lcu{x_1}\tp\lcu{x_1x_2}\tp x_1x_2x_3 \tp \filler{n-2}
	   & -\quad &  \lcu{x_1}\tp\lcu{x_1x_2x_3}\tp x_1x_2x_3 \tp \filler{n-2} \\
	   & - \lcu{x_1x_2}\tp\lcu{x_1x_2}\tp x_2x_2x_3 \tp \filler{n-2}
	   & +\quad &\lcu{x_1x_2}\tp\lcu{x_1x_2x_3}\tp x_1x_2x_3 \tp \filler{n-2} .\\
\end{aligned}
\end{equation}
By Lemma~\ref{l:units-trick}, $\lcu{x_1 x_2}\lcu{x_1 x_2 x_3}=\lcu{x_1 x_2 x_3}\lcu{x_1 x_2 x_3}$\/: hence $Q_2Q_1(\et)$ lies in the kernel of the map
\[ \rho_2: x_1\tp \cdots\tp x_{n+1} \mapsto x_1\tp x_2\lcu{x_3}\tp x_3\tp \filler{n-3} \,. \]
It also lies in $\ker(\rho_1)$, as can be shown using Lemma~\ref{l:units-trick} again.



\medskip
By continuing in this way, one could calculate $Q_n\dotsb Q_1(\et)$ directly; but it would become harder to keep track of the terms involved and how they cancel.
To do the necessary book-keeping, we write the boundary operator as an alternating sum of \dt{face maps}.
That is, for $n\geq 1$ and $1\leq i\leq n$, let $\face_i: \ell^1(S^{n+1}) \to \ell^1(S^{n})$ denote the map defined by
\[ \face_i: \subchain1{n+1} \mapsto \filler{i-1} \tp x_ix_{i+1}\tp \filler{n-i}\,, \]
so that $d = \sum_{i=1}^{n} (-1)^i \face_i: \ell^1(S^{n+1}) \to \ell^1(S^{n})$.

The following identities are easily verified by checking on elementary tensors.
\begin{subequations}
\begin{align}
\label{eq:initial-terms}
\face_i s_k & = - s_{k-1}\face_i & \qquad\text{ if $i+2 \leq k\leq n+1$} \\
\label{eq:in-and-out}
\face_i s_i & = (-1)^i I \\
\label{eq:tail-terms}
\face_i s_k & = s_k \face_{i-1} & \qquad\text{ if $k+2\leq i \leq n+1$.}
\end{align}
\end{subequations}
Using these identities, for $1\leq k \leq n$ we may rewrite $Q_k$ as

\begin{equation}\label{eq:expand-Q}
\begin{aligned}
Q_k & = \sum_{i=1}^{n+1} (-1)^i \face_i s_k + \sum_{i=1}^{n} (-1)^i s_k \face_i - I  \\
    & = \sum_{i=1}^{k+1} (-1)^i \face_i s_k + \sum_{i=1}^k  (-1)^i s_k \face_i - I
    & \quad\text{(by \eqref{eq:tail-terms})} \\
& =
	\sum_{i=1}^{k-1} (-1)^i \face_i s_k + (-1)^{k+1}\face_{k+1}s_k
	 + \sum_{i=1}^k(-1)^i s_k \face_i & \quad(\text{by \eqref{eq:in-and-out}}) \\
& =
	\sum_{i=1}^{k-2} (-1)^{i+1} s_{k-1}\face_i  - \rho_{k-1}
 + (-1)^{k+1}\face_{k+1}s_k  + \sum_{i=1}^k(-1)^i s_k \face_i
 & \quad(\text{by \eqref{eq:initial-terms}})
\end{aligned}
\end{equation}

\noindent
where $\rho_i = (-1)^{i+1} \face_i s_{i+1}$, i.e.
\begin{equation}\label{eq:define-rho}
\rho_i(\subchain1{n+1}) = \filler{i-1}\tp x_i\lcu{x_{i+1}}\tp x_{i+1}\tp\filler{n-i}\,.
\end{equation}
Note that
$\face_i\rho_i = \face_i$\/, which
implies that $\ker\rho_i\subseteq \ker\face_i$.

Next, for $1\leq k \leq n$, let
$\widetilde{Q}_k \defeq (-1)^{k+1} \face_{k+1} s_k + (-1)^k s_k\face_k$, so that
\begin{equation}
\widetilde{Q}_k(\et) =
 - \filler{k-1}\tp \lcu{x_k}\tp x_kx_{k+1}\tp \filler{n-k}
   \quad + \quad
   \filler{k-1} \tp \lcu{x_kx_{k+1}}\tp x_kx_{k+1}\tp \filler{n-k} \,.
\end{equation}
The point of this definition is its use in the following result.

\begin{prop}\label{p:creep}
Let $1\leq r \leq n$. Then
\begin{YCnum}
\item $Q_r\dotsb Q_1 = \widetilde{Q}_r \dotsb\widetilde{Q}_1$\/;
\item $\ran(Q_r\dotsb Q_1) \subseteq (\ker\rho_1)\cap\dots\cap(\ker\rho_r)$\/.
\end{YCnum}
\end{prop}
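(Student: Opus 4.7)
The plan is a simultaneous induction on $r$ for both parts, combining the identity $\face_i\rho_i=\face_i$ (which gives $\ker\rho_i\subseteq\ker\face_i$) with an explicit closed form for $\widetilde{Q}_r\cdots\widetilde{Q}_1(\et)$. The base case $r=1$ is immediate: equation \eqref{eq:expand-Q} specializes to $Q_1=\widetilde{Q}_1$ (all ``extra'' sums are empty and the $\rho_0$ term is absent), and a direct check using Lemma \ref{l:units-trick} together with band idempotency gives $\rho_1\widetilde{Q}_1=0$.

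For the inductive step, part (i) follows from the hypothesis for (ii). Reading off from \eqref{eq:expand-Q},
\[ Q_r - \widetilde{Q}_r = \sum_{i=1}^{r-2}(-1)^{i+1}s_{r-1}\face_i \ - \ \rho_{r-1} \ + \ \sum_{i=1}^{r-1}(-1)^i s_r\face_i, \]
and each term on the right has one of $\face_1,\ldots,\face_{r-1},\rho_{r-1}$ as its rightmost factor. By the inductive hypothesis for (ii), all of these annihilate $Q_{r-1}\cdots Q_1(\et)$, so $Q_r(Q_{r-1}\cdots Q_1(\et))=\widetilde{Q}_r(Q_{r-1}\cdots Q_1(\et))$, and combining with (i) at stage $r-1$ yields $Q_r\cdots Q_1=\widetilde{Q}_r\cdots\widetilde{Q}_1$.

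The main work is (ii), for which I would identify and verify the closed form. Guided by the displayed expressions for $Q_1(\et)$ and $Q_2Q_1(\et)$, I conjecture
\[ \widetilde{Q}_r\cdots\widetilde{Q}_1(\et) = (-1)^r\sum_{\epsilon\in\{0,1\}^r}(-1)^{|\epsilon|}\lcu{x_1\cdots x_{1+\epsilon_1}}\tp\cdots\tp \lcu{x_1\cdots x_{r+\epsilon_r}}\tp x_1\cdots x_{r+1}\tp\filler{n-r}, \]
verified by a straightforward induction on $r$. Since $\widetilde{Q}_r$ replaces the pair at positions $r,r+1$ by $-\lcu{y_r}\tp y_ry_{r+1}+\lcu{y_ry_{r+1}}\tp y_ry_{r+1}$, plugging in the current values $y_r=x_1\cdots x_r$ and $y_{r+1}=x_{r+1}$ produces exactly the new factor $\lcu{x_1\cdots x_{r+\epsilon_r}}$ and propagates the common tail $x_1\cdots x_{r+1}$, while each of the two choices contributes the desired sign.

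Given the formula, (ii) reduces to a cancellation in pairs. For $1\leq i\leq r-1$, applying $\rho_i$ acts on two adjacent factors that are both $\lcu{\cdot}$ of partial products of the $x_j$; using $\lcu{\lcu{y}}=\lcu{y}$ and $\lcu{y}\lcu{yz}=\lcu{yz}$ from Lemma \ref{l:units-trick}, the resulting tensor becomes independent of $\epsilon_i$, so the two summands that agree on all other $\epsilon_j$ have opposite signs and cancel. The case $i=r$ is analogous: position $r+1$ is $x_1\cdots x_{r+1}$, and $\lcu{y}\lcu{yz}=\lcu{yz}$ again erases dependence on $\epsilon_r$, allowing the pairing to cancel. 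The main obstacle is identifying the correct closed form; once it is in hand, the rest is bookkeeping with the two identities of Lemma \ref{l:units-trick}.
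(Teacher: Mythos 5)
Your proposal is correct. Part (i) is essentially the paper's own argument: both of you read off from \eqref{eq:expand-Q} that $Q_k-\widetilde{Q}_k$ is a sum of operators whose rightmost factors are $\face_1,\dots,\face_{k-1}$ or $\rho_{k-1}$, and annihilate these on $\ran(Q_{k-1}\dotsb Q_1)$ using the inductive hypothesis for (ii) together with $\face_i\rho_i=\face_i$. For part (ii), however, you take a genuinely different route. The paper never computes $\widetilde{Q}_r\dotsb\widetilde{Q}_1(\et)$ explicitly; instead it shows the intersection of kernels is stable stage by stage, via three structural facts proved from Lemma~\ref{l:units-trick}: $\rho_i$ commutes with $\widetilde{Q}_k$ for $i\leq k-2$ (disjoint positions), $\rho_{k-1}\widetilde{Q}_k=\rho_{k-1}\widetilde{Q}_k\rho_{k-1}$ (so $\ker\rho_{k-1}$ is preserved), and $\rho_k\widetilde{Q}_k=0$. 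You instead establish the closed form for $\widetilde{Q}_r\dotsb\widetilde{Q}_1(\et)$ as a signed sum over $\epsilon\in\{0,1\}^r$ of tensors $\lcu{x_1\cdots x_{1+\epsilon_1}}\tp\cdots\tp\lcu{x_1\cdots x_{r+\epsilon_r}}\tp x_1\cdots x_{r+1}\tp\filler{n-r}$, and then check that each $\rho_i$, $1\leq i\leq r$, kills it by pairing the summands differing only in $\epsilon_i$, using $\lcu{\lcu{y}}=\lcu{y}$, $\lcu{y}\lcu{yz}=\lcu{yz}$ and idempotency; your formula does reproduce \eqref{eq:en-avance} and the displayed expression for $Q_2Q_1(\et)$ (incidentally correcting the typo $x_2x_2x_3$ there), and the cancellation works in both the range $1\leq i\leq r-1$ and the case $i=r$. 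What your approach buys is an explicit formula for $Q_n\dotsb Q_1(\et)$, which makes the vanishing under every $\rho_i$ (and ultimately the final cobounding step) completely transparent; what the paper's approach buys is economy -- three short operator identities in place of the $2^r$-term bookkeeping, with no need to guess a closed form.
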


\begin{proof}
The proof is by induction. When $r=1$, part (i) is trivial and part (ii) was proved above, see Equation~\eqref{eq:en-avance}. Suppose both parts hold true for $r=k-1$, where $2\leq k\leq n$. Then, since
$\ker\rho_i\subseteq \ker\face_i$ for all $i$, and since (ii) holds for $r=k-1$,
\[
 \face_i (Q_{k-1}\dotsb Q_1) = 0
 \quad\text{ for $1\leq i \leq k-2$, and}\quad
\rho_{k-1} (Q_{k-1}\dotsb Q_1) =0. 
 \]

\noindent
Comparing this with \eqref{eq:expand-Q}, we see that ${Q_k}Q_{k-1}\dotsb Q_1=\widetilde{Q_k}Q_{k-1}\dotsb Q_1$; and since (i) holds for $r=k-1$, we have $Q_k\dotsb Q_1 = \widetilde{Q}_k \dotsb\widetilde{Q}_1$\/. Thus part (i) holds for $r=k$.

To complete the inductive step, we must show (ii) holds for $r=k$, i.e.~that
 $\widetilde{Q}_k(\ker\rho_i)\subseteq\ker\rho_i$ for all $1\leq i \leq k$.

For $1\leq i \leq k-2$ this is straightforward, since a direct check on elementary tensors shows that $\rho_i$ commutes with $\widetilde{Q}_k$.
For $i=k-1$, by using Lemma~\ref{l:units-trick} we obtain

\begin{eqnarray*}
\rho_{k-1}\widetilde{Q}_k(\et)
& = & \rho_{k-1}\left( -\filler{k-2} \tp x_{k-1}\tp\lcu{x_k}\tp x_kx_{k+1}\tp \filler{n-k}\right. \\
&   & \quad\qquad \left. + \filler{k-2} \tp x_{k-1}\tp\lcu{x_kx_{k+1}}\tp x_kx_{k+1}\tp \filler{n-k}\right) \\
& = & - \filler{k-2} \tp x_{k-1}\lcu{x_k} \tp \lcu{x_k}\tp x_kx_{k+1}\tp\filler{n-k} \\
&   & + \filler{k-2} \tp x_{k-1}\lcu{x_kx_{k+1}} \tp\lcu{x_kx_{k+1}}\tp x_kx_{k+1}\tp\filler{n-k} \\
& = & \rho_{k-1}\left( -\filler{k-2} \tp x_{k-1}\lcu{x_k} \tp \lcu{x_k}\tp x_kx_{k+1}\tp \filler{n-k} \right. \\
&   & \quad\qquad + \left. \filler{k-2} \tp x_{k-1}\lcu{x_k} \tp\lcu{x_kx_{k+1}}\tp x_kx_{k+1}\tp \filler{n-k} \right) \\
& = & \rho_{k-1}\widetilde{Q}_k \left(\filler{k-2} \tp x_{k-1}\lcu{x_k} \tp x_k \tp x_{k+1}\tp \filler{n-k} \right)
\qquad =  \rho_{k-1}\widetilde{Q}_k \rho_{k-1} (\et).
\end{eqnarray*}

\noindent
Finally, another direct calculation on elementary tensors, using Lemma~\ref{l:units-trick}, shows that $\rho_k\widetilde{Q}_k=0$\/. This completes the inductive step.
\end{proof}

\begin{lem}\label{l:coup-de-grace}
$(s_nd+ds_{n+1}-I)\widetilde{Q}_n=0$.
\end{lem}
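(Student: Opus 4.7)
The plan is to evaluate $(s_nd + ds_{n+1} - I)\widetilde{Q}_n$ directly on an arbitrary elementary tensor $\et = \chain$. By definition, $\widetilde{Q}_n(\et)$ is the sum of two elementary tensors of the special form $\alpha = \filler{n-1}\tp w\tp y$, namely $(w,y) = (\lcu{x_n}, x_nx_{n+1})$ with a negative sign, and $(w,y) = (\lcu{x_nx_{n+1}}, x_nx_{n+1})$ with a positive sign. So it suffices to compute $(s_nd+ds_{n+1}-I)(\alpha)$ on a generic tensor of this form, and then to specialize to each case.

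The main step is a direct expansion: write out $s_{n+1}(\alpha) = (-1)^{n+1}\filler{n-1}\tp w\tp\lcu{y}\tp y$ and apply $d$, and similarly write $d(\alpha)$ and apply $s_n$. In each case one obtains a sum of terms indexed by the boundary position $j$ at which the multiplication occurs. A careful accounting of the signs $(-1)^j$ from $d$ together with $(-1)^n$ from $s_n$ and $(-1)^{n+1}$ from $s_{n+1}$ shows two things: (i) for each $1\le j \le n-1$, the $j$-th contribution to $s_nd$ cancels the $j$-th contribution to $ds_{n+1}$; (ii) the $(n+1)$-th contribution to $ds_{n+1}$ uses $\lcu{y}y = y$ to return $\alpha$ itself, cancelling the $-I$ part. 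What survives is precisely the $j = n$ term from each of $s_nd$ and $ds_{n+1}$, yielding
\[ (s_nd+ds_{n+1}-I)(\alpha) \;=\; \filler{n-1}\tp\lcu{wy}\tp wy \;-\; \filler{n-1}\tp w\lcu{y}\tp y. \]

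To complete the proof one checks that for each of the two summands making up $\widetilde{Q}_n(\et)$, the three identities $wy = y$, $\lcu{wy} = \lcu{y}$, and $w\lcu{y} = \lcu{y}$ all hold, so that the two residual tensors above become identical and cancel. In the first case ($w = \lcu{x_n}$, $y = x_nx_{n+1}$) these follow from $\lcu{x_n}x_n = x_n$ together with the second identity of Lemma~\ref{l:units-trick} applied to $\lcu{x_n}\lcu{x_nx_{n+1}}$. In the second case ($w = \lcu{y}$) they follow from $\lcu{y}y = y$ and from idempotency of $\lcu{y}$ inside its rectangular band. The main obstacle, if it can be called one, is just the sign bookkeeping in the boundary expansions; the essential algebraic content is precisely that the last two factors of each summand of $\widetilde{Q}_n(\et)$ stand in this special multiplicative relationship, which is exactly why $\widetilde{Q}_n$ was defined as it was.
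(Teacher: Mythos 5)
Your proposal is correct and follows essentially the same route as the paper: one first computes that $(s_nd+ds_{n+1}-I)$ applied to an elementary tensor leaves only the two terms $\filler{n-1}\tp\lcu{wy}\tp wy - \filler{n-1}\tp w\lcu{y}\tp y$ (the paper obtains this via the identities \eqref{eq:initial-terms} and \eqref{eq:in-and-out} rather than by re-expanding, but it is the same computation), and then kills the result on $\widetilde{Q}_n(\et)$ using Lemma~\ref{l:units-trick} together with idempotency. The only cosmetic difference is that you cancel the two residual terms within each summand of $\widetilde{Q}_n(\et)$ separately, whereas the paper cancels the four resulting terms pairwise across the two summands; the identities invoked are the same.
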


\begin{proof}
Using the identities \eqref{eq:initial-terms} and \eqref{eq:in-and-out}, we have
\[ \begin{aligned}
(s_nd+ds_{n+1}-I)(\yy)
 & = \sum_{j=1}^{n}(-1)^j s_n\face_j(\yy) + \sum_{k=1}^{n+1} (-1)^k \face_k s_{n+1}(\yy) - \yy \\
 & = (-1)^{n}s_n\face_n(\yy) + (-1)^{n}\face_ns_{n+1}(\yy)  \\
 & = \quad \filler{n-1} \tp \lcu{y_ny_{n+1}}\tp y_ny_{n+1}
     \quad - \quad \filler{n-1} \tp y_n\lcu{y_{n+1}} \tp y_{n+1}\,.
\end{aligned} \]

Thus
\[ \begin{aligned}
 & (s_nd+ds_{n+1}-I)\widetilde{Q}_n(\et) \\
 & =  - (s_nd+ds_{n+1}-I)\left(\filler{n-1} \tp \lcu{x_n} \tp x_nx_{n+1}\right)  \\
 & \quad  + (s_nd+ds_{n+1}-I)\left(\filler{n-1} \tp \lcu{x_nx_{n+1}} \tp x_nx_{n+1}\right)  \\
 &   =   - \filler{n-1} \tp \lcu{x_nx_{n+1}} \tp x_nx_{n+1}
	\quad + \quad
	   \filler{n-1} \tp \lcu{x_n} \lcu{x_nx_{n+1}}\tp x_nx_{n+1} \\
 & \quad + \filler{n-1} \tp \lcu{x_nx_{n+1}} \tp x_nx_{n+1}
	\quad - \quad
	   \filler{n-1} \tp\lcu{x_nx_{n+1}}\lcu{x_nx_{n+1}}\tp x_nx_{n+1}\,.
\end{aligned} \]
Since
$\lcu{x_n}\lcu{x_nx_{n+1}}=\lcu{x_nx_{n+1}} = \lcu{x_nx_{n+1}}\lcu{x_nx_{n+1}}$, by Lemma~\ref{l:units-trick}, these four terms cancel pairwise to give~$0$.
\end{proof}

\begin{thm}\label{t:we-have-hunital}
The complex $\CR_*(\ell^1(S))$ is exact.
\end{thm}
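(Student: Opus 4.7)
The inclusion $\BR_n(\ell^1(S))\subseteq \ZR_n(\ell^1(S))$ is automatic from $d^2=0$, and the case $n=0$ has already been handled by the explicit formula preceding Lemma~\ref{l:units-trick}. For $n\geq 1$, my plan is to take an arbitrary $z\in\ZR_n(\ell^1(S))$ and combine the three preceding results (the definition of $Q_k=ds_k+s_kd-I$, Proposition~\ref{p:creep} and Lemma~\ref{l:coup-de-grace}) to exhibit $z$ as a boundary.

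First I would note that, since $dz=0$, we have $Q_1(z)=ds_1(z)-z$, so $Q_1(z)\equiv -z \pmod{\BR_n(\ell^1(S))}$; moreover $d(Q_1z)=d^2 s_1(z)=0$, so $Q_1(z)$ is again a cycle. Iterating this observation yields, by a straightforward induction on $k$, that $Q_k\dotsb Q_1(z)$ is a cycle and is congruent to $(-1)^k z$ modulo $\BR_n(\ell^1(S))$. Setting $k=n$ gives
\[
Q_n\dotsb Q_1(z) \equiv (-1)^n z \pmod{\BR_n(\ell^1(S))}, \qquad d\bigl(Q_n\dotsb Q_1(z)\bigr)=0.
\]

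Next, by Proposition~\ref{p:creep}(i), $Q_n\dotsb Q_1(z)=\widetilde{Q}_n\dotsb\widetilde{Q}_1(z)$. Writing $w\defeq \widetilde{Q}_{n-1}\dotsb\widetilde{Q}_1(z)$, we obtain an element $\widetilde{Q}_n(w)=Q_n\dotsb Q_1(z)$ whose boundary vanishes by the previous paragraph. Applying Lemma~\ref{l:coup-de-grace} to $w$ yields
\[
\widetilde{Q}_n(w) \;=\; s_n d\widetilde{Q}_n(w) + d s_{n+1}\widetilde{Q}_n(w) \;=\; d\bigl(s_{n+1}\widetilde{Q}_n(w)\bigr),
\]
because $d\widetilde{Q}_n(w)=0$. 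Hence $Q_n\dotsb Q_1(z)$ lies in $\BR_n(\ell^1(S))$, and combining this with the congruence above gives $(-1)^n z\in\BR_n(\ell^1(S))$, so $z\in\BR_n(\ell^1(S))$ as required.

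The only step that requires any genuine verification is the inductive claim in the first paragraph (that $Q_k\dotsb Q_1(z)$ remains a cycle congruent to $(-1)^k z$), but this is a direct consequence of $d^2=0$ and the definition of $Q_k$; the main technical work has already been done in proving Proposition~\ref{p:creep} and Lemma~\ref{l:coup-de-grace}. Thus no additional obstacle is expected, and the argument is essentially a clean assembly of the pieces built up in the section.
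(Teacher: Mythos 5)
Your proposal is correct and takes essentially the same route as the paper: the same induction showing that $Q_k\dotsb Q_1(z)$ remains a cycle congruent to $(-1)^k z$ modulo $\BR_n(\ell^1(S))$, followed by Proposition~\ref{p:creep}(i) and Lemma~\ref{l:coup-de-grace} to conclude that $Q_n\dotsb Q_1(z)$ is a boundary. The only cosmetic difference is that you track congruences modulo $\BR_n(\ell^1(S))$ throughout, whereas the paper carries the explicit product $(ds_{n+1}-I)(ds_n-I)\dotsb(ds_1-I)(z)$ and expands it out at the final step.
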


\begin{proof}
Let $n\geq 1$\/. As already mentioned, it suffices to prove that $\ZR_n(\ell^1(S))=\BR_n(\ell^1(S))$. Thus, let $z\in \ZR_n(\ell^1(S))$. A simple induction using the definition of the $Q_i$ shows that, for $1\leq k\leq n$,
\[ Q_k\dotsb Q_1(z) = (ds_k-I)\dotsb(ds_1-I)(z)\in \ZR_n(\ell^1(S)). \]
Hence
\[ (s_nd+ds_{n+1}-I)Q_n\dotsb Q_1(z) = (ds_{n+1}-I)(ds_n-I)\dotsb(ds_1-I)(z). \]
Now, combining Proposition~\ref{p:creep} and Lemma~\ref{l:coup-de-grace} yields
\[ (s_nd+ds_{n+1}-I)Q_n\dotsb Q_1 = (s_nd+ds_{n+1}-I)\widetilde{Q}_n\dotsb \widetilde{Q}_1 = 0 \,.\]
Thus $(ds_{n+1}-I)(ds_n-I)\dotsb(ds_1-I)(z)=0$, and expanding out we deduce that $z\in \BR_n(\ell^1(S))$, as required.
\end{proof}

\begin{rem}
Even if the present work is focused on band semigroups, it should nevertheless be noted that the calculations of this section apply equally well to a Clifford semigroup.

For present purposes (cf.~\cite[Theorem IV.2.1]{Ho}), we say that a semigroup $\bbG$ is a \dt{Clifford semigroup} if it decomposes as a disjoint union $\bbG=\coprod_{\al\in L} G_\al$ of subsemigroups, where the indexing set $L$ is a semilattice, each $G_\al$ is a group with identity element $e_\al$, and $G_\al\cdot G_\beta \subseteq G_{\al\beta}$ for all $\al,\beta$\/. We can define an analogous local left unit function $\lcu{\cdot}: \bbG \to \bbG$, which sends $x\in G_\al$ to $e_\al$\/.

If we were then to repeat the calculations of this section, we would find that everything goes through (with slight simplifications, in fact), and would thus obtain a direct proof that the complex $\CR_*(\ell^1(\bbG))$ is exact. This implies, for instance, that we have a Connes-Tzygan long exact sequence for $\ell^1(\bbG)$, so that the results of \cite{YC_SA} for the simplicial cohomology of $\ell^1(\bbG)$ could be applied to obtain results for its \emph{cyclic} cohomology.
\end{rem}

\end{section}

\begin{section}{Inductively reducing down to the $R$-normalized case}

\begin{subsection}{Minimal elements}\label{s:with-minimum}

\begin{Definition}[Degree of elements and tensors]
\label{dfn:degree}
For $a \in S$, let $[a]$ denote its \dt{degree} in $S$, that is if $a\in R_\alpha$ then $[a]\defeq\alpha$.
Then, given an elementary (sub)tensor of point masses $w=\subchain{k}{l}$, define the \dt{degree of $w$}, denoted by $[w]$, to be $[ x_k x_{k+1} \cdots x_{l}]$.
\end{Definition}

In this section, we shall prove that one can cobound any $R$-normalized $\phi\in \ZC_n(\cA)$ on those elementary tensors $\et=\chain$ such that $[x_i]=[\et]$ for some $i$.

We work cyclically with indices when dealing with cyclic cohomology: for instance, the interval $[n-1, 2]$ is
the set $\{n-1, n, n+1, 1, 2\}$ and we call $\subchain{n-1}{n+1}
\otimes x_1 \otimes x_2$ a subtensor. We will sometimes emphasize this
by describing these as \dt{cyclic intervals} or \dt{cyclic subtensors}.

\begin{Definition} An elementary tensor $\et=\chain$ has a \dt{minimal element} $x_i$, for some $i$, 
if $[x_i]=[\et]$. (The degree of such an element is a minimum.) A \dt{minimal block} is a (cyclic)
subtensor $\subchain k{l}$ such that $[x_i] = [\et]$ for all $i$ in
the cyclic interval $[k,\, l]$, and $x_{k-1} \ne [\et] \ne x_{l+1}$.
\end{Definition}

Note that if $\et$ is a minimal block itself, then all $x_i$ are in
the same rectangular band; the assumption that $\phi$ is $R$-normalized therefore implies that it vanishes
on such an $\et$.

For elementary tensors with at least one minimal element, let
\[ J_\et = \{ i \in \{1,2,\ldots,n+1\} \;:\; \text{$x_i$ is the first component of a minimal-block.}\}. \]
$J_\et$ is the set of the indices of all \emph{initial} points of minimal blocks.
%
Given $i\in J_\et$, define $s_i :\Ho{C}{n}(\cA)\to\Ho{C}{n+1}(\cA)$ by
\[ s_i(\chain) = (-1)^{i} (\subchain 1{i-1}\otimes \lcu{x_i}\otimes \subchain i{n+1})\,,\]
and then define $s : \Ho{C}{n}(\cA)\to\Ho{C}{n+1}(\cA)$ on $\et$ with
$j$ minimal-blocks by

\begin{equation}
s(\et) = \frac {1}{j} \sum_{i\in J_\et} s_i(\et).
\end{equation}
If there are no minimal blocks, ($J_\et$ is empty), set $s({\bf
x})= 0$.

Dualizing this operator yields $\sigma :\SC{n+1}{\cA}\to\SC{n}{\cA}$, defined on $\phi\in\SC{n+1}{\cA}$ by
\begin{equation}
\sigma\phi(\et) = \phi (s(\et)).
\end{equation}
We now wish to show that $\sigma$ takes cyclic cochains to cyclic cochains.

\begin{Lemma}\label{l:our-first-mending-is-cyclic}
If $\phi$ is cyclic then so is $\sigma\phi$.
\end{Lemma}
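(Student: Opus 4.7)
The strategy is to exploit cyclicity of $\phi$ to absorb arbitrary signed powers of the cyclic shift: I want to reduce cyclicity of $\sigma\phi$, namely the equation $\sigma\phi(\cyc\et) = \sigma\phi(\et)$, to the statement $\phi(s(\cyc\et)) = \phi(s(\et))$. The plan is then to show that $s(\cyc\et)$ and $s(\et)$ produce $(n+2)$-tensors which are cyclically equivalent up to a common overall sign, so that cyclicity of $\phi$ forces them to agree.

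First I would set up the combinatorial correspondence. Writing $\cyc\et = (-1)^n \et'$ with $\et' = x_{n+1}\tp x_1\tp\cdots\tp x_n$, I note that $[\et']=[\et]$ and that the minimal blocks of $\et'$ are just the cyclic shifts of those of $\et$. This gives a bijection $\tau:J_\et\to J_{\et'}$ defined by $\tau(i)=i+1$ for $i\leq n$ and $\tau(n+1)=1$; in particular $|J_\et|=|J_{\et'}|$, so the normalization $1/|J_\et|$ in the definition of $s$ is respected by $\tau$. The heart of the proof is then an explicit check that for each $i\in J_\et$, the tensor $s_{\tau(i)}(\et')$ differs from $s_i(\et)$ only by an overall sign $(-1)^n$ and a power of the cyclic shift $\cyc$ on $(n+2)$-tensors, the power being $1$ for $i\leq n$ (only $x_{n+1}$ has to be moved to the front) and $2$ for $i=n+1$ (both $\lcu{x_{n+1}}$ and $x_{n+1}$ have to be shifted to the front). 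This is routine sign-tracking using the $(-1)^i$ in the definition of $s_i$ and the $(-1)^{n+1}$ contributed by each application of $\cyc$.

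Once the identity $s_{\tau(i)}(\et') = (-1)^n\,\cyc^{k_i} s_i(\et)$ is in hand, cyclicity of $\phi$ kills the $\cyc^{k_i}$ factor, giving $\phi(s_{\tau(i)}(\et')) = (-1)^n\phi(s_i(\et))$ for each $i\in J_\et$. Summing over $J_\et$ and dividing by $|J_\et|$ yields $\phi(s(\et')) = (-1)^n\phi(s(\et))$, and combining with $s(\cyc\et)=(-1)^n s(\et')$ gives the required $\phi(s(\cyc\et)) = \phi(s(\et))$. The main bookkeeping obstacle I anticipate is the asymmetry between interior indices and the boundary index $i=n+1$, which require different numbers of cyclic shifts; however, this asymmetry is precisely what cyclicity of $\phi$ is designed to smooth out. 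The edge case $J_\et=\emptyset$ is trivial, since $J_{\cyc\et}=\emptyset$ as well and $s(\et)=s(\cyc\et)=0$.
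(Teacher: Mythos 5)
Your proposal is correct and follows essentially the same route as the paper's proof: the cyclic equivariance of the minimal-block structure ($\tau(i)=i+1$, cyclically), the identities $s_{i+1}(x_{n+1}\tp x_1\tp\cdots\tp x_n)=(-1)^n\cyc s_i(\et)$ for $i\leq n$ and $s_1(\cdot)=(-1)^n\cyc^2 s_{n+1}(\et)$ for the boundary case, and then invariance of $\phi$ under $\cyc$ to conclude. The sign bookkeeping you outline checks out, so nothing further is needed.
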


\begin{proof}
The key point is that the definition of minimal blocks is equivariant
with respect to cyclic shifts, that is,
\[ i\in I_{(\chain)} \iff i+1 \in I_{(x_{n+1}\otimes \subchain 1{n}).}
\]
where this is understood cyclically in the case $i=n+1$\/.

If $i\in I_{(\chain)}$ and $1\leq i \leq n$, then
\[ \begin{aligned}
\cyc s_i(\chain)
 & = (-1)^i \cyc(
{\subchain 1{i-1} \otimes \lcu{x_i} \otimes \subchain i{n+1}}
) \\
 & = (-1)^{n+1}(-1)^i (x_{n+1} \otimes
\subchain{1}{i-1} \otimes \lcu{x_i} \otimes \subchain{i}{n}
) \\
 & = (-1)^n s_{i+1}(x_{n+1}\otimes \subchain 1{n}).
\end{aligned} \]
On the other hand, if $n+1\in I_{\chain}$, then
\[ \begin{aligned}
\cyc^2 s_{n+1}(\chain)
 & = (-1)^{n+1} \cyc^2(x_1\tp \cdots\tp\lcu{x_{n+1}}\tp x_{n+1}) \\
 & = (-1)^{n+1}(\lcu{x_{n+1}}\tp x_{n+1} \otimes \subchain 1{n}) \\
 & = (-1)^n s_1(x_{n+1}\tp\subchain1n).
\end{aligned} \]
Thus, if $\psi\in\CC^{n+1}(\cA)$, so that $\psi\circ\cyc=\psi$, we find that
\[ \begin{aligned}
\cyc\sigma\psi(\chain)
 & = (-1)^n \psi(s(x_{n+1}\tp\subchain1n)) \\
 & = \psi(s(\chain))
   = \sigma\psi(\chain)
\end{aligned} \]
as required.
\end{proof}

\begin{Proposition} \label{minimal in sight}
For any $R$-normalized $\phi\in \ZC^n(\cA)$, there exists $\psi \in\CC^{n-1}(\cA)$,
such that $(\phi-\delta\psi)(\et)=0$ for all elementary tensors $\et$ with
some minimal element.
\end{Proposition}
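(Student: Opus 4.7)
Our approach is to iterate the cobounding construction set out in Section~2. Let $\sigma$ denote the cochain adjoint of~$s$, so $\sigma\phi(\eta)=\phi(s\eta)$; by Lemma~\ref{l:our-first-mending-is-cyclic}, $\sigma\phi$ is cyclic whenever $\phi$ is, and thus $\sigma$ restricts to a map $\CC^n(\cA)\to \CC^{n-1}(\cA)$. Since $\phi$ is a cocycle, the general formula from the strategy paragraph gives
\[
(\phi - \delta\sigma\phi)(\et) \;=\; \phi\bigl(\et - (ds+sd)\et\bigr).
\]
We therefore want $\et - (ds+sd)(\et)$ to be expressible as a combination of tensors on which $\phi$ vanishes.

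The core computation is a term-by-term expansion of $ds(\et)$ and $sd(\et)$, organised by the insertion index $i \in J_\et$ and the face index~$j$. In $ds_i(\et)$, the merge $\lcu{x_i}\cdot x_i = x_i$ reproduces $\et$ with coefficient~$+1$; averaging over $J_\et$ restores $\et$ exactly. Generic merges disjoint from both the inserted $\lcu{x_i}$ and the minimal blocks pair off with corresponding terms in $sd(\et)$ and cancel. Merges involving $\lcu{x_i}$ or a neighbour inside a minimal block produce residuals in which an additional coordinate lies in $R_{[\et]}$; using Lemma~\ref{l:units-trick} these combine to enlarge the longest minimal block of~$\et$ by one. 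The delicate cases are those where a face map in $sd$ creates, extends or splits a minimal block, so that $|J_{\et'}|\neq|J_\et|$ and the averaging weights $1/|J|$ do not match.

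Because one application of $\sigma$ only enlarges the longest minimal block by one, the proposition is obtained by iteration. Define the \emph{defect} of an elementary tensor $\et$ with at least one minimal element to be the length of its longest minimal block; so $R$-normalized tensors have defect $n+1$, while tensors with only isolated minima have defect~$1$. Starting from $\phi_0 := \phi$, which vanishes on tensors of defect $n+1$, iteratively set $\phi_{k+1} := \phi_k - \delta\sigma\phi_k$ and show that $\phi_{k+1}$ vanishes on every tensor of defect $\geq n+1-k$. After at most $n$ iterations, $\phi_n$ vanishes on every tensor with some minimal element, and $\psi := \sum_{k=0}^{n-1}\sigma\phi_k$ is the required cochain. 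The principal obstacle is the book-keeping in the case where $|J|$ changes under a face map: the surviving residuals from $ds$ and $sd$ have mismatched coefficients, and one must check, using Lemma~\ref{l:units-trick} to identify iterated left-coherent units $\lcu{x_i x_{i+1} \cdots x_l}$, that they combine into tensors of strictly higher defect.
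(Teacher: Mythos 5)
Your overall skeleton is the same as the paper's: insert $\lcu{x_i}$ at the initial points of minimal blocks (averaged), use Lemma~\ref{l:our-first-mending-is-cyclic} to stay cyclic, start from the $R$-normalized case where all entries lie in one rectangular band, and enlarge the vanishing set step by step via the Section~2 strategy. But there is a genuine gap in your termination argument: the measure you induct on is wrong. You define the defect of $\et$ as the length of its \emph{longest} minimal block and claim that every residual term of $(ds+sd)(\et)-\et$ has strictly larger defect. This is false once $\et$ has more than one minimal block. Since $s$ averages insertions over the initial points of \emph{all} minimal blocks, a typical surviving term has the inserted unit $\lcu{x_i}$ prolonging whichever block begins at position $i$ -- generally not the longest one; moreover a face map can create a brand-new minimal element out of two non-minimal neighbours (one can have $[x_px_{p+1}]=[\et]$ with $[x_p],[x_{p+1}]\succ[\et]$). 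In both situations the residual tensor has one more minimal element in total but the same longest-block length as $\et$, so it is not covered by your hypothesis that $\phi_k$ vanishes on tensors of defect $\geq n+2-k$, and the induction stalls. For instance, with minimal blocks of lengths $3$ and $1$, the residual where the length-one block gets the inserted unit still has defect $3$.

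The paper's proof avoids this by inducting downward on the \emph{total} number of minimal elements $m_\et$ (the sum of the lengths of all minimal blocks), from $m_\et=n+1$, where $R$-normalization gives vanishing, to $m_\et=1$. With that measure every term of $(ds+sd)(\et)$ other than $\et$ itself has at least $m_\et+1$ minimal elements, so it lies in the span of the set already dealt with. This choice also dissolves the ``principal obstacle'' you leave unresolved: the only terms of $sd(\et)$ that are not absorbed into the higher-$m_\et$ error are those in which $d$ acts \emph{inside} a minimal block $u_l$ of length at least two (the terms $d_c(u_l)$ in the paper's notation), and such a face map does not change the number $j$ of minimal blocks, so the averaging weight $1/j$ used by $s$ matches on both sides; the resulting terms cancel sign-by-sign against the corresponding terms $d'_c(\lcu{x_i}\tp u_l)$ of $ds(\et)$, leaving only the merge $\lcu{x_i}\cdot x_i=x_i$, which (summed over the $j$ blocks with weight $1/j$) reproduces $\et$ exactly. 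So your computation paragraph is pointing in the right direction, but the cancellation bookkeeping only closes up, and the iteration only terminates, once the induction is run on $m_\et$ rather than on the longest minimal block.
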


\begin{proof} Let $\et=\chain$ be an tensor with some minimal element and let
$m_{\et}$ be the sum of the length of its minimal blocks.
Since we are working in \emph{cyclic} cohomology, we can by cycling our tensor assume without loss of generality that
$1\in J_\et$.
It will be
convenient to write $\et = u_1 \otimes v_1 \otimes \cdots \otimes
u_l \otimes v_l$ where $u_l$, $l=1, \dots, j$ are the $j$ minimal
blocks.

If all elements are minimal, we say that $m_{\et}=n+1$; then since $\phi$ is $R$-normalized it will be assumed to vanish on~$\et$.

Suppose we can cobound on $\et$ such that $m_\et\ge K$. Let $\et$ be
such that $m_\et = K-1$ and consider $(ds+sd)\et$. In the notation
of \eqref{eq:dw1}, in $d(\et)$ there are terms with $d_c(v_l)$, $u_l
\cdot v_l$ and $v_l \cdot u_{l+1}$ which all have $K-1$ minimal
elements, and those with $d_c(u_l)$ which have $K-2$ minimal
elements. Applying $s$ increases the number of minimal elements by
one in all terms, and therefore by induction it suffices to
consider only those terms in $sd(\et)$ of the form
\[  (-1)^i u_1 \otimes
v_1 \otimes \cdots \otimes \lcu{x_i} \otimes d_c(u_l) \otimes
v_l\otimes \cdots \otimes  u_j \otimes v_j\]
where $x_i$ is the first element
of $u_l$ (and where we have used that if $[x]\preceq[y]$ then
$\lcu{xy}=\lcu{x}$). Note that $d_c$ is applied to $u_l$ as a
subtensor of $\et$.

Similarly, in $ds(\et)$, we only need to consider terms of the form
\[  (-1)^i u_1 \otimes
v_1 \otimes \cdots \otimes d'_c( \lcu{x_i} \otimes u_l) \otimes
v_l\otimes \cdots \otimes u_j \otimes v_j,\]
where $d'_c$ is applied to
$\lcu{x_i} \otimes u_l$ as a subtensor of $s_i(\et)$: this effectively
changes the signs when comparing to terms in $sd(\et)$. When
summing, all terms cancel except
\[ u_1 \otimes
v_1 \otimes \cdots \otimes \lcu{x_i} \cdot u_l \otimes v_l\otimes \cdots
\otimes u_j \otimes v_j\]
which is $\et$.
\end{proof}
\end{subsection}

\begin{subsection}{Without minimal elements}\label{s:without-minimum}
The procedure for handling tensors without minimal elements is much more involved. Crucial to our construction
is the following definition.

\begin{Definition}
Let $\et = \chain \in S^{n+1}$ be without minimal element. We
say that a subtensor $\subchain k{l}$ has a \dt{minimal left element} if
$[x_k] \preceq [x_i]$ for all $i$ in the cyclic interval $[k,\, l]$.
A subtensor is a \dt{left-block} if it has a minimal left element
and is not strictly included in another subtensor which has a
minimal left element.
\end{Definition}

Clearly a tensor $\et\in S^{n+1}$ can have at most $n+1$ left-blocks.
Note that a tensor $\et$ with a minimal element informally corresponds to
having only one left-block: extending the definition to this case
leads to confusion as the initial element of such a left-block may
not be well defined. Nevertheless, if $\et$ doesn't have at least two
left-blocks, then it has a minimal element.

We stress again that we consider tensors like $x_n \otimes x_{n+1}\otimes x_1 \otimes x_2$ as subtensors of $\et$ and therefore as potential left-blocks.

\begin{notn}
For $2\leq j \leq n+1$, denote by $\cF_n^j$ the set of all elementary tensors in $S^{n+1}$ with at most $j$ left-blocks. We write $\cF_n^1$ for the subset of elementary tensors with a minimal element.
\end{notn}

These subsets give us a filtration $\cF_n^1\subset \cF_n^2\subset \cdots \subset \cF_n^{n+1}=S^{n+1}$, where $\cF_n^{n+1}$ has dense linear span in $\Ho{C}{n}(\cA)$\/.
Crucially, each face map $\face_i:\Ho{C}{n}(\cA)\to\Ho{C}{n-1}(\cA)$ cannot increase the number of left-blocks, and hence maps $\cF_n^j$ to $\cF_{n-1}^j$.

We have seen in the previous sections that if $\psi$ is an $R$-normalized $n$-cocycle, it is equivalent in cyclic cohomology to one that vanishes on $\cF_n^1$.

\begin{thm}\label{t:reduction}
Let $2 \leq j \leq n+1$ and let $\psi\in \ZC^n(\cA)$. Suppose that $\psi$ vanishes on $\cF_n^{j-1}$. Then it is equivalent in cyclic cohomology to a cocycle that vanishes on $\cF_n^j$.
\end{thm}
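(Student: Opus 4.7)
The plan is to mimic the construction in Proposition~\ref{minimal in sight}, with ``left-block'' replacing ``minimal block''. Concretely, for an elementary tensor $\eta=y_1\otimes\cdots\otimes y_{m+1}$ whose number of left-blocks is exactly~$j$ and whose set of left-block starting positions is $J_\eta$, define
\[
s(\eta) \defeq \frac{1}{j}\sum_{i\in J_\eta}(-1)^{i}\,y_1\otimes\cdots\otimes y_{i-1}\otimes \lcu{y_i}\otimes y_i\otimes\cdots\otimes y_{m+1}\,,
\]
and set $s(\eta)=0$ on every other elementary tensor. Extending linearly yields a bounded map $s:\Ho{C}{m}(\cA)\to\Ho{C}{m+1}(\cA)$ at each degree~$m$, and one takes $\chi(\eta)\defeq\psi(s(\eta))$; the goal is to show that $\psi-\dif\chi$ vanishes on~$\cF_n^j$.

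A first check is that $\sigma=s^*$ preserves cyclicity, so that $\chi\in\CC^{n-1}(\cA)$. This follows the template of Lemma~\ref{l:our-first-mending-is-cyclic}: the key observation is that the notion ``position~$i$ starts a left-block of~$\eta$'' is equivariant under the cyclic shift, so a direct calculation gives $\cyc\circ s_i=(-1)^{m}\,s_{i+1}\circ\cyc$ (with the cyclic wrap at $i=m+1$ handled exactly as in that lemma), and the $1/j$ averaging is stable under relabelling of the starting positions.

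For $\et\in\cF_n^j$ with exactly $j$ left-blocks, I would then establish the two relations
\[ (sd+ds)\et - \et \in \lin(\cF_n^{j-1}) \qquad\text{and}\qquad ds(\et)\in\lin(\cF_n^{j-1}) . \]
For this, one expands each $ds_i(\et)$ and each $s_i(d\et)$ using the decomposition~\eqref{eq:dw1} of $d$ relative to the left-block partition of~$\et$, sorting the resulting terms by whether each face map acts in the interior of a left-block, at the junction between two consecutive left-blocks, or at an insertion site $i\in J_\et$. Following Proposition~\ref{minimal in sight}, the ``interior'' and ``junction'' face-map contributions to $sd\et$ and $ds\et$ cancel pairwise; the only surviving contributions in $ds\et$ come from face maps at the insertion sites, where the ``in-and-out'' contractions $\lcu{y_i}\cdot y_i=y_i$ sum after the $1/j$ normalisation to exactly~$\et$, and all remaining terms involve products that collapse via Lemma~\ref{l:units-trick} into tensors of strictly smaller left-block count. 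A parallel check yields $ds(\et)\in\lin(\cF_n^{j-1})$ outright. With both relations in place, the scheme at the end of Section~2 (with $E_0=\cF_n^{j-1}$ and $E_1=\cF_n^j$) furnishes the required cobounding.

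The main obstacle is the combinatorial book-keeping in the previous step. ``Minimal block'' in Proposition~\ref{minimal in sight} is defined by a global condition — being of the globally minimum degree of~$\et$ — which is rigid under local contractions, and this makes the cancellation pattern transparent. ``Left-block'', by contrast, is defined by a purely local maximality condition, and contraction of two elements straddling a left-block boundary produces an element of strictly smaller degree that can shift or merge several adjacent left-blocks. Tracking these reshapings, and verifying that every residual term really does have strictly smaller left-block count, demands a careful case analysis depending on the position of each face map relative to the insertion site, together with repeated use of Lemma~\ref{l:units-trick} to manipulate products of left-coherent units. Beyond this combinatorial work, no fundamentally new mechanism is required: the overall strategy is the inductive normalization already laid out at the end of Section~2.
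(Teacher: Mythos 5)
Your construction of the insertion map is essentially the paper's (except that the paper uses $s=\sum_{i\in I_\et}s_i$ without the $1/j$ averaging), and your cyclicity check is fine. The gap is in the central cancellation claim: it is simply not true that $(sd+ds)\et-\et\in\lin(\cF_n^{j-1})$ for $\et$ with $j$ left-blocks. What one actually gets (the paper's Proposition~\ref{p:modified-main}, whose proof occupies all of Section~\ref{s:IMGOINGSLIGHTLYMAD}) is
\[
(sd+ds)(\et)\equiv\sum_{i\in I_\et}\Bigl[\,\et+\filler{i-1}\tp\lcu{x_ix_{i+1}}\tp x_ix_{i+1}\tp\filler{n-i}-\filler{i-1}\tp\lcu{x_i}\tp x_ix_{i+1}\tp\filler{n-i}\Bigr]
\]
only modulo $G_{n,j,h-1}$, i.e.\ modulo terms that may still have $j$ left-blocks but lower \emph{height} --- a filtration parameter your argument does not introduce at all. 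Worse, among the surviving bracketed terms there are three obstructions that Lemma~\ref{l:units-trick} cannot collapse: (1) if $x_i$ is a block-unit, the third term reproduces $\et$ itself, so the coefficient of $\et$ comes out as $j-r$ (with $r$ the number of block-units) rather than $j$, and $r$ varies with $\et$; (2) if $i\in\sR_\et$ (a one-block, not a block-unit, lying strictly above its successor), the term $\filler{i-1}\tp\lcu{x_i}\tp x_ix_{i+1}\tp\filler{n-i}$ has exactly the same shape as $\et$ --- same number of left-blocks, same height --- and lies nowhere near $\cF_n^{j-1}$; (3) if $x_i$ is a one-block whose degree is incomparable with $[x_{i+1}]$, the corresponding term can retain $j$ left-blocks and merely drop in height. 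In addition, the wrap-around case $n+1\in I_\et$ only cancels up to cyclic equivalence, which is precisely why the reduction must be run in cyclic cohomology (Corollary~\ref{c:now-we-need-cyclic}).

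Because of (1)--(3), a single application of the scheme from the end of Section~2 cannot work, and the paper's proof is forced into machinery your proposal omits entirely: the height filtration $G_{n,j,h}$ and $H_{n,j,h}$, the notion of block-unit and the error map $\err$, the descent statistic with Lemma~\ref{l:tombant}, and the family of operators $P_k=I-k^{-1}(sd+ds)$ for $k=1,\dots,j$ (needed because the unknown number $r$ of block-units means no single normalisation, in particular not your fixed $1/j$, annihilates the $\et$-component), iterated as $(P_j\dotsb P_1)^N$ with $N=j(n-1)n^2$ to push everything down first in descent, then in height, and finally into $\lin\cF_n^{j-1}$ up to cyclic equivalence. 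Your closing paragraph correctly senses that the book-keeping is the hard part, but the difficulty is not just book-keeping: without the height/descent inductions and the $P_k$ family, the residual same-$j$ terms never go away, so the proposed one-step cobounding fails. (Your auxiliary claim that $ds(\et)\in\lin(\cF_n^{j-1})$ is also unsubstantiated and is not what the argument needs.)
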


Theorem~\ref{t:reduction} will allow us, by an inductive argument, to conclude that if $\psi$ is an $R$-normalized cocycle, there exists a cyclic cochain $\phi$ such that $\psi=\dif\phi$. The proof of this theorem will take up the rest of this section and the following one.
\medskip
\begin{notn}

For elementary tensors without a minimal element, it is easy to see that any tensor $\et$ has a unique decomposition into left-blocks, and therefore we can define
\[ I_\et = \{ i \in
\{1,2,\ldots,n+1\} \;:\; \text{$x_i$ is the first component of a left-block.}\}. \]
$I_\et$ is the set of the indices of all \emph{initial} points of left-blocks.
\end{notn}


As in Section~\ref{s:with-minimum}, we shall now define an insertion operator in terms of this block structure.
(This operator will also be denoted by $s$, but this abuse of notation should not cause any confusion with the insertion operator that was considered in Section~\ref{s:with-minimum}.)
Given an elementary tensor $\et\in \Ho{C}{n}(\cA)$ and $i\in I_\et$, define
\[ s_i(\et) = (-1)^{i} \left(
\filler{i-1}
\otimes \lcu{x_i}\otimes x_i \tp
\filler{n-i}\right) \in\Ho{C}{n+1}(\cA)
.\]
and then define $s : \Ho{C}{n}(\cA)\to\Ho{C}{n+1}(\cA)$ by
\begin{equation}
s(\et) =  \sum_{i\in I_\et} s_i(\et).
\end{equation}
If there are no left-blocks, ($I_\et$ is empty), set $s(\et)= 0$.

Dualizing this operator, we define $\sigma :
\SC{n+1}{\cA}\to\SC{n}{\cA}$ on $\phi\in\SC{n+1}{\cA}$ by
\begin{equation}
\sigma\phi(\et) = \phi (s(\et)).
\end{equation}
The proof of Lemma \ref{l:our-first-mending-is-cyclic} shows that we
also have:

\begin{Lemma}\label{l:our-second-mending-is-cyclic}
If $\phi$ is cyclic then so is $\sigma\phi$.
\end{Lemma}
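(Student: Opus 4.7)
The plan is to mirror the proof of Lemma~\ref{l:our-first-mending-is-cyclic}. The key structural observation is that the left-block decomposition of an elementary tensor $\et\in S^{n+1}$ is equivariant under the signed cyclic shift $\cyc$. Indeed, the defining property of a left-block (possession of a minimal left element) depends only on the cyclic ordering of entries and their degrees in $L$, and makes no reference to the chosen starting index. Consequently $i\in I_\et$ if and only if $i+1 \in I_{\cyc(\et)}$ (with indices read modulo $n+1$), and the map $i\mapsto i+1\pmod{n+1}$ provides a bijection $I_\et\to I_{\cyc(\et)}$.

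With this equivariance in place, I would verify by a direct computation on elementary tensors that
\[ \cyc(s_i(\et)) = (-1)^n\, s_{i+1}(\cyc(\et)) \quad\text{for } i\in I_\et \text{ with } 1\leq i\leq n, \]
and, for the wrap-around case $i=n+1\in I_\et$,
\[ \cyc^2(s_{n+1}(\et)) = (-1)^n\, s_1(\cyc(\et)). \]
Both identities follow by unwinding the definitions exactly as in Lemma~\ref{l:our-first-mending-is-cyclic}; the only difference is that the inserted segment is now $\lcu{x_i}\tp x_i$ rather than $\lcu{x_i}$, a change that has no effect on the sign bookkeeping since the extra factor of $x_i$ is simply shifted along with $\lcu{x_i}$.

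Summing these identities over all $i\in I_\et$, and using the bijection $I_{\cyc(\et)}=\{i+1\pmod{n+1}\st i\in I_\et\}$ to re-index, one obtains an equality between $\phi(s(\cyc(\et)))$ and $\phi(s(\et))$ whenever $\phi\in\CC^{n+1}(\cA)$, since cyclicity of $\phi$ gives $\phi\circ\cyc = \phi$ and hence also $\phi\circ\cyc^2=\phi$. Thus
\[ \sigma\phi(\cyc(\et)) \;=\; \phi(s(\cyc(\et))) \;=\; \phi(s(\et)) \;=\; \sigma\phi(\et) \]
on every elementary tensor $\et$, and by linearity and continuity $\sigma\phi$ is cyclic.

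The only real work is careful tracking of the signs produced by $\cyc$ acting on an $(n+2)$-fold tensor, together with the minor case-split between $i\leq n$ and $i=n+1$. No conceptual obstacle arises beyond what was already dealt with in Lemma~\ref{l:our-first-mending-is-cyclic}.
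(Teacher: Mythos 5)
Your argument is in substance the paper's own: the paper disposes of this lemma by noting that the proof of Lemma~\ref{l:our-first-mending-is-cyclic} carries over verbatim, and what you write is exactly that proof --- cyclic equivariance of the block decomposition (here left-blocks instead of minimal blocks), the commutation identities between $\cyc$ and the insertions $s_i$, and then cyclicity of $\phi$. Two points of correction, one of which matters. The displayed identities carry a spurious sign: since $\cyc(\et)=(-1)^n(x_{n+1}\tp x_1\tp\cdots\tp x_n)$ and $s_{i+1}$ is linear, the computation (identical to the one in Lemma~\ref{l:our-first-mending-is-cyclic}) gives $\cyc s_i(\et)=(-1)^n s_{i+1}(x_{n+1}\tp x_1\tp\cdots\tp x_n)=s_{i+1}(\cyc(\et))$ for $1\leq i\leq n$, and likewise $\cyc^2 s_{n+1}(\et)=s_1(\cyc(\et))$, i.e.\ \emph{without} the extra factor $(-1)^n$ when the argument on the right is the signed shift $\cyc(\et)$ rather than the unsigned rotation. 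If one takes your identities literally and sums over $i\in I_\et$, the final computation yields $\sigma\phi(\cyc(\et))=(-1)^n\sigma\phi(\et)$, which is not cyclicity when $n$ is odd; dropping the extraneous sign (equivalently, writing the right-hand sides with the unsigned rotation, as the paper does) restores the intended conclusion $\sigma\phi(\cyc(\et))=\sigma\phi(\et)$.

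Second, and harmlessly: your remark that the inserted segment is now $\lcu{x_i}\tp x_i$ rather than $\lcu{x_i}$ describes a non-difference. In both sections $s_i$ inserts $\lcu{x_i}$ immediately before $x_i$, leaving $x_i$ in place; the only genuine changes are the index set ($I_\et$ in place of $J_\et$) and the absence of the $1/j$ normalisation, and neither affects the argument, precisely because the left-block decomposition, like the minimal-block one, is defined via cyclic subtensors and is therefore equivariant under rotation, as you correctly observe.
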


\medskip
Two parameters will be important for our approach in this section and the next: the \dt{degree} of a left-block; and the \dt{height} of a elementary tensor.
The degree of a (sub)tensor was defined earlier (Definition~\ref{dfn:degree}): note that the degree of a left-block will be the same as the degree of its initial element.

\begin{dfn}
If $T$ is a finite semilattice, and $\al\in T$, the \dt{height of $\al$ in $T$} is the length of the longest descending chain in $T$ which starts at $\al$. That is,
\[
 \high_{T}(\al) = \sup\{ m \;:\; \text{there exist $t_0,\dots, t_m\in T$ with
 $\al=t_m \succ t_{m-1} \succ\dots \succ t_0$} \}.
\]

If $\et=x_1\tp \cdots\tp x_{n+1}$ is an elementary tensor in $\Ho{C}{n}(\cA)$, let $L(\et)$ be the subsemilattice of $L$ that is generated by the set $\{ [x_1],\dots, [x_{n+1}]\}$, and define the \dt{height of $\et$} to be
\[ \high(\et) \defeq \sum_{i=1}^{n+1} \high_{L(\et)} ([x_i]). \]
\end{dfn}


Denote by $\cF_n^{j,h}$ the set of elementary tensors with at most $j$ left-blocks and with height at most~$h$.
Note for later reference that, if $\et$ has no minimum element, then there are crude bounds
\[ n+1 \leq \high(\et) \leq n(n+1). \]
We now define linear spaces which will be key to our induction:
\begin{equation}\label{eq:define_spaces}
G_{n, j, h}=\lin \cF_n^{j,h} + \lin \cF_n^{j-1}
\quad\text{ and }\quad
H_{n, j, h}=(I-\cyc)\Ho{C}{n}(\cA) + G_{n,j,h}.
\end{equation}

\begin{lem}
Let $T$ be a finite semilattice and let $F\subseteq T$ be a sub\-semilattice.
\begin{YCnum}
\item If $\al\in F$ then $\high_F(\al)\leq\high_T(\al)$\/;
\item If $\al,\beta\in T$ and $\al \prec \beta$ then $\high_T(\al)< \high_T(\beta)$.
\end{YCnum}
\end{lem}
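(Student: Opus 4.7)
Both parts should follow directly from the definition of height as a supremum of lengths of strictly descending chains, together with the fact that $T$ is finite (so the supremum is attained). There are no hidden subtleties here; my plan is essentially to unwrap definitions.

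For part (i), the key observation is that a strictly descending chain in $F$ starting at $\al$ is automatically a strictly descending chain in $T$ starting at $\al$, since the partial order on $F$ (induced by $\al\preceq\beta \iff \al\beta=\al$) coincides with the restriction of the partial order on $T$. Thus the set of admissible lengths $m$ in the definition of $\high_F(\al)$ is a subset of that appearing in the definition of $\high_T(\al)$, and the inequality $\high_F(\al)\leq \high_T(\al)$ follows by taking suprema.

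For part (ii), I would use finiteness of $T$ to produce a witnessing chain for $\al$, then prepend $\beta$. Explicitly, let $m=\high_T(\al)$ and choose $t_0,\dots,t_m\in T$ with $\al=t_m\succ t_{m-1}\succ\cdots\succ t_0$. Since $\beta\succ\al=t_m$, the sequence $\beta,t_m,\dots,t_0$ is a strictly descending chain of length $m+1$ in $T$ starting at $\beta$, so $\high_T(\beta)\geq m+1>m=\high_T(\al)$.

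Since both parts are immediate from the definition, the only ``obstacle'' worth flagging is bookkeeping: one should note that finiteness of $T$ ensures the supremum defining $\high_T$ is a maximum, so that part~(ii) genuinely produces a chain to prepend to (rather than merely an approximating sequence of chains). A brief remark to this effect suffices.
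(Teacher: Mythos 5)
Your proof is correct and is exactly the routine verification the paper has in mind: the paper simply states ``the proofs of both parts are clear'' and omits any argument, and your chain-restriction argument for (i) and chain-prepending argument for (ii) (with finiteness guaranteeing the supremum is attained) are the intended reasoning. No discrepancy to report.
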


The proofs of both parts are clear.

\begin{prop}\label{p:modified-main}
Let $\et=\subchain{1}{n+1}$ be an elementary tensor in $\cF_n^{j,h}$. Then
\begin{equation}\label{eq:almost}
(sd+ds)(\et) \equiv \sum_{i\in I_\et} \left[ \;
\begin{aligned}
    \et
  & \;+\; \filler{i-1} \tp \lcu{x_ix_{i+1}} \otimes x_ix_{i+1} \tp \filler{n-i} \\
  & \;-\; \filler{i-1} \otimes \lcu{x_i} \otimes x_i x_{i+1} \tp \filler{n-i}
\end{aligned}
\; \right] \mod G_{n,j,h-1}.
\end{equation}
\end{prop}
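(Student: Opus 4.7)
The plan is to expand $(ds+sd)(\et)$ using the face maps $\face_k$ and the commutation identities \eqref{eq:initial-terms}--\eqref{eq:tail-terms} from Section~\ref{s:h-unital}. Specifically, I would write
\[
ds(\et) \;=\; \sum_{i\in I_\et}\sum_{k=1}^{n+1}(-1)^k\face_k s_i(\et),
\qquad
sd(\et) \;=\; \sum_{j=1}^{n}(-1)^j\!\!\sum_{i''\in I_{\face_j(\et)}}\!\! s_{i''}(\face_j(\et)),
\]
and separate each double sum into \emph{local} terms (where $\face_k$ interacts directly with the inserted $\lcu{x_i}$) and \emph{far} terms (where the two operators commute up to an index shift). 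This is essentially the $Q_k$-bookkeeping of Section~\ref{s:h-unital}, adapted to the fact that $s$ is block-selective rather than summed over every position.

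For fixed $i\in I_\et$, the three local terms $k=i-1,\,i,\,i+1$ in $ds_i(\et)$ reduce, using $\lcu{x_i}x_i=x_i$, to
\[
- \filler{i-2}\tp x_{i-1}\lcu{x_i}\tp x_i\tp\filler{n-i} \;+\; \et \;-\; \filler{i-1}\tp\lcu{x_i}\tp x_ix_{i+1}\tp\filler{n-i-1}.
\]
The middle summand is already the $+\et$ contribution in~\eqref{eq:almost}, and the rightmost is the third summand of~\eqref{eq:almost}. The remaining values of $k$ produce far terms of the shape $\pm s_{i'}\face_{j}(\et)$ with $i'=i$ or $i-1$, by~\eqref{eq:initial-terms} and~\eqref{eq:tail-terms}.

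Next I would match these far terms against contributions to $sd(\et)$. For pairs $(i,j)$ with $j\notin\{i-1,i,i+1\}$ the face $\face_j$ leaves the left-block structure at position $i$ intact, so $i$ (possibly shifted by one) still lies in $I_{\face_j(\et)}$, and the resulting $s_{i''}\face_j(\et)$ cancels the corresponding far term from $ds_i(\et)$. The only contribution to $sd(\et)$ not consumed by this pairing is the $j=i$ term, which inserts $\lcu{x_ix_{i+1}}$ at position $i$ and produces precisely
\[
+\,\filler{i-1}\tp\lcu{x_ix_{i+1}}\tp x_ix_{i+1}\tp\filler{n-i-1},
\]
the second summand of~\eqref{eq:almost}.

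It remains to check that the residual terms lie in $G_{n,j,h-1}$. The sources are: first, the local term $-\filler{i-2}\tp x_{i-1}\lcu{x_i}\tp x_i\tp\filler{n-i}$, where $i\in I_\et$ forces $[x_{i-1}]$ and $[x_i]$ to be incomparable in $L(\et)$, so that $[x_{i-1}\lcu{x_i}]=[x_{i-1}][x_i]\prec[x_{i-1}]$ and the height drops strictly; second, the $sd$-contribution at $j=i-1$, which merges two adjacent left-blocks and so produces a tensor in $\cF_n^{j-1}\subseteq G_{n,j,h-1}$; and third, any mismatched far terms arising when $\face_j$ unexpectedly disrupts the block structure (for instance when $[x_j][x_{j+1}]\prec [x_j],[x_{j+1}]$), which a direct computation invoking Lemma~\ref{l:units-trick} shows cause a strict decrease either in number of blocks or in height. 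I anticipate that the main obstacle will be the delicate sign and reindexing bookkeeping in the previous paragraph: for each pair $(i,j)\in I_\et\times\{1,\dots,n\}$ outside the local regime, one must track carefully which of $s_{i-1}$ or $s_i$ appears, the sign $(-1)^{i+k}$ from~\eqref{eq:initial-terms}/\eqref{eq:tail-terms}, and the sign $(-1)^j$ from $sd$, and then confirm that everything cancels simultaneously.
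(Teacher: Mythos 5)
Your overall architecture matches the paper's proof of Proposition~\ref{p:modified-main} (expand $sd+ds$ into face maps, read off the three summands of \eqref{eq:almost} from the near-diagonal terms $\face_{k-1}s_k$, $\face_ks_k$, $\face_{k+1}s_k$ and $s_i\face_i$, and cancel the far terms), but the step on which everything hinges is asserted on false grounds. You claim that for $j\notin\{i-1,i,i+1\}$ ``the face $\face_j$ leaves the left-block structure at position $i$ intact, so $i$ (possibly shifted by one) still lies in $I_{\face_j(\et)}$'', and hence the far terms of $ds$ and $sd$ cancel in pairs. This is exactly what the warning example in Section~\ref{s:IMGOINGSLIGHTLYMAD} refutes: there $I_\et=\{1,3,5,6,7\}$ while $I_{\face_4(\et)}=\{4\}$, so a face map acting far from position $i$ can destroy (or, after a boundary merge, relocate) initial points, and the sum defining $sd(\et)$ runs over $I_{\face_j(\et)}$, not over $I_\et$. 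Consequently the pairing is not one-to-one: far terms of $ds_i(\et)$ may have no partner in $sd(\et)$, and one must show that every such orphan lies in $\lin\cF_n^{j-1}$ or has lower height. Your third catch-all bullet gestures at this, but ``a direct computation invoking Lemma~\ref{l:units-trick}'' is not the relevant tool -- Lemma~\ref{l:units-trick} only concerns the identities $\lcu{\lcu{y}}=\lcu{y}$ and $\lcu{y}\lcu{yz}=\lcu{yz}$. What is actually needed is a combinatorial criterion for when a face map reduces the number of left-blocks and when an initial point survives; in the paper this is Lemma~\ref{l:when-pinching-reduces} (the ``$1$-block or dead spot'' dichotomy) together with the bijection between $\SD_j^*$ and $\DS_j^*$, and it is the technical heart of the proof, not a routine verification.

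Two further points. First, your treatment of the remaining $sd$ contributions is too coarse: the term at $p=i-1$ does \emph{not} always land in $\cF_n^{j-1}$ (by Lemma~\ref{l:when-pinching-reduces}, $\face_{i-1}(\et)$ loses a block only if $x_{i-1}$ is a $1$-block or there is a dead spot); when it keeps $j$ blocks its diagonal piece $s_{i-1}\face_{i-1}(\et)$ must instead be absorbed via the height-drop of Lemma~\ref{l:Donna}, and its off-diagonal pieces via the cancellation above -- this is what Lemma~\ref{l:Noble} organises, including the case $(i,i)\in\SD_j$ with $i\notin I_\et$ but $i+1\in I_\et$. You also ignore the cyclic wrap-around entirely (the face $\face_0$, the initial point $n+1$, and the edge terms corresponding to $R_{\DS}$ and $R_{\SD}$, which is where the interpretation \eqref{eq:different-at-edge} and cyclic shifts enter). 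Second, a small error: for $i\in I_\et$ the degrees $[x_{i-1}]$ and $[x_i]$ need not be incomparable (one can have $[x_{i-1}]\succ[x_i]$); the correct statement, and all you need, is $[x_{i-1}]\not\preceq[x_i]$, which already gives $[x_{i-1}x_i]\prec[x_{i-1}]$ and the strict height drop.
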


\FGedit{To be checked as the current proofs in section 6 use cyclic shifts.}

\begin{rem}
When $n+1\in I_\et$, the corresponding term in square brackets should be interpreted as
\begin{equation}\label{eq:different-at-edge}
\et \;+\; (-1)^n \subchain{2}{n} \tp \lcu{x_{n+1}x_1}\tp x_{n+1}x_1  \;+\; (-1)^{n+1} \subchain{2}{n} \tp \lcu{x_{n+1}}\tp x_{n+1} x_1 \,.
\end{equation}
\end{rem}

The proof of Proposition~\ref{p:modified-main} is rather long, and will therefore be deferred to Section~\ref{s:IMGOINGSLIGHTLYMAD}.
Let us assume, for the moment, that the proposition holds; we shall show, under this hypothesis, how Theorem~\ref{t:reduction} can be proved.

For $k=1,\dots,j$, let $P_k = I - k^{-1}(sd+ds)$.
By construction, if $\psi\in \ZC^n(\cA)$ then $\psi- P_k^*\psi= k^{-1}\dif\sigma\psi \in \BC^n(\cA)$, and so applying $P_k^*$ to a cyclic cocycle does not change its cyclic cohomology class.
Proposition~\ref{p:modified-main} suggests that, by repeatedly applying $P_k$ to an elementary tensor in $\cF_n^j$, for varying $k$, one would eventually obtain a linear combination of tensors in $\cF_n^{j-1}$.
To prove that this hope can be realized -- at least, if we work up to cyclic equivalence, see Definition~\ref{dfn:cyc-equiv} -- we must analyze the surviving terms in \eqref{eq:almost} in more detail.
Left-blocks of length one will play a special role and we adopt the following definitions.

\begin{Definition}\label{dfn:block-unit}
A left-block of length one is called a \dt{one-block}.
A one-block $x_k$  in an elementary tensor $\et$ is called a \dt{block-unit} if
$x_k=\lcu{x_k}$ and $x_kx_{k+1}=x_{k+1}$\/.
\end{Definition}

\begin{Remark}\label{rem:not-all-block-units}
Since block-units are left-blocks of length one, there are certainly no more than $j-1$ of them when $j<n+1$. In fact,
if $j=n+1$, then this is still true: for if there were only block-units, the degree of each would lie above that of the following block-unit, and hence the tensor would have a minimal element (and therefore no left-blocks). 
\end{Remark}

Given an elementary tensor $\et=\subchain{1}{n+1}$, let
\[ \sR_\et=\{ i \;:\; \text{ $x_i$ is a one-block but not a block-unit, and $[x_i]\succ [x_{i+1}]$} \}, \]
where we allow $n+1\in \sR_{\et}$. Clearly $\sR_\et$ is a proper subset of the set $I_\et$ of all initial points of left-blocks; it may even be empty.

\begin{lem}\label{l:picky}
Let $\et=\subchain{1}{n+1}$ be an elementary tensor in $\cF_n^j$, and let $i\in I_\et$. Then precisely one of the following four cases can occur:
\begin{YCnum}
\item\label{item:cancel}
 $x_i$ is not a one-block in $\et$, in which case,
\begin{equation}\label{eq:cancel}
    \filler{i-1} \tp \lcu{x_ix_{i+1}} \otimes x_ix_{i+1} \tp \filler{n-i}
    \quad = \quad
    \filler{i-1} \otimes \lcu{x_i} \otimes x_i x_{i+1} \tp \filler{n-i}\,;
\end{equation}
\item\label{item:drop}
 $x_i$ is a one-block and $[x_i]\not\succeq [x_{i+1}]$, in which case, the tensor
\begin{equation}\label{eq:drop}
\filler{i-1}\otimes\lcu{x_i}\otimes x_ix_{i+1}\tp\filler{n-i}
\end{equation}
either has fewer left-blocks, or lower height (and the same number of left-blocks), than~$\et$\/.
\item\label{item:BU}
 $x_i$ is a block-unit, in which case $\filler{i-1}\otimes\lcu{x_i}\otimes x_ix_{i+1}\tp\filler{n-i}=\et$\/;
\item\label{item:ELSE} $i\in \sR_\et$.
\end{YCnum}
In cases \ref{item:drop}, \ref{item:BU} and \ref{item:ELSE}, the tensor
$\filler{i-1} \tp \lcu{x_ix_{i+1}} \otimes x_ix_{i+1} \tp \filler{n-i}$ has fewer left-blocks than~$\et$.
\end{lem}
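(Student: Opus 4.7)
The plan is to organize the proof around whether $x_i$ is a one-block and, if so, the relation between $[x_i]$ and $[x_{i+1}]$. Since $i\in I_\et$, the left-block starting at~$i$ has length at least two iff $[x_i]\preceq[x_{i+1}]$, which is case~(i); otherwise $x_i$ is a one-block and $[x_i]\not\preceq[x_{i+1}]$, where case~(ii) is the sub-case with $[x_i]$ and $[x_{i+1}]$ incomparable, while $[x_i]\succ[x_{i+1}]$ splits into~(iii) or~(iv) according to whether $x_i$ is a block-unit. (Equality $[x_i]=[x_{i+1}]$ is excluded by $[x_i]\not\preceq[x_{i+1}]$.) Cases~(i) and~(iii) are essentially routine: in~(i), the coherence identity $\lcu{xy}=\lcu{x}$ (valid for $[x]\preceq[y]$) applied with $x=x_i$, $y=x_{i+1}$ yields $\lcu{x_ix_{i+1}}=\lcu{x_i}$, so the two tensors in \eqref{eq:cancel} coincide; in~(iii), the defining equalities $\lcu{x_i}=x_i$ and $x_ix_{i+1}=x_{i+1}$ of a block-unit substitute directly to give the claimed identity. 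Case~(iv) carries no individual claim.

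The substantive work of the first half is case~(ii). Write $\et'=\filler{i-1}\tp\lcu{x_i}\tp x_ix_{i+1}\tp\filler{n-i}$. Incomparability of $[x_i]$ and $[x_{i+1}]$ forces $[x_ix_{i+1}]=[x_i][x_{i+1}]\prec[x_{i+1}]$ strictly, while $[\lcu{x_i}]=[x_i]$, so the degrees of $\et'$ agree with those of $\et$ except at position~$i+1$, where they strictly decrease. I would then verify that the left-blocks of $\et'$ sitting in positions $1,\ldots,i-1$ coincide with those of~$\et$ (the degrees there are unchanged, and the lowered degree at $i+1$ can only hinder any forward extension of a block ending at $i-1$), that $\lcu{x_i}$ remains a one-block of~$\et'$ (since $[x_i]\not\preceq[x_ix_{i+1}]$), and that the left-block of~$\et'$ starting at~$i+1$ extends at least as far right as its counterpart in~$\et$, because its minimal left element $[x_ix_{i+1}]$ is smaller. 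Either the extension is strict, reducing the left-block count, or it is equal; in the latter case the height strictly decreases, by combining the monotonicity $\high_{L(\et')}\le\high_{L(\et)}$ (which holds since $L(\et')\subseteq L(\et)$) with the strict inequality $\high_{L(\et)}([x_ix_{i+1}])<\high_{L(\et)}([x_{i+1}])$.

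Finally, for the joint claim about $\et''=\filler{i-1}\tp\lcu{x_ix_{i+1}}\tp x_ix_{i+1}\tp\filler{n-i}$ in cases~(ii), (iii), (iv): in all three, $x_i$ is a one-block, so $[x_ix_{i+1}]\prec[x_i]$, and both positions~$i$ and~$i+1$ of~$\et''$ carry the same degree $[x_ix_{i+1}]$, which forces them into a single left-block. In~$\et$ they lay in distinct left-blocks (since $x_i$ was a one-block, so $i+1\in I_\et$), so at least one left-block is absorbed. The structure at positions $1,\ldots,i-1$ remains unchanged (by the same reasoning as in case~(ii), since $[x_k]\not\preceq[x_i]$ implies $[x_k]\not\preceq[x_ix_{i+1}]$), so no new blocks are created, and the number of left-blocks in~$\et''$ is strictly less than in~$\et$. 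The main obstacle I anticipate is case~(ii): disentangling the two distinct effects of lowering a single coordinate's degree (a drop in block count versus a drop in height), together with the possibility that $L(\et')$ is a proper subsemilattice of~$L(\et)$. The monotonicity of $\high$ in the ambient semilattice is the key ingredient that keeps the bookkeeping clean.
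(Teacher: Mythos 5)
Your argument is correct and follows essentially the same route as the paper's proof: the same case split on whether $x_i$ is a one-block, on comparability of $[x_i]$ with $[x_{i+1}]$, and on the block-unit dichotomy, with left-coherence of $\lcu{\cdot}$ giving case (i), the subsemilattice height monotonicity giving the height drop in case (ii), direct substitution giving case (iii), and the merging of the left-blocks starting at positions $i$ and $i+1$ giving the final claim. Your extra bookkeeping on the left-block count in case (ii) is sound (indeed it supplies the parenthetical ``same number of left-blocks'' that the paper's proof leaves implicit); the one assertion you state without verification --- that a strict extension of the block starting at $i+1$ forces a drop in the block count --- is easily justified, since once the new head degree $[x_ix_{i+1}]$ lies below the head of the following left-block it lies below every entry of that block by transitivity, so the extended block absorbs entire left-blocks of the original tensor and no downstream splitting can occur.
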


\begin{proof}
If $x_i$ is not a one-block, then since $i$ is initial we must have $[x_i]\preceq[x_{i+1}]$. Equation~\eqref{eq:cancel} then follows from the left-coherent property of the function $\lcu{\cdot}$, as described in Section~\ref{temporary-marker}.

If $x_i$ is a one-block, we split into two cases. The first is when $[x_i]$ does not lie above $[x_{i+1}]$, i.e.~case \ref{item:drop} of the lemma. In this case, $x_ix_{i+1}$ has strictly smaller degree than $x_{i+1}$, and so \eqref{eq:drop} has
height at most
\[ \high(\et) - \high_{L(\et)}([x_{i+1}]) + \high_{L(\et)}([x_ix_{i+1}]) < \high(\et) \]
as claimed.
The second case is when $[x_i]\succeq [x_{i+1}]$ (note that, since $x_i$ is assumed here to be a one-block, it then has to lie \emph{strictly} above $x_{i+1}$). There are now two subcases: either $x_i$ is a block-unit, in which case the claim in \ref{item:BU} follows immediately from the definition of a block-unit (and the fact that $\lcu{\lcu{\cdot}}=\lcu{\cdot}$); or else it is not, in which case $i$ is by definition a member of $\sR_\et$, so that we are in case~\ref{item:ELSE}.

Finally: if we are not in case \ref{item:cancel}, i.e.~if $x_i$ is a one-block, then
 $\filler{i-1} \tp \lcu{x_ix_{i+1}} \otimes x_ix_{i+1} \tp \filler{n-i}$
 clearly has fewer left-blocks than~$\et$ (more precisely, the left-blocks which started in position $i$ and position $i+1$ have merged).
\end{proof}

The previous lemma motivates the following notation.
Define a map $\err$ on elementary tensors by
\begin{equation}\label{eq:error-terms}
\err(\et) =  \sum_{i\in \sR_{\et}}\; \filler{i-1} \tp \lcu{x_i} \tp x_ix_{i+1} \tp \filler{n-i}
\end{equation}
where if $\sR_{\et}=\emptyset$ we put $\err(\et)\defeq 0$; and extend $\err$ by linearity and continuity to a bounded linear map on $\Ho{C}{n}(\cA)$.
It is easily checked from the definitions in \eqref{eq:define_spaces} that $\err$ maps $G_{n,j,h}$ into itself, and hence maps $H_{n,j,h}$ into itself.

\begin{cor}\label{c:now-we-need-cyclic}
If $\et=\subchain{1}{n+1}$ is an elementary tensor with height $h$, and with $j$ left-blocks, exactly $r$ of which are block-units, then
\begin{equation}\label{eq:Brel} 
P_k(\et) \equiv \left(1-\frac{j-r}{k}\right)\et + \frac{1}{k}\err(\et) \ \mod H_{n,j,h-1}\,.
\end{equation}
\end{cor}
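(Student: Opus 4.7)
The plan is to unfold the definition of $P_k$ using Proposition~\ref{p:modified-main} and then classify each surviving term in the resulting sum via the case analysis of Lemma~\ref{l:picky}. Since $G_{n,j,h-1} \subseteq H_{n,j,h-1}$, it is in fact enough to prove the congruence modulo $G_{n,j,h-1}$; no use of cyclic equivalence is needed at this step.

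For each $i \in I_\et$, write the bracketed summand in \eqref{eq:almost} as $\et + A_i - B_i$, where $A_i = \filler{i-1}\tp\lcu{x_ix_{i+1}}\tp x_ix_{i+1}\tp\filler{n-i}$ and $B_i = \filler{i-1}\tp\lcu{x_i}\tp x_ix_{i+1}\tp\filler{n-i}$. By Lemma~\ref{l:picky}, each such $i$ falls into exactly one of four cases. In cases (i) and (ii) we have either $A_i = B_i$ or else both $A_i, B_i \in G_{n,j,h-1}$, so the $i$-th summand is $\equiv \et$ modulo $G_{n,j,h-1}$. In case (iii), $x_i$ is a block-unit: then $B_i = \et$ while $A_i$ has strictly fewer left-blocks and hence lies in $G_{n,j,h-1}$, so the $i$-th summand is $\equiv 0$. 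In case (iv), which is exactly the set $\sR_\et$, the tensor $A_i$ lies in $G_{n,j,h-1}$ while $B_i$ is by construction the $i$-th summand of $\err(\et)$, so the $i$-th summand is $\equiv \et - B_i$.

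Since case (iii) supplies precisely $r$ indices and case (iv) supplies $|\sR_\et|$ indices, the remaining $j - r - |\sR_\et|$ indices of $I_\et$ land in cases (i)/(ii). Summing the contributions,
\[
(sd+ds)(\et) \equiv (j - r - |\sR_\et|)\et \;+\; |\sR_\et|\et \;-\; \err(\et) \;=\; (j-r)\et - \err(\et) \pmod{G_{n,j,h-1}}.
\]
Dividing by $k$ and subtracting from $\et$ gives $P_k(\et) \equiv \bigl(1 - (j-r)/k\bigr)\et + k^{-1}\err(\et)$ modulo $G_{n,j,h-1}$, and a fortiori modulo $H_{n,j,h-1}$, as required. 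There is no real obstacle here: the argument is essentially bookkeeping, with all of the substantive content already absorbed into Proposition~\ref{p:modified-main} (whose proof is deferred) and the case analysis of Lemma~\ref{l:picky}; the only care needed is in verifying that each case of Lemma~\ref{l:picky} places the terms $A_i$ and $B_i$ where the above summation requires them to be.
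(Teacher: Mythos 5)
Your bookkeeping for the indices $i\in I_\et$ with $1\leq i\leq n$ is exactly the paper's argument: split into the four cases of Lemma~\ref{l:picky}, note that cases (i)/(ii) contribute $\et$ modulo lower terms, case (iii) contributes $0$, and case (iv) contributes $\et$ minus the corresponding term of $\err(\et)$, then divide by $k$. However, your opening claim -- that it suffices to work modulo $G_{n,j,h-1}$ and that ``no use of cyclic equivalence is needed at this step'' -- is a genuine gap, and it conceals precisely the point this corollary is named for. When $n+1\in I_\et$, the bracketed summand in \eqref{eq:almost} is \emph{not} of your normal form $\et + A_{n+1}-B_{n+1}$: by the Remark following Proposition~\ref{p:modified-main} it must be read as \eqref{eq:different-at-edge}, in which the two inserted tensors appear cyclically shifted to the end. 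If $x_{n+1}$ is a block-unit, the third term of \eqref{eq:different-at-edge} is $(-1)^{n+1}x_2\tp\cdots\tp x_{n+1}\tp x_1$, which is \emph{not} equal to $-\et$; it only satisfies $\cyc(x_2\tp\cdots\tp x_{n+1}\tp x_1)=(-1)^n\et$, so the cancellation against $\et$ holds only modulo $(I-\cyc)\Ho{C}{n}(\cA)$, and likewise the middle term is only \emph{cyclically equivalent} to a tensor in $\cF_n^{j-1}$. Similarly, if $n+1\in\sR_\et$ (which the paper explicitly allows), the surviving term $(-1)^{n+1}\subchain{2}{n}\tp\lcu{x_{n+1}}\tp x_{n+1}x_1$ is only a cyclic shift of the $i=n+1$ contribution to $\err(\et)$ as written in \eqref{eq:error-terms}. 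So the congruence \eqref{eq:Brel} really does require the space $H_{n,j,h-1}=(I-\cyc)\Ho{C}{n}(\cA)+G_{n,j,h-1}$ rather than $G_{n,j,h-1}$ alone; this is exactly why the authors are forced to work with cyclic rather than simplicial cochains (cf.\ the label of the corollary and the remark in the introduction).

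For tensors with $n+1\notin I_\et$ your argument is complete and coincides with the paper's proof; to repair it in general, treat the index $n+1$ separately using \eqref{eq:different-at-edge} and allow yourself to cancel and to identify error terms up to cyclic equivalence, i.e.\ modulo $(I-\cyc)\Ho{C}{n}(\cA)$, as the paper does. (You cannot simply cycle $\et$ at the outset to avoid the edge case either, since the congruence is asserted for the given tensor $\et$ itself, and $P_k$ is not applied to a cyclic cohomology class but to a chain; the cyclic invariance is only recovered because the cochains one pairs against are cyclic, which is again the point.)
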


\begin{proof}
Fix $i\in I_\et$ and consider the corresponding terms enclosed by square brackets on the right-hand side of \eqref{eq:almost} (or, if $i=n+1$, the terms in \eqref{eq:different-at-edge}).

If $x_i$ is a block-unit and $1\leq i \leq n$, then by Lemma~\ref{l:picky}\ref{item:BU} the first and third of these terms cancel out, while the middle term is equal to $\filler{i-1}\tp\lcu{x_ix_{i+1}}\tp x_ix_{i+1}\tp\filler{n-i}$ and so lies in $\cF_n^{j-1}$. If $i=n+1$ and $x_{n+1}$ is a block-unit, we have to consider \eqref{eq:different-at-edge}: there, the first and third terms will now cancel out \emph{modulo cyclic equivalence}; while the middle term is cyclically equivalent to $x_{n+1}x_1\tp\filler{n-1}\tp \lcu{x_{n+1}x_1}$ and so lies in $\cF_n^{j-1}$ as before.

If $x_i$ is not a block-unit, then we get $\et$, together with two other terms. These two will cancel if $x_i$ is not a one-block (Lemma~\ref{l:picky}\ref{item:cancel}); while if $x_i$ is a one-block that does not lie above its successor, these terms will either have fewer left-blocks or lower height than $\et$ (Lemma~\ref{l:picky}\ref{item:drop}). That leaves only the case where $i\in R_\et$, when one of the terms will have fewer left-blocks and the other will form part of $\err(\et)$.

Summing up over all $i\in I_\et$, we obtain from \eqref{eq:almost}
\[ \begin{aligned}
(ds+sd)(\et)
 & \equiv \left( \sum_{\text{$i\in I_{\et}$, $x_i$ not a block-unit}} \et\right)
  \; - \; \left(\sum_{i\in R_\et} \filler{i-1}\tp\lcu{x_i}\tp x_ix_{i+1} \tp \filler{n-i}\right) \mod H_{n,j,h-1} \\
\end{aligned} \]
The first sum in brackets is equal to $(j-r)\et$; the second is equal to $\err(\et)$; and now rearranging gives us the desired identity.
\end{proof}


At this point, note that the degree of a left-block and the height of a tensor, which both play a pivotal role in our analysis, depend only on the degrees of terms in an elementary tensor, i.e.~which elements of the structure semilattice~$L$ index these terms. This motivates the following definition.
\begin{dfn}
Given an elementary tensor $\et=\subchain{1}{n+1}$, the \dt{shape} of $\et$ is the tensor $[x_1]\tp \cdots\tp[x_{n+1}] \in \ell^1(L^{n+1})$.
\end{dfn}

Clearly, the number of left-blocks, the location of initial points of left-blocks, and the height, are each dependent only on the shape of a tensor. It is also clear that
if $i\in \sR_\et$\/, then $\filler{i-1}\tp \lcu{x_i}\tp x_ix_{i+1} \tp \filler{n-i}$ has the same shape as $\et$. Consequently, each term in $P_k(\et)$ either has the same shape as $\et$, or else has lower height, or else has fewer left-blocks.

\medskip
\para{Descent of an elementary tensor}
Given an elementary tensor $\et=\subchain{1}{n+1}$ without minimal element, define a \dt{descending block} in $\et$ to be
 a cyclic subtensor $x_k\tp\cdots \tp x_l$ with the property that
 $[x_k] \succ [x_{k+1}]\succ \dots \succ [x_l]$, while $[x_{k-1}]\not\succ[x_k]$ and $[x_l]\not\succ[x_{i+1}]$.
Since the entries of $\et$ can strictly decrease at most $n$ times,
a descending block in $\et=\subchain{1}{n+1}$ can have length at most $n$, and so has a well-defined first element and last element.
In particular, for each $i$ we can define the \dt{descent of $x_i$ in~$\et$} to be $l-i$,
where $x_l$ is the last element in the unique descending block that contains $x_i$.
(This is interpreted cyclically, so that if $x_n\tp x_{n+1}\tp x_1$ is a descending block, then the descent of $x_{n+1}$ is~$1$.)
We denote the descent of $x_i$ in $\et$ by $\desc_i(\et)$, and now define the \dt{descent of $\et$} to be
\[ \desc(\et) \defeq \sum_{i \in \sR_\et} \desc_i(\et). \]
Since $\desc_i(\et)\leq n-1$ for all $i$ and $\abs{\sR_\et}\leq j-1$, there is a crude upper bound $\desc(\et)\leq (j-1)(n-1)$. Moreover, since $\desc{x_i} \geq 1$ for each $i\in\sR_\et$ -- recall that if $i\in \sR_\et$ then $x_i$ lies \emph{strictly} above its successor -- there is a lower bound $\desc(\et) \geq \abs{\sR_\et}$.

The idea behind the next lemma is that, given $\et$ with $\sR_\et$ non-empty, each term in $\err(\et)$ either has one more block-unit, or else has one of the block-units shifted one place to the left; since each such term has the same shape as $\et$, this process must terminate after a finite number of steps.

\begin{lem}\label{l:tombant}
Let $\et=\subchain{1}{n+1}$ be a tensor without minimal element, such that $R_\et$ is non-empty, and let $i\in \sR_\et$. Then
\[ \desc\left( \filler{i-1}\tp\lcu{x_i}\tp x_ix_{i+1} \tp \filler{n-i}\right) < \desc(\et). \]
\end{lem}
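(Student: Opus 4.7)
\medskip
\noindent\textbf{Proof plan.}
Write $\et' = \filler{i-1}\tp\lcu{x_i}\tp x_ix_{i+1}\tp \filler{n-i}$ for the tensor on the left-hand side. The first step in the plan is to note that $\et$ and $\et'$ have the \emph{same shape}: indeed $[\lcu{x_i}]=[x_i]$ because $\lcu{R_\alpha}\subseteq R_\alpha$, and $[x_ix_{i+1}]=[x_{i+1}]$ because $[x_i]\succ[x_{i+1}]$ (as $i\in\sR_\et$). Consequently $I_\et = I_{\et'}$, the descending-block decomposition of the two tensors coincides, and $\desc_k(\et)=\desc_k(\et')$ for every $k$. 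So it suffices to show that
\[
\sum_{k\in\sR_\et}\desc_k(\et) \;-\; \sum_{k\in\sR_{\et'}}\desc_k(\et) \;>\; 0.
\]

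Next I would observe that since shape is preserved, the only place $\sR$-membership can differ between $\et$ and $\et'$ is in the block-unit condition, which depends on the entries $x_k,x_{k+1}$; since $\et$ and $\et'$ differ only at positions $i$ and $i+1$, the symmetric difference $\sR_\et\triangle\sR_{\et'}$ is contained in the cyclic window $\{i-1,i,i+1\}$. Then I would analyze each of these three positions. At $k=i$: using Lemma~\ref{l:units-trick} together with $\lcu{x_i}\cdot x_i x_{i+1}=x_i x_{i+1}$, one sees that $\lcu{x_i}$ is a block-unit in $\et'$, so $i\in\sR_\et\setminus\sR_{\et'}$, contributing $+\desc_i(\et)\geq 1$. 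At $k=i-1$: the identity $x_{i-1}\lcu{x_i}=(x_{i-1}x_i)y_{[x_i]}$ shows that if $x_{i-1}$ is a block-unit in $\et$ then $x_{i-1}$ is also a block-unit in $\et'$; contrapositively, $i-1\in\sR_{\et'}\Rightarrow i-1\in\sR_\et$, so this position contributes non-negatively.

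The delicate case, and the main obstacle, is $k=i+1$: here one could have $i+1\in\sR_{\et'}\setminus\sR_\et$, which would subtract $\desc_{i+1}(\et)$. The saving observation is that membership in $\sR_{\et'}$ at position $i+1$ forces the degree condition $[x_{i+1}]\succ[x_{i+2}]$; combined with $[x_i]\succ[x_{i+1}]$ (from $i\in\sR_\et$), this places $i,i+1,i+2$ inside a single descending block of $\et$, forcing $\desc_{i+1}(\et)=\desc_i(\et)-1$. Summing the contributions gives
\[
\desc(\et)-\desc(\et') \;\geq\; \desc_i(\et) \;+\; 0 \;-\; \bigl(\desc_i(\et)-1\bigr) \;=\; 1,
\]
which is the required strict inequality. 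The boundary case $i=n+1$ is handled identically under the cyclic interpretation of positions, and no other $k$ can contribute since entries outside $\{i,i+1\}$ are unchanged.
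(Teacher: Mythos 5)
Your proof is correct and follows essentially the same route as the paper's: both rest on the observation that the substitution preserves the shape (so all descents, blocks and initial points are unchanged), that $\lcu{x_i}$ becomes a block-unit so position $i$ drops out of $\sR$ (a gain of $\desc_i(\et)\geq 1$), and that any new contribution at position $i+1$ is at most $\desc_i(\et)-1$ because $[x_i]\succ[x_{i+1}]$ puts $i$ and $i+1$ in the same descending block. The only cosmetic difference is that you treat position $i-1$ separately via a one-sided implication, whereas the paper observes that membership there is unchanged, and the paper disposes of the degenerate case by noting $n\geq 2$ and cycling so that $2\leq i\leq n$.
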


\begin{proof}
To avoid potential concern over degenerate cases, we start by observing that since $\et$ has no minimal element but $\sR_\et$ is non-empty, we must have $n\geq 2$. Next, since the definition of descent is cyclically invariant, we may as well cycle our tensor so that $2\leq i \leq n$ (this just simplifies some of the notational book-keeping).

Put $\yy=\filler{i-1}\tp\lcu{x_i}\tp x_i x_{i+1} \tp\filler{n-i}$\/.
Note that $\yy$ coincides with $\et$ in the first $i-1$ and last $n-i$ entries, and $[y_k]=[x_k]$ for all $k\in\{1,\dots,n+1\}$.
Given $r\in\{1,\dots,i-1\}\cup\{i+2,\dots, n+1\}$, it follows that $y_r$ is a one-block and non-block-unit lying strictly above its successor in $\yy$, if and only $x_r$ is a one-block and non-block-unit lying strictly above its successor in $\et$; moreover, if this is the case then the descent of $y_r$ is equal to that of $x_r$. It follows from the definition of descent that
\[
  \desc(\et) - \sum_{r\in \sR_\et \cap \{i,i+1\}} \desc_r(\et)
= \desc(\yy) - \sum_{r\in \sR_\yy \cap \{i,i+1\}} \desc_r(\yy).
 \]
Now by hypothesis $i\in \sR_\et$; and since $y_i$ is a block-unit in $\yy$, we have $i\notin \sR_\yy$. Moreover, since $[x_i]\succeq [x_{i+1}]$, it is clear that the descent of $x_i$ in $\et$ is strictly greater than the descent of $y_{i+1}=x_ix_{i+1}$ in $\yy$. Therefore
\[ \sum_{r\in \sR_\et \cap \{i,i+1\}} \desc_r(\et) \geq \desc_i(\et) > \desc_{i+1}(\yy) \geq \sum_{r\in \sR_\yy \cap \{i,i+1\}} \desc_r(\yy) \]
and hence $\desc(\et) > \desc(\yy)$ as claimed.
\end{proof}

\begin{cor}
Let $\et\in\cF_n^{j,h}$. Then $(P_j\dotsb P_1)(\et)$ is
 congruent $\mod H_{n,j,h-1}$ to a linear combination of elementary tensors which have smaller descent than~$\et$.
In particular, if $N\geq j(n-1)n^2$,
then
$(P_j\dotsb P_1)^N(\et)$ is cyclically equivalent to a tensor in $\lin \cF_n^{j-1}$.
\end{cor}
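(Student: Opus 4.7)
The plan is to diagonalize each operator $P_k$ modulo lower-order terms and then iterate. First I would pass to the quotient of $\Ho{C}{n}(\cA)$ by $H_{n,j,h-1}$ together with the span $W$ of all elementary tensors of descent strictly less than $\desc(\et)$. Lemma~\ref{l:tombant} ensures that every term of $\err(\yy)$ lies in $W$ whenever $\desc(\yy) = \desc(\et)$, and so the formula from Corollary~\ref{c:now-we-need-cyclic} tells us that $P_k$ acts on the class of $\et$ in this quotient by multiplication by the scalar $1 - (j - r_\et)/k$, where $r_\et$ denotes the number of block-units in $\et$. Hence in this quotient $(P_j \dotsb P_1)(\et)$ equals $\prod_{k=1}^{j} \bigl(1 - (j - r_\et)/k\bigr) \cdot \et$, and since $j - r_\et \in \{1,\dots,j\}$ by Remark~\ref{rem:not-all-block-units}, the factor corresponding to $k = j - r_\et$ vanishes. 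This proves the first claim, namely that $(P_j \dotsb P_1)(\et)$ lies in $H_{n,j,h-1} + W$.

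For the iterated statement I would apply the first claim repeatedly. Each application strictly reduces the maximum descent appearing outside $H_{n,j,h-1}$, and since $\desc(\et) \leq (j-1)(n-1)$, after at most $j(n-1)$ iterations the result lies in $H_{n,j,h-1}$ modulo cyclic equivalence, hence in $\lin \cF_n^{j,h-1} + \lin \cF_n^{j-1}$. The $\lin \cF_n^{j-1}$ summand is the desired target; for the $\lin \cF_n^{j,h-1}$ summand I would recurse on height. Since any tensor with $j \geq 2$ left-blocks has height at least $n+1$ and at most $n(n+1)$, there are at most $n^2$ such height-reduction steps to perform, multiplying to yield the stated bound $N \geq j(n-1) \cdot n^2$.

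The main obstacle is to verify that $P_j \dotsb P_1$ preserves $H_{n,j,h-1}$ modulo cyclic equivalence, so that the iterative scheme is well-defined at each height level. This follows because $P_k$ descends to the quotient by cyclic equivalence (using Lemma~\ref{l:our-second-mending-is-cyclic}) and because $\err$ preserves the shape of any elementary tensor, ensuring that the number of left-blocks and the height cannot grow under iteration. Once this is in place, the descent and height reductions decouple cleanly and the counting of iterations is routine.
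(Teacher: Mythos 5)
Your argument is correct and is essentially the paper's own proof: it rests on the same ingredients (Corollary~\ref{c:now-we-need-cyclic}, Remark~\ref{rem:not-all-block-units}, Lemma~\ref{l:tombant}), your vanishing factor $k=j-r_{\et}$ in the product $\prod_{k=1}^{j}\bigl(1-(j-r_{\et})/k\bigr)$ plays exactly the role of the paper's isolated operator $P_{j-r}$, and the $j(n-1)$-step descent induction followed by at most $n^2$ height reductions is the paper's counting verbatim. The only point to tighten is that $W$ should be taken inside $\lin\cF_n^{j,h}$ (elementary tensors with at most $j$ left-blocks and height at most $h$ and smaller descent), since Corollary~\ref{c:now-we-need-cyclic} only controls $P_k$ on such tensors modulo $H_{n,j,h-1}$; with that adjustment, and your observation that $\err$ preserves shape, $P_k$ visibly maps $H_{n,j,h-1}+W$ into itself and the quotient computation goes through.
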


\begin{proof}
Let $h\defeq \high(\et)$.
First, note that the operators $P_1,\dots, P_j$ are pairwise commuting (as they are just linear combinations of $I$ and $sd+ds$). Note also that by Corollary~\ref{c:now-we-need-cyclic} and the remarks preceding it, each $P_i$ maps $H_{n,j,h-1}$ to itself.

Now, if $r$ is the number of block-units in $\et$, let
\[ Q_{j-r} =P_j\dotsb P_{j-r+1}P_{j-r-1}\dotsb P_1\,.\]
We note that $Q_{j-r}$ maps $H_{n,j,h-1}$ to itself. Hence,
recalling that $0\leq r\leq j-1$, it follows from Corollary~\ref{c:now-we-need-cyclic} that
\begin{equation}\label{eq:BUTTERMAN}
P_j\dots P_1(\et) = Q_{j-r} P_{j-r}(\et) \equiv Q_{j-r}(\err(\et)) \quad\mod H_{n,j,h-1}.
\end{equation}

Let $\yy$ be an elementary tensor in $G_{n,j,h-1}$.
For arbitrary $k$, the identity \eqref{eq:Brel} also implies that the tensor $P_k(\yy)$ is cyclically equivalent to a linear combination of a term in $G_{n,j,h-1}$, some scalar multiple of $\yy$, and some scalar multiple of $\err(\yy)$\/; in particular, $\mod H_{n,j,h-1}$, $P_k(\yy)$ is a linear combination of terms whose descent does not exceed $\desc(\yy)$.
(This uses Lemma~\ref{l:tombant} applied to~$\yy$.)

Since $Q_{j-r}$ is a product of various $P_k$, the same is true of $Q_{j-r}(\yy)$; and so the descent of \emph{each term} in $Q_{j-r}(\err(\et))$ is bounded above by $\desc(\err(\et))$, which is in turn strictly less than $\desc(\et)$.
Combining this with \eqref{eq:BUTTERMAN}, we see that, $\mod H_{n,j,h-1}$, the tensor $P_j\dotsb P_1(\et)$ is  a linear combination of terms with descent \emph{strictly} less than $\desc(\et)$.

Now let $P=(P_j\dots P_1)^{j(n-1)}$. Since $\desc(\et)\leq (j-1)(n-1) \leq j(n-1)-1$, the previous paragraph implies that
\[ P(\et)\equiv 0 \; \mod H_{n,j,h-1}. \]
That is, $P(\et)$ is cyclically equivalent to a linear combination of terms that have at most $j-1$ left-blocks, together with terms that have
height strictly less than $\high(\et)$.
%
Finally, we can iterate again, using the fact that $n+1\leq \high(\et) \leq n(n+1)$, to deduce that if we apply $P$ to $\et$ at least $n(n+1)-(n+1)+1 = n^2$ times, then the resulting tensor will be cyclically equivalent to one in $\lin\cF_n^{j-1}$. This concludes the proof.
\end{proof}

To prove Theorem \ref{t:reduction}, we take $N=j(n-1)n^2$: if $\psi\in\SC{n}{\cA}$ vanishes on all tensors in $\cF_n^{j-1}$, the cochain
$\psi_1\defeq \left[(P_j\dots P_1)^N\right]^*(\psi)$
vanishes on all tensors in $\cF_n^j$\/, by the previous corollary. By our earlier remarks, $\psi_1$ is in the same cyclic cohomology class as $\psi$, and we have proved Theorem~\ref{t:reduction}, provided we take for granted the proof of Proposition~\ref{p:modified-main}.
\hfill$\Box$
\end{subsection}
\end{section}

\begin{section}{The proof of Proposition~\ref{p:modified-main}}\label{s:IMGOINGSLIGHTLYMAD}
Throughout, $\et$ denotes a fixed elementary tensor $\subchain{1}{n+1}$ which has exactly $j$ left-blocks.

As in earlier sections, it will be useful to regard the boundary operator $d$ as an alternating sum of \dt{face maps}.

\begin{Definition}[Face maps on $\Ho{C}{*}(\cA)$]
For $i=0,\dots, n$, define the \dt{face maps} from $\Ho{C}{n}(\cA)$ to $\Ho{C}{n-1}(\cA)$ by
\[ \begin{aligned}
\face_0(\chain) & = x_2\tp \cdots \tp x_n \tp x_{n+1}x_1 \\
\face_i(\chain) & = \filler{i-1}\tp x_ix_{i+1}\tp \filler{n-i} \qquad\text{ for $1\leq i \leq n$.}
\end{aligned}  \]
\end{Definition}

\para{An easy but key observation}
If $a,b,c\in S$ and $[a]\preceq [bc]$, then $[a]\preceq [b]$ and $[a]\preceq [c]$. Thus, if $\et = \lbchain$, where each $w_l$ is a left-block, and if $x_r$ and $x_{r+1}$ are contained in the same left-block $w_l$, then $\face_r(\et)$ also has exactly $j$ left-blocks, and has the form
\[ \face_r(\et) = \lbsubchain{1}{l-1} \tp w_l'\tp \lbsubchain{l+1}{j}\,, \]
where the only new left-block, $w_l'$, is just $w_l$ with $x_r$ and $x_{r+1}$ multiplied together. The important point is that $w_l'$ does not become part of a larger left-block.

\medskip
If, on the other hand, the face map does the product of the end of one left-block with the start of the next left-block, then the resulting tensor might have $j$ left-blocks, but might have fewer. The following example illustrates some possibilities.

\begin{Example}[An illustration of complications]
Let $S$ be the free semilattice on $4$ generators, labelled as $g_1$, $g_2$, $g_3$ and~$g_4$. Consider
\[ \et = \overbrace{g_1g_2\tp g_2}\tp \overbrace{g_1g_2g_3\tp g_1g_2} \tp \overbrace{g_3g_4} \tp \overbrace{g_1} \tp \overbrace{g_1g_3 \tp g_3} \in \Ho{C}{7}(\cA) \]
which consists of $5$ left-blocks as indicated (so that $I_\et=\{1,3,5,6,7\}$\}). Then
\[ \face_4(\et) = g_1g_2\tp g_2\tp g_1g_2g_3 \tp g_1g_2g_3g_4 \tp g_4 \tp g_1g_2 \tp g_1g_3 \tp g_3 \in \Ho{C}{6}(\cA) \]
contains a \emph{minimal element}, so that all entries lie in the same left-block. Note that the set $I_4$ of initial points in $\face_4(\et)$ is just~$\{4\}$.

For sake of comparison, note that
\[ \begin{aligned}
  \face_5(\et) & = \overbrace{g_1g_2\tp g_2}\tp \overbrace{g_1g_2g_3\tp g_1g_2} \tp \overbrace{g_1g_3g_4 \tp g_1g_3 \tp g_3} & \quad I_5=\{1,3,5\} \\
  \face_6(\et) & = \overbrace{g_1g_2\tp g_2}\tp \overbrace{g_1g_2g_3\tp g_1g_2} \tp \overbrace{g_3g_4} \tp \overbrace{g_1g_3 \tp g_3} & \quad I_6=\{1,3,5,6\} \\
  \face_2(\et) & = \overbrace{g_1g_2} \tp \overbrace{g_1g_2g_3\tp g_1g_2} \tp \overbrace{g_3g_4} \tp \overbrace{g_1} \tp \overbrace{g_1g_3 \tp g_3}
	& \quad I_2 =\{1,2,4,5,6\}
\end{aligned} \]
\end{Example}

With this warning example in mind, we start on the proof.
Define indexing sets $\SD\subseteq \{1,\dots, n\}\times \{0,\dots,n\}$ and $\DS\subseteq \{0,\dots,n+1\}\times\{1,\dots, n+1\}$ by
\[ \begin{aligned}
\SD & = \{ (i,p) \st 0\leq p \leq n \text{ and } i\in I_{\face_p(\et)} \}
    & = \coprod_{0\leq p \leq n}  I_{\face_p(\et)}\times\{p\} \\
\DS & = \{(r,k) \st 0\leq r \leq n+1 \text{ and } k\in I_{\et} \}
    & = \{0,\dots,n+1\}\times I_{\et}\,.
\end{aligned} \]
Then
\[ (sd+ds)(\et) = \sum_{(i,p)\in\SD} (-1)^p s_i \face_p(\et) + \sum_{(r,k)\in\DS} (-1)^r \face_r s_k(\et), \]
and the first task in evaluating this tensor is to show that most terms on the right-hand side either cancel pairwise, or have fewer than $j$ left-blocks, or have lower height than~$\et$. Much of this takes place in greater generality, without using the properties of the left-coherent units that are inserted.

\begin{lem}\label{l:generic-identities}\
\begin{subequations}
\begin{enumerate}
\renewcommand{\labelenumi}{{\rm(\Alph{enumi})}}
\renewcommand{\theenumi}{{\rm(\Alph{enumi})}}
\item\label{item:A} Let $1\leq i < p \leq n$. Then
\begin{equation}\label{eq:insert-pinch}
\begin{aligned}
  s_i \face_p (\et)
 & = (-1)^i \filler{i-1} \tp \lcu{x_i}\tp \subchain{i}{p-1} \tp x_px_{p+1}\tp \filler{n-p-1}
 & =  \face_{p+1} s_i(\et)
\end{aligned}
\end{equation}
\item\label{item:B} Let $1\leq p < i\leq n$. Then
\begin{equation}\label{eq:pinch-insert}
\begin{aligned}
  s_i\face_p(\et)
 & = (-1)^i \filler{p-1} \tp x_p\subchain{p+1}{i} \tp \lcu{x_{i+1}}\tp x_{i+1} \tp \filler{n-i}
 & = - \face_p s_{i+1}(\et)
\end{aligned}
\end{equation}
\item\label{item:C} Let $1\leq i \leq n-1$. Then
\begin{equation}\label{eq:edges}
s_i\face_0(\et)
 = (-1)^i \subchain{2}{i} \tp \lcu{x_{i+1}}\tp x_{i+1} \tp \filler{n-i}\tp x_{n+1}x_1
 = -\face_0 s_{i+1}(\et)
\end{equation}
\end{enumerate}
\end{subequations}
\end{lem}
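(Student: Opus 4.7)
The plan is to verify each of the three identities by direct computation on the elementary tensor $\et = \chain$, tracking carefully how an insertion operator shifts indices relative to a face map, and vice versa. These identities are essentially the classical simplicial identities between face and degeneracy maps, modified in two inessential ways: $s_i$ inserts the left-coherent unit $\lcu{x_i}$ rather than a strict unit, and it carries an explicit sign $(-1)^i$. None of the three identities requires any input from the semigroup structure of $S$ beyond associativity; in particular, the inserted left-coherent units never need to be manipulated, because in each case they are simply copied across from one side of the identity to the other.

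For part \ref{item:A}, the insertion site lies strictly to the left of the product. Computing $s_i\face_p(\et)$ directly: $\face_p$ leaves positions $1,\dots,p-1$ unchanged and replaces positions $p,p+1$ by $x_px_{p+1}$, so $s_i$ simply inserts $\lcu{x_i}$ at position $i<p$. For the other direction, in $s_i(\et)$ the entries originally at positions $p$ and $p+1$ have been shifted to positions $p+1$ and $p+2$, so applying $\face_{p+1}$ multiplies exactly these two and yields the same tensor. Both compositions carry the sign $(-1)^i$, so the two sides agree.

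For part \ref{item:B}, the multiplication happens strictly to the left of the insertion. In $\face_p(\et)$ the entries at positions $\ge p+1$ are shifted down by one, so position $i$ of $\face_p(\et)$ holds $x_{i+1}$, and $s_i$ therefore inserts $\lcu{x_{i+1}}$; this matches the middle expression. For the other direction, $s_{i+1}(\et)$ inserts at position $i+1$ without disturbing positions $\le i$, so $\face_p$ (with $p<i+1$) multiplies the original $x_p,x_{p+1}$. The underlying tensors coincide, but $s_{i+1}$ contributes $(-1)^{i+1}$ in place of the $(-1)^i$ contributed by $s_i$, which accounts for the minus sign in the identity.

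Part \ref{item:C} is handled in the same spirit, but with $\face_0$ performing the cyclic wrap $\face_0(\et) = x_2\tp\cdots\tp x_n \tp x_{n+1}x_1$. Under the hypothesis $i\le n-1$, position $i$ of $\face_0(\et)$ is still $x_{i+1}$, so that $s_i\face_0(\et)$ has the stated form. Conversely, $s_{i+1}(\et)$ inserts between the original $x_i$ and $x_{i+1}$, leaving the positions holding $x_1$ and $x_{n+1}$ untouched, so that the subsequent $\face_0$ still produces the wraparound $x_{n+1}x_1$ together with an otherwise identical middle portion. The sign $(-1)^{i+1}$ from $s_{i+1}$ versus the sign $(-1)^i$ from $s_i$ again supplies the minus. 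The main obstacle throughout is simply careful index bookkeeping; the only real risk is an off-by-one error in how the insertion shifts later positions, or, for part \ref{item:C}, a failure to match the cyclic wrap to the correct positions after such a shift.
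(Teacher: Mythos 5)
Your proposal is correct and follows the same route as the paper, which simply declares the lemma ``a direct computation'' and omits the details (illustrating cases (A) and (B) with a diagram); your index-and-sign bookkeeping supplies exactly that omitted verification, including the key observations that the inserted left-coherent units are merely carried along and that the sign discrepancy $(-1)^i$ versus $(-1)^{i+1}$ produces the minus signs in (B) and (C).
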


\begin{proof}
This is a direct computation. We omit the details: see Figure~\ref{fig:doodles} for a diagram which illustrates how this works in Cases \ref{item:A} and~\ref{item:B}.

\begin{figure}
\caption{Individual pairs of cancelling terms}
\label{fig:doodles}
%
\newcommand{\dlin}{\line(0,2){2}}
\newcommand{\llin}{\line(-3,4){1.5}}
\newcommand{\rlin}{\line(3,4){1.5}}
\newcommand{\hlin}{\line(1,0){1.5}}
\setlength{\unitlength}{1em}
\begin{picture}(40,10)
\put(0.8,8.5){$\scriptstyle 1$}
\put(2.3,8.5){$\scriptstyle 2$}
\put(3.8,8.5){$\scriptstyle 3$}
\put(5.3,8.5){$\scriptstyle 4$}
\put(6.8,8.5){$\scriptstyle 5$}
\put(1.0,6){\dlin}
\put(2.5,6){\dlin}
\put(4.0,6){\dlin}
\put(5.5,6){\dlin}
\put(5.5,6){\hlin}
\put(7.0,6){\llin}
\put(8.5,6){\llin}
\put(0.8,5.2){$\scriptstyle 1$}
\put(2.3,5.2){$\scriptstyle 2$}
\put(3.8,5.2){$\scriptstyle 3$}
\put(5.3,5.2){$\scriptstyle 4$}
\put(6.8,5.2){$\scriptstyle 4$}
\put(8.3,5.2){$\scriptstyle 5$}
\put(1.0,3){\dlin}
\put(2.5,3){\dlin}
\put(2.5,3){\rlin}
\put(2.5,5){\hlin}
\put(4.0,3){\rlin}
\put(5.5,3){\rlin}
\put(7.0,3){\rlin}
\put(0.8,2){$\scriptstyle 1$}
\put(2.3,2){$\scriptstyle 23$}
\put(3.8,2){$\scriptstyle 4$}
\put(5.3,2){$\scriptstyle 4$}
\put(6.8,2){$\scriptstyle 5$}
\put(3.0,0){$\face_2s_4$}
\put(8.5,0){$=$}
%
%
\put(10.8,8.5){$\scriptstyle 1$}
\put(12.3,8.5){$\scriptstyle 2$}
\put(13.8,8.5){$\scriptstyle 3$}
\put(15.3,8.5){$\scriptstyle 4$}
\put(16.8,8.5){$\scriptstyle 5$}
\put(11.0,6){\dlin}
\put(12.5,6){\dlin}
\put(12.5,6){\rlin}
\put(12.5,8){\hlin}
\put(14.0,6){\rlin}
\put(15.5,6){\rlin}
\put(10.8,5.2){$\scriptstyle 1$}
\put(12.3,5.2){$\scriptstyle 23$}
\put(13.8,5.2){$\scriptstyle 4$}
\put(15.3,5.2){$\scriptstyle 5$}
\put(11.0,3){\dlin}
\put(12.5,3){\dlin}
\put(14.0,3){\dlin}
\put(14.0,3){\hlin}
\put(15.5,3){\llin}
\put(17.0,3){\llin}
\put(10.8,2){$\scriptstyle 1$}
\put(12.3,2){$\scriptstyle 23$}
\put(13.8,2){$\scriptstyle 4$}
\put(15.3,2){$\scriptstyle 4$}
\put(16.8,2){$\scriptstyle 5$}
\put(12,0){$-s_3\face_2$}
%
%
%
\put(23.8,8.5){$\scriptstyle 1$}
\put(25.3,8.5){$\scriptstyle 2$}
\put(26.8,8.5){$\scriptstyle 3$}
\put(28.3,8.5){$\scriptstyle 4$}
\put(29.8,8.5){$\scriptstyle 5$}
\put(24.0,6){\dlin}
\put(25.5,6){\dlin}
\put(25.5,6){\hlin}
\put(27.0,6){\llin}
\put(28.5,6){\llin}
\put(30.0,6){\llin}
\put(31.5,6){\llin}
\put(23.8,5.2){$\scriptstyle 1$}
\put(25.3,5.2){$\scriptstyle 2$}
\put(26.8,5.2){$\scriptstyle 2$}
\put(28.3,5.2){$\scriptstyle 3$}
\put(29.8,5.2){$\scriptstyle 4$}
\put(31.3,5.2){$\scriptstyle 5$}
\put(24.0,3){\dlin}
\put(25.5,3){\dlin}
\put(27.0,3){\dlin}
\put(28.5,3){\dlin}
\put(28.5,5){\hlin}
\put(28.5,3){\rlin}
\put(30.0,3){\rlin}
\put(23.8,2){$\scriptstyle 1$}
\put(25.3,2){$\scriptstyle 2$}
\put(26.8,2){$\scriptstyle 2$}
\put(28.3,2){$\scriptstyle 34$}
\put(29.8,2){$\scriptstyle 5$}
\put(26,0){$\face_4s_2$}
\put(31.5,0){$=$}
\put(33.8,8.5){$\scriptstyle 1$}
\put(35.3,8.5){$\scriptstyle 2$}
\put(36.8,8.5){$\scriptstyle 3$}
\put(38.3,8.5){$\scriptstyle 4$}
\put(39.8,8.5){$\scriptstyle 5$}
\put(34.0,6){\dlin}
\put(35.5,6){\dlin}
\put(37.0,6){\dlin}
\put(37.0,8){\hlin}
\put(37.0,6){\rlin}
\put(38.5,6){\rlin}
\put(33.8,5.2){$\scriptstyle 1$}
\put(35.3,5.2){$\scriptstyle 2$}
\put(36.8,5.2){$\scriptstyle 34$}
\put(38.3,5.2){$\scriptstyle 5$}
\put(34.0,3){\dlin}
\put(35.5,3){\dlin}
\put(35.5,3){\hlin}
\put(37.0,3){\llin}
\put(38.5,3){\llin}
\put(40.0,3){\llin}
\put(33.8,2){$\scriptstyle 1$}
\put(35.3,2){$\scriptstyle 2$}
\put(36.8,2){$\scriptstyle 2$}
\put(38.3,2){$\scriptstyle 34$}
\put(39.8,2){$\scriptstyle 5$}
\put(36,0){$s_2\face_3$}
\end{picture}
\end{figure}
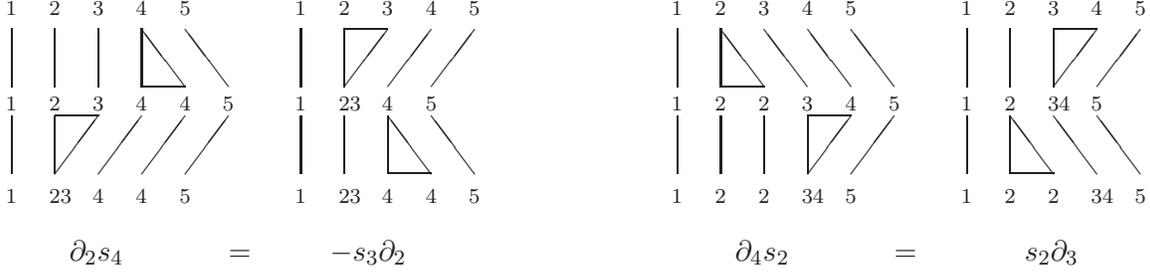
\FGedit{I don't see a cyclic shift in C, and have removed the part about extra care.}
\end{proof}

We have to keep track of which terms in a corresponding pair, as in Lemma~\ref{l:generic-identities}, actually occur when we expand out $(sd+ds)(\et)$.
More notation will be useful. Let
\[ \begin{aligned}
 A & = \{(i,p) \st 1\leq i < p \leq n\},
   & \quad A'& = \{(r,k) \st 1\leq k < r-1 \leq n \}, \\
 B & = \{(i,p) \st 1\leq p < i\leq n \},
   & \quad B'& = \{(r,k) \st 1\leq r < k-1 \leq n \}, \\
 C & = \{(i,0) \st 1\leq i \leq n-1\},
   & \quad C'& = \{(0,k) \st 2\leq k \leq n \},
\end{aligned}  \]
so that $A$, $B$ and $C$ are pairwise disjoint subsets of $\{1,\dots,n\}\times\{0,\dots,n\}$ and $A'$, $B'$ and $C'$ are pairwise disjoint subsets of $\{0,\dots,n+1\}\times\{1,\dots,n+1\}$.
Now put
\[ \begin{aligned}
 \DS_j & \defeq \{ (r,k) \in \DS \st \face_rs_k(\et) \text{ has exactly $j$ left-blocks} \}, \\
 \SD_j & \defeq \{ (i,p) \in \SD \st \face_p(\et)  \text{ has exactly $j$ left-blocks}  \},
\end{aligned} \]
and let
$\DS_j^* \defeq \DS_j\cap (A' \sqcup B' \sqcup C')$
and
$\SD_j^* \defeq \SD_j\cap (A\sqcup B\sqcup C)$.

\medskip
It turns out that the obvious maps $A\leftrightarrow A'$, $B\leftrightarrow B'$ and $C\leftrightarrow C'$ restrict to give a bijection between $\DS^*_j$ and $\SD^*_j$ (which shows that most terms in $\sum_{\DS_j} +\sum_{\SD_j}$ cancel pairwise). To do this precisely, we have a lemma.

\begin{lem}\label{l:when-pinching-reduces}
Let $m\geq 1$, and let $\yy= y_1\tp\cdots \tp y_{m+1}$ be an elementary tensor with $j$ left-blocks.
Let $1 \leq p \leq m$. Then $\face_p(\yy)\in\cF_n^{j-1}$ if and only if one (or both) of the following holds: either {\rm (a)}~$y_p$ is a $1$-block in $\yy$; or {\rm (b)}~$[y_py_{p+1}]\preceq [w]$, where $w$ is the left-block immediately following the one which contains~$y_{p+1}$\/.
An analogous result holds for $\face_0(\yy)$, provided that condition~{\rm (a)} is interpreted as ``$y_{m+1}$ is a $1$-block in $\yy$'', and condition~{\rm (b)} as ``\/$[y_{m+1}y_1]\preceq [w]$ \dots''.
\end{lem}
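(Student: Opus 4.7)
The plan is to split into two main cases, according to whether $y_p$ and $y_{p+1}$ lie in the same left-block of $\yy$ or in two consecutive ones. Write $w_1,\dots,w_j$ for the left-blocks of $\yy$ listed in order.

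If $y_p$ and $y_{p+1}$ are both contained in a single $w_l$, then by the ``easy but key observation'' recorded earlier in this section, $\face_p(\yy)$ again has exactly $j$ left-blocks, so it does not lie in $\cF_n^{j-1}$. One then checks that neither (a) nor (b) can hold. Condition (a) fails because $w_l$ has length at least $2$; and taking $w = w_{l+1}$, one has $[w_l]\preceq[y_py_{p+1}]$ while maximality of $w_l$ as a left-block forces $[w_l]\not\preceq[w]$, so $[y_py_{p+1}]\not\preceq[w]$ and (b) also fails.

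Now suppose $y_p$ is the last entry of some $w_l$ and $y_{p+1}$ is the first entry of $w_{l+1}$. The remainder of the argument is a position-by-position count of the left-blocks of $\face_p(\yy)$. The key ingredient, used repeatedly, is that maximality of $w_l$ (respectively $w_{l-1}$) forces $[w_l]\not\preceq[y_{p+1}]$ (respectively $[w_{l-1}]\not\preceq[y_p]$), and hence also $[w_l]\not\preceq[y_py_{p+1}]$ (respectively $[w_{l-1}]\not\preceq[y_py_{p+1}]$). If $w_l=\{y_p\}$, i.e.\ condition (a) holds, then $w_1,\dots,w_{l-1}$ persist in $\face_p(\yy)$; a fresh left-block begins at the new position $p$, and since $[y_py_{p+1}]\preceq[y_{p+1}]=[w_{l+1}]$ it absorbs at least $w_{l+1}$ minus its first entry. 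This yields at most $(l-1)+1+(j-l-1)=j-1$ left-blocks, so $\face_p(\yy)\in\cF_n^{j-1}$. If instead $|w_l|\ge 2$, then $w_l$ minus its final entry $y_p$ survives as a left-block of $\face_p(\yy)$, while a new potential left-block starts at position $p$ and contains $y_py_{p+1}$ together with the rest of $w_{l+1}$. This new left-block terminates there unless it can be extended to swallow $w_{l+2}$, which occurs precisely when $[y_py_{p+1}]\preceq[w_{l+2}]=[w]$; that is, precisely when (b) holds. In that case the total count drops to at most $j-1$; otherwise it remains equal to~$j$.

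Assembling these subcases yields the desired equivalence. The $\face_0$ statement is handled identically after reindexing cyclically, with $y_{m+1}$ playing the role of $y_p$ and $y_1$ that of $y_{p+1}$; since left-blocks are already understood cyclically in this paper, no genuinely new argument is required. I expect the main difficulty to lie in the second case, where one must carefully verify, via maximality of $w_l$, that the truncated block obtained by removing $y_p$ does not unexpectedly merge with the new entry at position $p$, and that the dichotomy between ``stays at $j$ blocks'' and ``drops by at least one block'' is controlled exactly by~(b).
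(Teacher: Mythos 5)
Your proposal is correct and follows essentially the same route as the paper: a case analysis on whether $y_p$ and $y_{p+1}$ lie in the same left-block or in consecutive ones, using maximality of the left-blocks to rule out backward merges and condition (b) to govern the forward merge into $w$, with the $\face_0$ case handled by cyclic reindexing. The bookkeeping you leave implicit (that away from position $p$ no new initial points of left-blocks can be created, so the listed intervals really account for all blocks) is of the same routine kind that the paper itself asserts without proof when it says the blocks $w_1,\dots,w_{k-2}$ and $w_{k+1},\dots,w_j$ ``remain left-blocks'' in $\face_p(\yy)$.
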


\begin{proof}
We first note that the case $p=0$ is not really distinct from the cases $1\leq p \leq m$, once we interpret `position $0$' in a tensor of length $m+1$ as being position $m+1$.
Next, we may assume without loss of generality, that $1\in I_\yy$. (For if not, then by applying a suitable power of $\cyc$ we obtain a tensor $\yy'$ in which 1 is an initial point, and work with $\yy'$ instead.)
Let $\yy = \lbchain$ be the decomposition of $\yy$ into its constituent left-blocks. Let~$w_k$ be the left-block which contains~$y_{p+1}$.

If (a) holds, then $w_{k-1}$ just consists of the single element $y_p$, and
\[ \face_p(\yy) = \lbsubchain{1}{k-2} \tp y_p\cdot \lbsubchain{k}{j}\,; \]
thus two left-blocks have been merged together, and there are now at most $j-1$ of them.
If~(b) holds, then every element of $w$ and every element of $w_k$ lies above $[y_py_{p+1}]$, so that in $\face_p(\yy)$ these two left-blocks are merged into a single one; thus once again, the number of left-blocks has decreased.

Conversely suppose that $\face_p(\yy)$ has fewer than $j$ left-blocks, and suppose (b) does not hold. Then the left-blocks $w_1,\dots, w_{k-2}$ and $w_{k+1}, \dots, w_j$ remain left-blocks in $\face_p(\yy)$. Therefore $w_{k-1}\cdot w_k$ must form a single left-block. If $y_r$ denotes the initial element of $w_{k-1}$, and $r<p$, then this implies that $[y_r]\preceq [y_py_{p+1}]\preceq [y_{p+1}]$ and this contradicts the fact that $w_{k-1}$ and $w_k$ are disjoint left-blocks. The only remaining possibility is that $w_{k-1}$ is a $1$-block, with $y_p$ as its sole element, and so (a) holds.
\end{proof}

In view of condition (b) in this lemma, we say that the tensor $\yy$ has a \dt{dead spot at~$p+1$}, for $0\leq p \leq m$, if $[x_px_{p+1}]\preceq [w]$, where $w$ is the left-block immediately following the one which contains~$x_{p+1}$\/.
Once again, this definition should be interpreted cyclically, so that having a dead spot at $1$ means that $[x_{m+1}x_1]\preceq [w]$, etc.

\begin{prop}
Define $\phi:\DS_j^* \to \{1,\dots,n\}\times\{1,\dots,n\}$ by
\begin{equation}
\phi(r,k) = \left\{
\begin{aligned}
(k-1,r) & \quad\text{ if $(r,k)\in B'\cup C'$} \\
(k,r-1) & \quad\text{ if $(r,k)\in A'$}.
\end{aligned}
\right.
\end{equation}
Then $\phi$ maps $\SD_j^*$ bijectively onto $\DS_j^*$.
Consequently,
\begin{equation}\label{eq:kill-off-diag}
   \sum_{(r,k)\in \DS_j^*} (-1)^r \face_r s_k(\et)
 + \sum_{(i,p)\in \SD_j^*} (-1)^p s_i \face_r(\et) = 0 \,.
\end{equation}
\end{prop}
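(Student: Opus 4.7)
The plan is to establish the statement in three steps: first, check from the formulas that $\phi$ is a bijection from $A' \cup B' \cup C'$ onto $A \cup B \cup C$; second, show that under this identification the subsets $\DS_j^*$ and $\SD_j^*$ correspond; and third, read off \eqref{eq:kill-off-diag} from the paired identities of Lemma~\ref{l:generic-identities} together with their signs.

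Steps one and three should be routine. The bijection check is immediate from reading the formulas. For the sign-cancellation, Lemma~\ref{l:generic-identities} provides $s_i\face_p(\et) = \pm\face_r s_k(\et)$ under the pairing, and a direct computation confirms the terms cancel in the signed sum: in Case A one has $r = p+1$ with no sign in the identity, so $(-1)^p$ and $(-1)^r$ have opposite parity; in Cases B and C one has $r = p$ combined with a minus sign in the identity.

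The substantive work will be step two. For each paired $(r,k) \leftrightarrow (i,p) = \phi(r,k)$ I must verify two claims: that $k \in I_\et \Leftrightarrow i \in I_{\face_p(\et)}$, and that $\face_p(\et)$ has $j$ left-blocks if and only if $\face_r s_k(\et)$ does. The block-count claim is easy to handle: since $[\lcu{x_k}] = [x_k]$, the insertion $s_k$ at an initial point of a left-block of $\et$ simply prepends $\lcu{x_k}$ to that left-block without altering the total left-block count, and combining with $s_i\face_p(\et) = \pm \face_r s_k(\et)$ from Lemma~\ref{l:generic-identities} forces the three tensors $\face_p(\et)$, $s_i\face_p(\et)$, and $\face_r s_k(\et)$ to share a common left-block count.

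The main obstacle will be matching the initial-point sets. Here the key tool is Lemma~\ref{l:when-pinching-reduces}: when $\face_p(\et)$ preserves the left-block count of $\et$, neither of that lemma's two conditions holds, so $\face_p$ does not collapse any pair of adjacent left-blocks. This should yield a natural bijection between the left-blocks of $\et$ and those of $\face_p(\et)$, acting as the identity on positions strictly less than $p$ and as a downward shift by one on positions strictly greater than $p+1$. In Case A' (where $k < p$) this gives $k \in I_\et \Leftrightarrow k \in I_{\face_p(\et)}$, because positions $\leq k$ coincide in $\et$ and $\face_p(\et)$ and the condition for $x_k$ to start a new block depends only on the degrees of those earlier elements; in Cases B' and C' (where $k > p+1$, with cyclic interpretation in C') it gives $k \in I_\et \Leftrightarrow k - 1 \in I_{\face_p(\et)}$ via the position shift caused by the pinching. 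Combining these correspondences with steps one and three then yields \eqref{eq:kill-off-diag}.
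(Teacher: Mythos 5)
Your proposal is correct and follows essentially the same route as the paper's proof: the obvious bijections $A'\to A$, $B'\to B$, $C'\to C$, the signed identities of Lemma~\ref{l:generic-identities} for the pairwise cancellation, and Lemma~\ref{l:when-pinching-reduces} (no collapsing of adjacent left-blocks when the count is preserved) to match $\DS_j^*$ with $\SD_j^*$. The one point to make explicit when writing it up is that inserting $\lcu{y}$ immediately before $y$ leaves the number of left-blocks unchanged at \emph{any} position, not only at initial points: you need this to relate the counts of $\face_p(\et)$ and $s_i\face_p(\et)$ before you know that $i\in I_{\face_p(\et)}$ (otherwise your two sub-claims feed into each other circularly) -- this is also precisely the step the paper's own proof leaves implicit.
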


\begin{proof}
Start by noting that $\phi$ is the restriction of obvious bijections from $A'$, $B'$ and $C'$ to $A$, $B$ and $C$ respectively.

Moreover: the identities in Lemma~\ref{l:generic-identities} show that if $(r,k)\in\DS_j^*$, then $\phi(r,j)\in \SD_j^*$\/. (The point is that if, say, $1\leq k\leq r-2$ and $\face_rs_k(\et)$ has $j$ left-blocks, then the identity \eqref{eq:pinch-insert} shows that $s_{k-1}\face_r(\et)$ has $j$ left-blocks, so $\face_r(\et)$ must have $j$ left-blocks.) Thus, $\ran\phi \subseteq \SD_j^*$.

To show the converse inclusion: let $(i,p)\in\SD_j^*$. Then by Lemma~\ref{l:when-pinching-reduces} (with $m=n$), $x_p$ is not a $1$-block in $\et$ and $p+1$ is not a dead spot in $\et$. (If $p=0$ this means $x_{n+1}$ is not a $1$-block, etc.)
Therefore, by the other direction of Lemma~\ref{l:when-pinching-reduces} (with $m=n+1$):
\begin{itemize}
\item[--] if $1\leq i \leq p-1$, and we consider $s_i(\et)$, then $x_p$ (occuring in position $p+1$) is not a $1$-block in $s_i(\et)$ and $p+2$ is not a dead spot in $s_i(\et)$, so that $(i,p+1)\in\DS^*_j$\/;
\item[--] if $2\leq p+1\leq i \leq n$, and we consider $s_{i+1}(\et)$, then $x_p$ (occuring in position $p$) is not a $1$-block in $s_{i+1}(\et)$ and $p+1$ is not a dead spot in $s_{i+1}(\et)$, so that $(i+1,p)\in\DS^*_j$\/.
\item[--] if $p=0$ and $1\leq i \leq n$, and we consider $s_{i+1}(\et)$, then $x_{n+1}$ (occuring in position $n+2$) is not a $1$-block in $s_{i+1}(\et)$ and $1$ is not a dead spot in $s_{i+1}(\et)$, so that $(i+1,0)\in\DS^*_j$\/.
\end{itemize}
In each case, $(i,p)\in\ran\phi$ as required.
\end{proof}

\medskip
We now continue with the proof of Proposition~\ref{p:modified-main}.
It follows from \eqref{eq:kill-off-diag} that,
\begin{equation}\label{eq:interim}
(sd+ds)(\et)
 \equiv \sum_{(r,k)\in \DS_j\setminus \DS_j^*} (-1)^r \face_r s_k(\et)
   + \sum_{(i,p)\in \SD_j\setminus \SD_j^*} (-1)^p s_i\face_p(\et) \; \mod (\lin \cF_n^{j-1}).
\end{equation}
Expanding out the terms on the right-hand side gives
\begin{subequations}
\begin{eqnarray}
   & \sum_{1\leq k\leq n+1\st (k-1,k)\in \DS_j} &(-1)^{k-1} \face_{k-1}s_k(\et)
	\label{eq:ds1}  \\
 + & \sum_{1\leq k \leq n+1\st (k,k)\in \DS_j}  &(-1)^k\face_ks_k(\et)
	\label{eq:ds2}  \\
 + & \sum_{1\leq k \leq n \st (k+1,k)\in \DS_j} &(-1)^k \face_{k+1} s_k(\et) \\
 + & \sum_{1\leq i\leq n \st (i,i)\in\SD_j}     &(-1)^i s_i\face_i(\et)
	\label{eq:sd}  \\
 + & R_{\DS}(\et) + R_{\SD}(\et),
\label{eq:edge-cases}
\end{eqnarray}
\end{subequations}
where the terms $R_{\DS}(\et)$ and $R_{\SD}(\et)$ are defined by
\[ \begin{aligned}
R_{\DS}(\et) & =
	\text{ $\face_0 s_{n+1}(\et)$ if $(0,n+1)\in\DS_j$}
	& \text{ and	$0$ otherwise,} \\
R_{\SD}(\et) & =
	\text{ $s_n\face_0(\et)$ if $(n,0)\in\SD_j$}
	& \text{ and $0$ otherwise.}
\end{aligned} \]

\begin{lem}\label{l:Donna}\
\begin{YCnum}
\item\label{item:ds-lower} If $0\leq k \leq n$ and $k+1\in I_\et$\/, then $\face_ks_{k+1}(\et)$ has strictly lower height than~$\et$.
\item\label{item:sd-lower} If $1\leq k \leq n$ and $k+1\in I_\et$, then $s_k\face_k(\et)$ has strictly lower height than~$\et$\/. If $1\in I_\et$, then $s_n\face_0(\et)$ has strictly lower height than~$\et$.
\end{YCnum}
\end{lem}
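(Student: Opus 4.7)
The key observation underlying both parts is that the hypothesis $k+1\in I_\et$ forces a strict inequality in the structure semilattice. Indeed, if the left-block starting at $x_{k+1}$ could be extended backwards by prepending $x_k$, it would no longer be maximal; unpacking the definition of ``minimal left element'', this gives $[x_k]\not\preceq [x_{k+1}]$, and hence $[x_kx_{k+1}]\prec [x_k]$ strictly in~$L$. The remainder of the proof is then a careful bookkeeping of how heights change under a replacement of entries, using the sub-semilattice comparison lemma stated just before Proposition~\ref{p:modified-main}.

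For part~\ref{item:ds-lower}, I would write out $\face_k s_{k+1}(\et)$ explicitly: up to sign, it coincides with $\et$ in every entry except at position $k$, where $x_k$ is replaced by $x_k\lcu{x_{k+1}}$, an element of degree $[x_kx_{k+1}]$. Let $L'$ denote the subsemilattice generated by the degrees of the new tensor; by construction $L'\subseteq L(\et)$, so part~(i) of the auxiliary lemma gives $\high_{L'}([x_i])\le \high_{L(\et)}([x_i])$ for $i\ne k$, while part~(ii) combined with $[x_kx_{k+1}]\prec[x_k]$ gives $\high_{L'}([x_kx_{k+1}])\le \high_{L(\et)}([x_kx_{k+1}])<\high_{L(\et)}([x_k])$. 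Summing yields $\high(\face_k s_{k+1}(\et))<\high(\et)$. The edge case $k=0$ is handled identically by cyclic symmetry: $\face_0 s_1(\et)$ ends in $x_{n+1}\lcu{x_1}$, and the hypothesis $1\in I_\et$ provides $[x_{n+1}x_1]\prec [x_{n+1}]$.

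For part~\ref{item:sd-lower}, the tensor $s_k\face_k(\et)$ differs from $\et$ in positions $k$ and $k+1$, where the entries $x_k$ and $x_{k+1}$ are replaced by $\lcu{x_kx_{k+1}}$ and $x_kx_{k+1}$, both of degree $[x_kx_{k+1}]$. Writing $L''$ for the new generating semilattice, the contribution from these two positions to the new height is at most
\[
 2\high_{L''}([x_kx_{k+1}])\le 2\high_{L(\et)}([x_kx_{k+1}]).
\]
Using that $[x_kx_{k+1}]\prec [x_k]$ (strict) together with $[x_kx_{k+1}]\preceq [x_{k+1}]$, this bound is at most $(\high_{L(\et)}([x_k])-1)+\high_{L(\et)}([x_{k+1}])$, which is strictly less than the corresponding contribution to $\high(\et)$. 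Since the remaining $n-1$ entries contribute no more to the new height than to the old one (again by part~(i) of the auxiliary lemma), we obtain $\high(s_k\face_k(\et))<\high(\et)$. The edge case $s_n\face_0(\et)$ with $1\in I_\et$ is again the cyclic analogue, replacing the pair $(x_{n+1},x_1)$ by two copies of degree $[x_{n+1}x_1]$, and the same height comparison applies.

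The only real content is the inequality $[x_kx_{k+1}]\prec [x_k]$ extracted from the hypothesis $k+1\in I_\et$; once this is in place, the rest is mechanical, though the simultaneous replacement of \emph{two} entries in part~\ref{item:sd-lower} requires the slightly more delicate observation that a single strict drop $[x_kx_{k+1}]\prec[x_k]$ already suffices to beat the doubled new contribution.
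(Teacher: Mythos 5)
Your proof is correct and follows essentially the same route as the paper's: extract $[x_k]\not\preceq[x_{k+1}]$ from the hypothesis $k+1\in I_\et$, write out the tensors $\face_ks_{k+1}(\et)$ and $s_k\face_k(\et)$ explicitly, and compare heights entry by entry using the subsemilattice lemma. Your explicit treatment of the two changed entries in part (ii), and of the $\face_0$ edge cases done directly rather than via cyclic invariance of height, is if anything slightly more detailed than the paper's rather terse argument.
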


\begin{proof}
First suppose that $1\leq k \leq n$. Then the corresponding terms in \ref{item:ds-lower} and \ref{item:sd-lower} expand out to be
\[
 \aa = (-1)^{k+1} \filler{k-1}\tp x_k\lcu{x_{k+1}}\tp x_{k+1} \tp \filler{n-k}
 \quad\text{ and }\quad
 \bb = (-1)^k\filler{k-1}\tp \lcu{x_kx_{k+1}}\tp x_kx_{k+1} \tp \filler{n-k}
\]
respectively. Since $k+1$ is initial, $[x_k]\not\preceq[x_{k+1}]$ and so
$\high_{L(\bb)}([b_k]) < \high_{L(\et)}([x_k])$; it follows that $\high(\bb)< \high(\et)$, since $\bb$ agrees with $\et$ in all other entries. A similar argument shows that $\high(\aa)<\high(\et)$.

In the case where $k=0$ (and $1\in I_\et$) then put $\bb = s_n\face_0(\et)$, $\bb'=\cyc^{-1}(\bb)$ (see \eqref{eq:dfn-cyc-shift}), and $\aa=\face_0s_1(\et)$. Then
\[
 \aa = \face_0s_1(\et) = - x_1\tp\filler{n-1}\tp \lcu{x_{n+1}}x_1
 \quad\text{ and }\quad
 \bb' = x_{n+1}x_1\tp \filler{n-1}\tp\lcu{x_{n+1}x_1} \,.
\]
The same arguments as in the first part of the proof show that $\high(\bb')$ and $\high(\aa)$ are both strictly less than $\high(\et)$; it remains only to note that since the height of an elementary tensor is unchanged by cyclic shifts, $\high(\bb)=\high(\bb')$.
\end{proof}

\begin{lem}\label{l:Noble}
Let $1\leq i \leq n$ and suppose $(i,i)\in \SD_j$. Then either $i$ or $i+1$ lies in $I_\et$. If $(n,0)\in\SD_j$ then either $n+1$ or $1$ lies in $I_\et$.

Consequently, if $\et$ has height $h$, then
\begin{equation}
\label{eq:sleep}
R_{\SD}(\et)
 + \sum_{1\leq i\leq n \st (i,i)\in\SD_j} s_i\face_i(\et)
\equiv \sum_{i\in I_\et} \filler{i-1}\tp \lcu{x_ix_{i+1}} \tp x_ix_{i+1} \tp \filler{n-i}
 \quad\mod G_{n,j,h-1}\,,
\end{equation}
where if $n+1\in I_\et$ the corresponding term on the right-hand side of~\eqref{eq:sleep} is interpreted as $(-1)^n\subchain{2}{n}\tp \lcu{x_{n+1}x_1}\tp x_{n+1}x_1$\/.
\end{lem}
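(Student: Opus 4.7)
The plan is to establish the two assertions of Lemma~\ref{l:Noble} in turn.

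The structural claim I will prove by contrapositive. Suppose $1 \leq i \leq n$ and that neither $i$ nor $i+1$ lies in $I_\et$; then neither $x_i$ nor $x_{i+1}$ is initial in $\et$, so, using that the left-blocks partition $\et$, the elements $x_{i-1}$, $x_i$ and $x_{i+1}$ all lie in a common left-block $w$, whose initial element $x_m$ satisfies $m \leq i-1$, $[x_m] \preceq [x_i]$ and $[x_m] \preceq [x_{i+1}]$. Multiplying these two semilattice inequalities gives $[x_m] \preceq [x_ix_{i+1}]$, so after applying $\face_i$ the block $w$ survives in $\face_i(\et)$ with $x_m$ still as its initial element and now extending over the merged entry $x_ix_{i+1}$. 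In particular, position $i$ of $\face_i(\et)$ is not initial, so $(i,i) \notin \SD$, contradicting $(i,i) \in \SD_j$. The case $(n,0) \in \SD_j$ is handled identically, now with the cyclic subtensor $x_n \tp x_{n+1} \tp x_1 \tp x_2$ in place of $x_{i-1}\tp x_i\tp x_{i+1}\tp x_{i+2}$ and $\face_0$ in place of $\face_i$.

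For the identity~\eqref{eq:sleep}, I will write $T_i := \filler{i-1}\tp \lcu{x_ix_{i+1}}\tp x_ix_{i+1}\tp\filler{n-i}$ for the generic summand on the right, with $T_{n+1}$ defined by the cyclic boundary prescription in the statement. Direct unpacking of the definitions shows $s_i\face_i(\et) = (-1)^i T_i$ for $1 \leq i \leq n$, and $R_{\SD}(\et) = T_{n+1}$ when $(n,0) \in \SD_j$; after accounting for the signs that appear when these contributions are assembled into $(sd+ds)(\et)$, this reduces the identity to matching the two index sets modulo $G_{n,j,h-1}$. I will carry this out by case analysis on $i$ with $1 \leq i \leq n$. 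If $i \in I_\et$ and $i+1 \in I_\et$, then $x_i$ is a $1$-block, so Lemma~\ref{l:when-pinching-reduces} gives $(i,i) \notin \SD_j$, while the disjointness of consecutive left-blocks forces $[x_i] \not\preceq [x_{i+1}]$ and hence $[x_ix_{i+1}] \prec [x_i]$, so $\high(T_i) < \high(\et)$ and $T_i \in G_{n,j,h-1}$. If $i \in I_\et$ and $i+1 \notin I_\et$, then $[x_i] \preceq [x_{i+1}]$ gives $[x_ix_{i+1}] = [x_i]$; a short direct check confirms $(i,i) \in \SD_j$, and the two occurrences of $T_i$ match exactly. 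Finally, if $i \notin I_\et$ and $(i,i) \in \SD_j$, then by the first assertion $i+1 \in I_\et$; letting $x_m$ be the initial of the block of $\et$ containing $x_i$, one has $[x_m] \preceq [x_i]$ and $[x_m] \not\preceq [x_{i+1}]$, and combining these yields $[x_i] \not\preceq [x_{i+1}]$ and hence $[x_ix_{i+1}] \prec [x_i]$, so once again $\high(T_i) < \high(\et)$. The cyclic boundary comparison between $R_{\SD}(\et)$ and the $n+1$ term on the right-hand side is handled by the obvious cyclic analogue of this case analysis.

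I expect the main obstacle to be the last case above, where one must verify that $\high(T_i) < \high(\et)$ even though $T_i$ may still have $j$ left-blocks. This relies on observing that the sub-semilattice $L(T_i)$ is contained in $L(\et)$, so that heights in $L(T_i)$ are bounded above by those in $L(\et)$, combined with the strict drop of $\high_{L(\et)}$ along the chain $[x_ix_{i+1}] \prec [x_i]$ guaranteed by the height lemma preceding Proposition~\ref{p:modified-main}. Once this strict height inequality is in place, the remainder of the case analysis and the cyclic boundary match amount to a careful but routine bookkeeping exercise.
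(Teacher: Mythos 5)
Your proposal is correct and follows essentially the same route as the paper: the first claim is proved by the same merging/contrapositive argument (if neither $i$ nor $i+1$ is initial, the block containing $x_i$ still covers the merged entry, so position $i$ is not initial in $\face_i(\et)$), and \eqref{eq:sleep} by the same term-matching, with your height computations simply reproving the strict height drop that the paper obtains by citing Lemma~\ref{l:Donna}. The only small divergence is in the case $i\in I_\et$, $i+1\notin I_\et$, where you assert the stronger fact that $(i,i)\in\SD_j$ always holds (true: maximality of left-blocks rules out condition (b) of Lemma~\ref{l:when-pinching-reduces}, so your ``short direct check'' does go through), whereas the paper avoids this by allowing the alternative that $\face_i(\et)$ has fewer than $j$ left-blocks and absorbing the resulting term of $\lin\cF_n^{j-1}$ into $G_{n,j,h-1}$.
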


\begin{proof}
Let $1\leq i\leq n$. Write $s_i\face_i(\et)= \aa$, as defined in the proof of Lemma~\ref{l:Donna}. By assumption, $\aa$ has $j$ left-blocks and one of them starts in position $i$. If neither $i$ nor $i+1$ were initial in $\et$, then $x_i$ and $x_{i+1}$ would both lie in the same left-block of $\et$, whose initial point is some $k<i$; and so $a_k$ would also mark the start of a left-block in $\aa$ which contains $a_i=\lcu{x_ix_{i+1}}$. Since we originally assumed that $i\in I_\aa$, this yields a contradiction.

A similar argument, with slight adjustments to the notation, shows that if $n\in I_{\face_0(\et)}$ then either $n+1$ or $1$ must have been initial in $\et$.
This completes the proof of the first part of the lemma.

For the second part of the lemma, suppose that $i\in I_\et$ with $1\leq i \leq n$, and note that there are two possibilities. Either $\face_i(\et)$ has fewer than $j$ left-blocks, in which case $s_i\face_i(\et)\in\cF_n^{j-1}$\/; or else $\face_i(\et)$ has exactly $j$ left-blocks, in which case one of them must start in position~$i$, and so $(i,i)\in\SD_j$.
By the first part of the lemma, the only other $(k,k)\in\SD_j$ with $1\leq k \leq n$ must arise from having $k+1\in I_\et$; but then by Lemma~\ref{l:Donna}\ref{item:sd-lower}, such terms have height at most $h-1$.

It remains to deal with the case where $n+1\in I_\et$. If $\face_0(\et)$ has fewer than $j$ left-blocks then $R_{\SD}(\et)=0$; if it has exactly $j$ left-blocks, then $(n,0)\in\SD_j$ and so
\[ R_{\SD}(\et) = (-1)^n \subchain{2}{n-1}\tp \lcu{x_{n+1}x_1}\tp x_{n+1}x_1\,,\]
as required.
Equation~\eqref{eq:sleep} now follows.
\end{proof}

Summing up: all terms in \eqref{eq:ds1} have strictly lower height than~$\et$; the terms in \eqref{eq:ds2} each give $\et$, since $\lcu{x_i}x_i=x_i$ for all $i$; and Lemma~\ref{l:Noble} tells us the sum of terms in \eqref{eq:sd} with $R_{\SD}(\et)$, provided we work modulo terms of fewer left-blocks or lower height. Therefore, the right-hand side of \eqref{eq:interim} is equal to
\[ \sum_{k\in I_\et} \et + \sum_{k\in I_\et} (-1)^{k+1}\face_{k+1}s_k(\et) + \sum_{k\in I_\et} (-1)^k s_k\face_k(\et)
\quad\mod G_{n,j,h-1}, \]
provided that we interpret the case $n+1\in I_\et$ appropriately. Expanding this out gives exactly what is claimed in Proposition~\ref{p:modified-main}, and so completes the proof.\hfill$\Box$
\end{section}

\begin{section}{Tying things together}
The inductive calculations done in the previous sections give us the following result.

\begin{thm}\label{t:maintech}
Let $n\geq 1$, and let $\psi\in\ZC^n(\ell^1(S))$ be an $R$-normalized cyclic $n$-cocycle. Then $\psi$ is a cyclic coboundary.
\end{thm}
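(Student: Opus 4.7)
The plan is to combine the reductions developed in the preceding sections into a single finite induction on the filtration $\cF_n^1 \subset \cF_n^2 \subset \cdots \subset \cF_n^{n+1} = S^{n+1}$ of elementary tensors by number of left-blocks.

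First I would invoke Proposition~\ref{minimal in sight}: since $\psi$ is $R$-normalized, there exists a cyclic $(n-1)$-cochain $\chi_1$ such that $\psi_1 \defeq \psi - \dif\chi_1$ vanishes on every elementary tensor with a minimal element, i.e.\ $\psi_1$ vanishes on $\lin \cF_n^1$. Note that $\psi_1$ is still a cyclic cocycle (as $\dif\chi_1$ is) and is automatically still $R$-normalized (since $R$-normalization asks only about tensors entirely inside one rectangular component, and such tensors certainly have a minimal element).

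Next, I would apply Theorem~\ref{t:reduction} iteratively. Suppose inductively that for some $2 \leq j \leq n+1$ we have produced cyclic cochains $\chi_1,\dots,\chi_{j-1}$ such that $\psi_{j-1} \defeq \psi - \dif(\chi_1+\cdots+\chi_{j-1})$ vanishes on $\lin\cF_n^{j-1}$. Theorem~\ref{t:reduction} then supplies a cyclic cochain $\chi_j$ such that $\psi_j \defeq \psi_{j-1} - \dif\chi_j$ vanishes on $\lin\cF_n^j$. (Inspection of the construction shows $\chi_j$ is given by $\left[(P_j\dotsb P_1)^N\right]^*\psi_{j-1}$ composed with the insertion cochain, and is therefore a bounded cyclic cochain; cyclicity of the output is precisely Lemma~\ref{l:our-second-mending-is-cyclic}.) Setting $\chi \defeq \chi_1 + \chi_2 + \cdots + \chi_{n+1}$, after $n+1$ steps we arrive at a cyclic $n$-cocycle $\psi_{n+1} = \psi - \dif\chi$ that vanishes on $\lin\cF_n^{n+1} = \lin S^{n+1}$.

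Finally, since $\lin S^{n+1}$ is norm-dense in $\ell^1(S)^{\ptp n+1} \cong \ell^1(S^{n+1})$ and $\psi_{n+1}$ is a bounded $(n+1)$-linear functional, $\psi_{n+1} = 0$. Hence $\psi = \dif\chi$, where $\chi \in \CC^{n-1}(\ell^1(S))$ is cyclic by construction, so $\psi \in \BC^n(\ell^1(S))$ as claimed. The main substantive obstacles — propagating the vanishing condition one filtration level at a time while preserving cyclicity, and ensuring the process terminates in finitely many explicit steps — have already been discharged by Proposition~\ref{minimal in sight} and Theorem~\ref{t:reduction}; the only point to emphasise here is that the filtration has finite depth $n+1$, so the iteration halts and no convergence issues arise.
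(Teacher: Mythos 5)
Your proposal is correct and is essentially the argument the paper intends: the paper's own "proof" of Theorem~\ref{t:maintech} is just the remark that it follows from the preceding inductive calculations, namely Proposition~\ref{minimal in sight} as the base case ($\cF_n^1$) and Theorem~\ref{t:reduction} applied finitely many times up the filtration $\cF_n^1\subset\cdots\subset\cF_n^{n+1}=S^{n+1}$, followed by density of $\lin S^{n+1}$ in $\ell^1(S^{n+1})$. You have simply written out this finite induction explicitly, so there is nothing to correct.
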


Combining this with Proposition~\ref{p:initialization}, we finally obtain our main result.
\begin{thm}\label{t:headline}
The cyclic cohomology of $\ell^1(S)$ is zero in all odd degrees; whereas in even degrees, it is the space $\{ [\tau^{(2n)}] \;:\; \tau\in \SZ{0}{\ell^1(S)}\}$.
\end{thm}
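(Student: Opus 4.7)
The plan is to assemble the two main results already proved: Proposition~\ref{p:initialization} reduces a general cyclic cocycle (modulo coboundaries and, in even degrees, a cocycle of the form $\tau^{(2n)}$) to an $R$-normalized cocycle, while Theorem~\ref{t:maintech} asserts that every $R$-normalized cyclic cocycle is itself a coboundary. Chaining them together should give the theorem almost immediately.

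First I would dispose of the degenerate case $n=0$: by definition $\CC^0(\ell^1(S)) = \SC{0}{\ell^1(S)} = \ell^1(S)'$, the cyclic condition is vacuous, and $\BC^0(\ell^1(S)) = 0$; a direct check on the coboundary formula shows that $\ZC^0(\ell^1(S)) = \SZ{0}{\ell^1(S)}$ is precisely the space of continuous traces. Since $\tau^{(0)} = \tau$ by convention, the stated description of $\HC^0(\ell^1(S))$ holds trivially.

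For odd degree $2n-1$ with $n \geq 1$, given $\psi \in \ZC^{2n-1}(\ell^1(S))$ I would first invoke Proposition~\ref{p:initialization}(i) to produce a cyclic cochain $\chi_0$ such that $\psi - \dif\chi_0$ is $R$-normalized, and then invoke Theorem~\ref{t:maintech} to produce a cyclic cochain $\chi_1$ with $\psi - \dif\chi_0 = \dif\chi_1$. Hence $\psi = \dif(\chi_0+\chi_1) \in \BC^{2n-1}(\ell^1(S))$, so $\HC^{2n-1}(\ell^1(S)) = 0$. For even degree $2n$ with $n \geq 1$, Proposition~\ref{p:initialization}(ii) similarly produces a trace $\tau \in \SZ{0}{\ell^1(S)}$ and a cyclic cochain $\chi_0$ making $\psi - \tau^{(2n)} - \dif\chi_0$ an $R$-normalized cocycle, which is then a coboundary $\dif\chi_1$ by Theorem~\ref{t:maintech}. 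Thus $[\psi] = [\tau^{(2n)}]$ in $\HC^{2n}(\ell^1(S))$; and conversely each $\tau^{(2n)}$ is a cyclic cocycle (the lemma following the definition of $\psi^{(n)}$), so the cohomology group equals $\{[\tau^{(2n)}] : \tau \in \SZ{0}{\ell^1(S)}\}$ as claimed.

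There is no genuine new obstacle in this final assembly: all the hard work lies upstream, in Theorem~\ref{t:maintech} and the rather intricate proof of Proposition~\ref{p:modified-main}. The only mild care required here is to split the argument by parity of the degree and to treat the base case $n=0$ by hand, since Proposition~\ref{p:initialization} is stated only for $n \geq 1$. Note also that the theorem as stated does not claim that the map $\tau \mapsto [\tau^{(2n)}]$ is injective, only that its image exhausts $\HC^{2n}(\ell^1(S))$, so no further work on that point is needed.
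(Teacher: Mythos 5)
Your proposal is correct and follows essentially the same route as the paper, which obtains Theorem~\ref{t:headline} precisely by combining Proposition~\ref{p:initialization} with Theorem~\ref{t:maintech}; the extra details you supply (the parity split, noting that the $R$-normalized difference is still a cyclic cocycle, and the trivial degree-zero case where $\HC^0$ is just the space of continuous traces) are exactly the routine checks the paper leaves implicit.
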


As promised earlier, we can use Theorem~\ref{t:headline} to determine the simplicial cohomology of $\ell^1(S)$, via the Connes-Tzygan long exact sequence.
This requires one last fact about how the cohomology classes $[\tau^{(2n)}]$ transform under the shift map~$S$.

\begin{lem}\label{l:shift-of-traces}
Let $n\geq 1$\/. There exists a non-zero constant $\lambda_n$ such that,
for any Banach algebra~$A$ and $\tau\in \SZ{0}{A}$, the shift map $S:\HC^{2n-2}(A)\to \HC^{2n}(A)$ satisfies $S(\tau^{(2n-2)})= \lambda_n\tau^{(2n)}$.
\end{lem}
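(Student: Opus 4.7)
The proof is essentially a universal calculation, which I would approach in three steps.

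First, fix a chain-level representative of the shift operator $S$, such as the one coming from the standard SBI construction described in~\cite{He1}. For any Banach algebra~$A$, this realises $S[\phi]$ as a specific universal signed combination of cochains of the form $(a_1,\dots,a_{2n+1}) \mapsto \phi(\dots, a_ia_{i+1},\dots, a_ja_{j+1},\dots)$. Crucially, the formula depends only on the multiplication of~$A$, and its numerical coefficients are universal signed integers independent of both~$A$ and~$\phi$.

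Second, substitute $\phi = \tau^{(2n-2)}$ into this formula. Because $\tau^{(2n-2)}$ evaluated on any $(2n-1)$-tuple depends only on the product of its entries (namely, $\tau$ applied to that product), each summand appearing in $(S\tau^{(2n-2)})(a_1,\dots,a_{2n+1})$ collapses to $\pm \tau(a_1\cdots a_{2n+1}) = \pm \tau^{(2n)}(a_1,\dots,a_{2n})(a_{2n+1})$. Summing the signed coefficients yields a universal integer $\lambda_n$, independent of~$A$ and~$\tau$, such that $S\tau^{(2n-2)} = \lambda_n\,\tau^{(2n)}$ already at the cochain level, and in particular in $\HC^{2n}(A)$.

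Third, to verify $\lambda_n \neq 0$, specialise to $A = \Cplx$ with $\tau$ the identity functional. The cocycle $\tau^{(2n)}(z_1,\dots,z_{2n})(z_{2n+1}) = z_1\cdots z_{2n+1}$ represents a non-zero class in the one-dimensional space $\HC^{2n}(\Cplx)$ (a classical computation; cf.~\cite{He1}), and on $\Cplx$ the shift operator is known to be an isomorphism $\HC^{2n-2}(\Cplx) \to \HC^{2n}(\Cplx)$ (Connes' periodicity). Hence $\lambda_n \neq 0$.

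The principal obstacle is the sign bookkeeping in the second step: depending on which chain-level formula one adopts for $S$, the combinatorics can be delicate. However, the explicit value of $\lambda_n$ is unimportant for the present application -- one only needs that $S\tau^{(2n-2)}$ is a scalar multiple of $\tau^{(2n)}$, which is automatic from the structure of the universal formula, together with non-vanishing of this scalar, which follows from the universal example in the third step. Thus one can in practice avoid computing $\lambda_n$ explicitly.
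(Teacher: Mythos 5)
Your argument is correct in outline, and it is worth noting that the paper itself does not prove this lemma: it is explicitly deferred to folklore and to the direct proof in~\cite{YC_CTconst}, with the remark that the value of $\lambda_n$ depends on the chosen normalization of $S$. Your route -- take a universal cochain-level representative of $S$, observe that every summand applied to $\tau^{(2n-2)}$ collapses to $\pm\tau(a_1\cdots a_{2n+1})$, and then get $\lambda_n\neq 0$ by naturality and the known computation $\HC^{2k}(\Cplx)\cong\Cplx$ with $S$ an isomorphism there -- is essentially the folklore argument, and the specialization to $\Cplx$ is a neat way to avoid computing the constant, whereas \cite{YC_CTconst} instead fixes formulas so that $\lambda_n=1$ (and obtains norm control, which the present paper also needs elsewhere but not for this lemma). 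The one step you assert rather than establish is the first: in the Banach setting of~\cite{He1}, $S$ arises as a connecting map built from choices of liftings, and extracting from it a natural cochain-level representative of the ``multiply adjacent (possibly triple) pairs'' shape is precisely the nontrivial input; this is what \cite{YC_CTconst} supplies, or one can invoke Connes' classical explicit periodicity formula and check it induces the same map up to a nonzero universal scalar, which is all the statement requires. Two minor points to watch: if the chosen representative involves cyclic shifts or wrap-around products, the collapse of each summand to $\pm\tau(a_1\cdots a_{2n+1})$ uses the trace property of $\tau$ (harmless, but should be said); and the coefficients need not be integers under every normalization -- only their universality and the non-vanishing of their sum matter, exactly as you use them.
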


This lemma seems to be folklore, to an extent: for a direct proof that does not rely on \cite{He1}, see~\cite{YC_CTconst}. The value of $\lambda_n$ depends on a choice of scalar normalization of $S$ when one constructs the Connes-Tzygan exact sequence. In \cite{YC_CTconst} the formulas are chosen so that $\lambda_n=1$ for all $n$\/; but if one uses the formulas of~\cite{He1}, then different scaling factors will appear.

\begin{thm}\label{t:headline2}
The simplicial cohomology of $\ell^1(S)$ is zero in degrees $1$ and above.
\end{thm}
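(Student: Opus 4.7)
The plan is to feed the cohomology computation of Theorem~\ref{t:headline} into the Connes-Tzygan long exact sequence~\eqref{eq:CTseq}, which is available for $\ell^1(S)$ thanks to Theorem~\ref{t:we-have-hunital}. Writing $\Phi_k\colon\SZ{0}{\ell^1(S)}\to\HC^{2k}(\ell^1(S))$ for the linear map $\tau\mapsto[\tau^{(2k)}]$, the inputs are: $\HC^{2k-1}(\ell^1(S))=0$ and each $\Phi_k$ is surjective (Theorem~\ref{t:headline}); and $S\Phi_k=\lambda_{k+1}\Phi_{k+1}$ with $\lambda_{k+1}\neq 0$ (Lemma~\ref{l:shift-of-traces}).

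First I would extract from~\eqref{eq:CTseq} the short exact sequence
\[ 0\to \HC^m(\ell^1(S))/\ran(S)\to \HH^m(\ell^1(S))\to \ker(S)\to 0, \]
valid for all $m\geq 1$, where the $S$ on the left is $\HC^{m-2}\to\HC^m$ and the $S$ on the right is $\HC^{m-1}\to\HC^{m+1}$; this follows because the vanishing of the odd-degree cyclic cohomology kills two of the four neighbouring $\HC$-groups in each fragment of~\eqref{eq:CTseq}. Consequently the whole theorem reduces to showing that $S\colon\HC^{2k}(\ell^1(S))\to\HC^{2k+2}(\ell^1(S))$ is bijective for every $k\geq 0$. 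Since $S\Phi_k=\lambda_{k+1}\Phi_{k+1}$ with $\lambda_{k+1}\neq 0$, this is equivalent to bijectivity of $\Phi_k$ for every $k\geq 1$ (the case $k=0$ is tautological, as $\Phi_0$ is the identity on the space of traces).

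The remaining ingredient is thus injectivity of $\Phi_k$, which I would establish by specialization. Given $\tau^{(2k)}=\dif\chi$ with $\chi$ a cyclic $(2k-1)$-cochain, and given any point mass $x\in S$, I evaluate both sides at $(x,\dots,x)\in S^{2k+1}$. Idempotency of $x$ collapses every product occurring in the Hochschild coboundary formula back to $x$, so the left side is $\tau(x)$, while each of the $2k+1$ summands in $\dif\chi$ reduces to $\chi(x,\dots,x)$; the signed sum of these is $1+\sum_{j=1}^{2k-1}(-1)^j+1=1$, giving $\tau(x)=\chi(x,\dots,x)$. But the cyclicity relation $\cyc\chi=\chi$ carries the sign $(-1)^{2k-1}=-1$, which on the diagonal tensor $x\tp\cdots\tp x$ forces $\chi(x,\dots,x)=-\chi(x,\dots,x)=0$. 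Hence $\tau$ vanishes on every point mass, and by density $\tau=0$ on $\ell^1(S)$. With the difficult work of computing $\HC^*(\ell^1(S))$ already behind us in Theorem~\ref{t:headline}, this elementary ``cyclicity-at-an-idempotent'' observation is all that is needed to close the deduction; the main obstacle is really not in this proof but in the extensive machinery developed in Sections~3--6 to obtain Theorem~\ref{t:headline}.
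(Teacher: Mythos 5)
Your proposal is correct and follows essentially the same route as the paper: existence of the Connes--Tzygan sequence via Theorem~\ref{t:we-have-hunital}, the odd-degree vanishing and trace description from Theorem~\ref{t:headline} plus Lemma~\ref{l:shift-of-traces} to get surjectivity of $S$, and injectivity via evaluating the putative cobounding cyclic cochain on the diagonal idempotent tensor, where the sign $(-1)^{2k-1}=-1$ forces it to vanish. The only difference is presentational (you phrase the deduction through bijectivity of the maps $\tau\mapsto[\tau^{(2k)}]$ and a short exact sequence, where the paper argues directly that $S$ is surjective and injective), so no substantive comparison is needed.
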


\begin{proof}
By Theorem~\ref{t:we-have-hunital} and \cite[Theorem~11]{He1}, the Connes-Tzygan sequence for $\ell^1(S)$ exists.
Then, since the cyclic cohomology of $\ell^1(S)$ vanishes in all odd degrees, the long exact sequence breaks up to give exact sequences
\[ 0\to \HH^{2n-1}(\ell^1(S)) \xrightarrow{B} \HC^{2n-2}(\ell^1(S)) \xrightarrow{S} \HC^{2n}(\ell^1(S)) \to \HH^{2n}(\ell^1(S)) \to 0 \]
for all $n\geq 1$\/.
Moreover, the shift map is surjective: for by Theorem~\ref{t:headline}, every cyclic $2n$-cocycle is cohomologous to one of the form $\tau^{(2n)}$ for some trace $\tau$, and by Lemma~\ref{l:shift-of-traces} we have $\tau^{(2n)}=\lambda_n^{-1}S\tau^{(2n-2)}$\/. Thus $\HH^{2n}(\ell^1(S))=0$ for all $n\geq 1$\/.

To finish, it suffices to show that the shift map is injective (which will imply that $B$ is the zero map, and hence that $\HH^{2n-1}(\ell^1(S))=0$\/).
As already observed in this proof, $\HC^{2n-2}$ is generated by cohomology classes of the form $[\tau^{(2n-2)}]$ where $\tau\in\SZ{0}{\cA}$. Consider $S([\tau^{(2n-2)}])=\lambda_n [\tau^{(2n)}]$ and suppose that $\tau^{(2n)}=\dif\varphi$ for some $\varphi\in\CC^{2n-1}(A)$\/. For each idempotent $e\in \cA$\/, direct calculation gives
$\tau(e)=\dif\varphi(e,\dots, e)(e) = \varphi(e,\ldots, e)(e)$.
But since $\varphi$ is \emph{cyclic}, $\varphi(e,\ldots,e)(e)= - \varphi(e,\ldots,e)(e)=0$\/. Thus $\tau$ vanishes on each idempotent in $\cA$, and since $\cA=\ell^1(S)$ where $S$ is a band, continuity forces $\tau$ to vanish identically.
\end{proof}

\end{section}

\begin{section}{Conclusion}

We have had to work quite hard to establish that the cyclic and simplicial cohomology of a band $\ell^1$-semigroup algebra behave as one would hope. Our methods would simplify in the case where the band is a semilattice, and in that case they would give an alternative approach to the main result of~\cite{Ch1}. Note that in~\cite{Ch1}, the author was unable to obtain explicit formulas for cobounding a given cocycle in high degrees, since the contracting homotopy in that setting was only given recursively. Here, we have an explicit algorithm for cobounding a given \emph{cyclic} cocycle; but once again we do not have a reasonable formula for cobounding arbitrary cocycles in high degrees, even for the case of a semilattice, owing to the reliance on the Connes-Tzygan exact sequence.

We feel that the tactics used in establishing the main result may be of wider interest, when interpreted in a broader sense. A general picture seems to be emerging: in order to obtain vanishing results for simplicial cohomology of Banach algebras, unless the geometry of the underlying Banach spaces intervenes helpfully, one has to replace the exhaustion arguments that are commonly found in `purely algebraic' cohomology of algebras, with more careful approaches: and these new arguments seem to depend on the \emph{local relations} between entries of a given elementary tensor in the Hochschild chain complex, rather than on how such entries factorize in terms of \emph{global generators} for the algebra.

\subsection*{Acknowledgments}
The first and second authors thank the School of Mathematics and Statistics at Newcastle University,
for its hospitality during several visits while this paper was being written. The work of the second author was supported in parts by a research grant from the Canadian NSERC.
\end{section}


\vfill

\begin{tabular}{l@{\hspace{12mm}}l}
Y. Choi and F. Gourdeau & M. C. White \\
D\'epartement de math\'ematiques  &
        School of Mathematics \\
\text{\hspace{1.0em}} et de statistique, &
\text{\hspace{1.0em}} and Statistics, \\
Pavillon Alexandre-Vachon &
        Herschel Building  \\
Universit\'e Laval &
          Newcastle University \\
Qu\'ebec, QC &
         Newcastle upon Tyne, Tyne \& Wear \\
Canada, G1V 0A6 &
         United Kingdom, NE1 7RU \\
        & \\
{\bf Email: \tt y.choi.97@cantab.net} &
        {\bf Email: \tt Michael.White@ncl.ac.uk} \\
{\bf Email: \tt Frederic.Gourdeau@mat.ulaval.ca} &
\end{tabular}

\end{document}